\numberwithin{equation}{section}
\newtheorem*{MainTheorem}{Main Theorem}
\newtheorem{Theorem}{Theorem}[section]
\newtheorem{Corollary}[Theorem]{Corollary}
\newtheorem{Lemma}[Theorem]{Lemma}
\newtheorem{Proposition}[Theorem]{Proposition}
 { \theoremstyle{definition}
\newtheorem{Definition}[Theorem]{Definition}
\newtheorem{Remark}[Theorem]{Remark} }
\begin{document}

\allowdisplaybreaks

\newcommand{\arXivNumber}{1809.02951}

\renewcommand{\PaperNumber}{038}

\FirstPageHeading

\ShortArticleName{The Laurent Extension of Quantum Plane: a Complete List of $U_q(\mathfrak{sl}_2)$-Symmetries}

\ArticleName{The Laurent Extension of Quantum Plane:\\ a Complete List of $\boldsymbol{U_q(\mathfrak{sl}_2)}$-Symmetries}

\Author{Sergey SINEL'SHCHIKOV}

\AuthorNameForHeading{S.~Sinel'shchikov}

\Address{Mathematics Division, B.~Verkin Institute for Low Temperature Physics and Engineering\\ of the National Academy of Sciences of Ukraine, 47 Nauky Ave., 61103 Kharkiv, Ukraine}
\Email{\href{mailto:sinelshchikov@ilt.kharkov.ua}{sinelshchikov@ilt.kharkov.ua}}
\URLaddress{\url{http://ilt.kharkov.ua/bvi/structure/depart_e/d25/d25e.htm}}

\ArticleDates{Received September 11, 2018, in final form April 17, 2019; Published online May 09, 2019}

\Abstract{This work finishes a classification of $U_q(\mathfrak{sl}_2)$-symmetries on the Laurent extension $\mathbb{C}_q\big[x^{\pm 1},y^{\pm 1}\big]$ of the quantum plane. After reproducing the partial results of a previous paper of the author related to symmetries with non-trivial action of the Cartan generator(s) of $U_q(\mathfrak{sl}_2)$ and the generic symmetries, a complete collection of non-generic symmetries is presented. Together, these collections constitute a complete list of $U_q(\mathfrak{sl}_2)$-symmetries on $\mathbb{C}_q\big[x^{\pm 1},y^{\pm 1}\big]$.}

\Keywords{quantum universal enveloping algebra; Hopf algebra; Laurent polynomial; quantum symmetry; weight}

\Classification{81R50; 17B37}

\section{Introduction}

Quantum algebras are normally considered together with a distinguished collection (`quantum group') of their symmetries that form a sort of quantum dynamical system (a precise definition is given below). A good example in this context is the quantum plane $\mathbb{C}_q[x,y]$, which is among the simplest quantum algebras. It is very well known that $\mathbb{C}_q[x,y]$ carries a $U_q(\mathfrak{sl}_2)$-symmetry (in other terms, a structure of $U_q(\mathfrak{sl}_2)$-module algebra, see, e.g.,~\cite{C}). During a long time it was the only such symmetry considered in the literature.

The initial approach to considering different symmetries has been developed in the paper by S.~Duplij and S.~Sinel'shchikov~\cite{DS}. The authors produced a complete list of $U_q(\mathfrak{sl}_2)$-symmetries on $\mathbb{C}_q[x,y]$ and, in particular, demonstrated the existence of an uncountable collection of pairwise non-isomorphic such symmetries.

The next and very natural step has been done in a work by S.~Duplij, Y.~Hong, and F.~Li~\cite{DHL}, where the structures of $U_q(\mathfrak{sl}_m)$-module algebra on a generalized quantum plane, a polynomial algebra in $n$
quasi-commuting variables, $m,n>2$, are considered. The authors apply, in the above broader context, the methods of~\cite{DS}, together with more enhanced tools related to (ge\-ne\-ralized) Dynkin diagrams.

Another approach to generalizing the results of \cite{DS} and different from increasing the dimension parameters as in \cite{DHL}, has been developed in \cite{S}. The basic motive was in observing that a complete list of symmetries on $\mathbb{C}_q[x,y]$ (as well as in \cite{DHL} within fixed dimension(s)) is formed by only finitely many series labelled by pairs of weight constants; the latter constitute invariants of isomorphism of symmetries. This hints that the resulting amount of symmetries is rather low, and one may try to embed everything into a more symmetric algebra, where restricting to the initial subalgebra $\mathbb{C}_q[x,y]$ breaks the symmetry very essentially. Fortunately, the required embedding is readily at a hand: the embedding to the algebra $\mathbb{C}_q\big[x^{\pm 1},y^{\pm 1}\big]$ of Laurent polynomials on the quantum plane. This, together with a bit surprising (but in fact completely straightforward) observation that every $U_q(\mathfrak{sl}_2)$-symmetry of the subalgebra $\mathbb{C}_q[x,y]$ admits an extension to a symmetry of the larger algebra $\mathbb{C}_q\big[x^{\pm 1},y^{\pm 1}\big]$, allows one to claim that, in view of the results presented below, $\mathbb{C}_q\big[x^{\pm 1},y^{\pm 1}\big]$ is much more symmetric than its subalgebra $\mathbb{C}_q[x,y]$. Thus, studying the symmetries of $\mathbb{C}_q\big[x^{\pm 1},y^{\pm 1}\big]$ appears to be significant because it demonstrates where the symmetries of the standard quantum plane $\mathbb{C}_q[x,y]$ come from. The extended algebra, while retaining the polynomial nature (all the sums are finite), removes the effect of presence of the `lowest homogeneous component', the principal obstacle for existence of a large collection of symmetries.

The purpose of this work is to extend the list of $U_q(\mathfrak{sl}_2)$-symmetries on $\mathbb{C}_q\big[x^{\pm 1},y^{\pm 1}\big]$ presented in \cite{S} up to a complete list of symmetries. One might expect that a huge amount of additional symmetries as compared to those coming from the subalgebra $\mathbb{C}_q[x,y]$, should be a consequence of a larger group of automorphisms for $\mathbb{C}_q\big[x^{\pm 1},y^{\pm 1}\big]$ than that of $\mathbb{C}_q[x,y]$. However, it was demonstrated in \cite{S} that the collection of symmetries in which the action of Cartan generator of~$U_q(\mathfrak{sl}_2)$ (which anyway acts by an automorphism) does not reduce to multiplying the generators of quantum plane by (weight) constants, is rather poor. On the other hand, a collection of generic symmetries~\cite{S} covers infinitely (in fact, uncountably) many admissible pairs of weight constants, whence an uncountable family of non-isomorphic symmetries; this collection constitutes a single series and is disjoint from the symmetries extended from $\mathbb{C}_q[x,y]$. Another part of the complete collection, which is presented in this work, the non-generic symmetries, although covering only a countable family of weight constants, is much more extended in writing down the specific series submitted to the parameters~$D$,~$G$, and~$L$ (see Section~\ref{wpem} for their definition); this collection is presented below.

Let us describe the general classification of $U_q(\mathfrak{sl}_2)$-symmetries on $\mathbb{C}_q\big[x^{\pm 1},y^{\pm 1}\big]$, which was partially introduced in \cite{S}. The entire collection of symmetries splits into $4$ disjoint subcollections as follows:
\begin{enumerate}\itemsep=0pt
\item[I] The subcollection with the Cartan generator $\mathsf{k}$ of $U_q(\mathfrak{sl}_2)$ acting in a non-trivial way (its action does not reduce to multiplying the generators $x$, $y$ of quantum plane by constants). This subcollection is rather poor and is described by Theorem~\ref{sigma=-I}~\cite[Theorem 3.5]{S}.

\item[II] (Trivial collection with weight constants.) This subcollection is contained in the rest of symmetries which are contrary to~(I) in the sense that the Cartan generator $\mathsf{k}$ multiplies the generators $x$, $y$ of quantum plane by weight constants, and constitutes a trivial part of the complement to type~(I) symmetries. This part is formed by just $4$ symmetries in which the weight constants are $\pm 1$ and the generators $\mathsf{e}$ and $\mathsf{f}$ of $U_q(\mathfrak{sl}_2)$ act as identically zero operators. See Theorem~\ref{T01}.

\item[III] (Generic symmetries.) A part of symmetries in which the Cartan generator $\mathsf{k}$ multiplies $x$, $y$ by weight constants but, on the contrary to (II), either $\mathsf{e}$ or $\mathsf{f}$ acts as a non-zero operator. This part (subcollection) of symmetries, referred to as generic symmetries, is distinguished by a special `rational independence' assumption on the weight constants. It constitutes the most massive collection of symmetries in the sense that the associated pairs of weight constants cover all but a countable subset of possible values for such pairs. See Theorem~\ref{gener} \cite[Theorem~4.1]{S}.

\item[IV] (Non-generic symmetries.) The rest of symmetries with either $\mathsf{e}$ or $\mathsf{f}$ acting as a non-zero operator and whose weight constants break the `rational independence' assumption mentioned in (III), are referred to as non-generic symmetries and constitute the principal subjects of this work. See Main Theorem formulated below, along with the related terminology and definitions in Sections~\ref{wpem} and~\ref{clngs}.
\end{enumerate}

The above classification is not intended to describe the isomorphism classes of $U_q(\mathfrak{sl}_2)$-symmetries on $\mathbb{C}_q\big[x^{\pm 1},y^{\pm 1}\big]$; it is much coarser. Instead, it focuses on the structure of symmetries which allows one to write down suitable general formulas for the symmetries in each of types (I)--(III) as well as within the specific series in type~(IV) listed in Main Theorem. Both isomorphism and non-isomorphism statements are formulated where they are easily available. In particular, it is very well obvious to observe that any two symmetries of different types (I)--(IV) are non-isomorphic. Another point worth mentioning is Proposition \ref{ufnis} where the existence of uncountable family of non-isomorphic generic (type (III)) symmetries is established.

The following Theorem presents the list of series for non-generic (type (IV)) symmetries. The names of series in the list reflect the associated values for the invariants $D$ and $G$ mentioned above, together with the number of terms in the formulas involved. More details are to be found in Sections~\ref{wpem} and~\ref{clngs}.

\begin{MainTheorem} The collection of non-generic $U_q(\mathfrak{sl}_2)$-symmetries on $\mathbb{C}_q\big[x^{\pm 1},y^{\pm 1}\big]$ is given by the series
\begin{alignat*}{7}
&D_1G_1E_1F_3, \ \ && D_1G_1E_2F_4, \  \ && D_1G_1E_3F_3, \  \ && D_1G_1E_2F_2,\ \ && D_1G_1E_4F_2, \ \ &&D_1G_1E_3F_1,&\\
&D_2G_1E_1F_3(a),\  &&D_2G_1E_2F_2(a),\  &&D_2G_1E_3F_1(a),\  && && && &\\
&D_2G_1E_1F_3(b), &&D_2G_1E_2F_2(b), &&D_2G_1E_3F_1(b), && && && & \\
 &D_2G_2E_1F_3, &&D_2G_2E_2F_2, &&D_2G_2E_3F_1,&& && && & \\
 &D_4G_1E_1F_2(a), &&D_4G_1E_2F_1(a), &&&&&&& &&  \\
 &D_4G_1E_1F_2(b), &&D_4G_1E_2F_1(b), &&&&&&&&& \\
 &D_4G_2E_1F_2(a), \ \ &&D_4G_2E_2F_1(a), \ \ &&D_4G_2E_1F_2(b),\ \  &&D_4G_2E_2F_1(b), \ \  && &
\end{alignat*}
described in Section~{\rm \ref{clngs}}. Any two symmetries contained in the series from different lines of this table are non-isomorphic.

This collection, together with the symmetries described in Section~{\rm \ref{nw}} by Theorems~{\rm \ref{sigma=-I}},~{\rm \ref{T01}},~{\rm \ref{gener}} $($types {\rm (I)--(III))}, form a complete list of $U_q(\mathfrak{sl}_2)$-symmetries on $\mathbb{C}_q\big[x^{\pm 1},y^{\pm 1}\big]$.
\end{MainTheorem}

The outline of this paper is as follows. Section \ref{prel} (preliminaries) collects the basic definitions and facts related to our subjects. Section \ref{nw} reproduces (the formulations of) the results of \cite{S} concerning the (type (I)) symmetries with non-trivial action of the Cartan generator, type (II) (trivial) symmetries, and type~(III) (generic) symmetries. Section~\ref{wpem} introduces and studies the notion of extreme monomials for the weight polynomials related to symmetries. Certain integral parameters of symmetries are considered; among those, the parameters~$D$ and~$G$ are proved to be invariants of isomorphism of symmetries; these are to be used to label the series of non-generic symmetries. Admissible values for $D$ and $G$ are clarified, along with a~general form of a~symmetry up to complex coefficients and weight constants, to be computed later on.

Section \ref{clngs} presents the final form of the series of non-generic (type (IV)) symmetries, preceded by clarifying the associated pairs of weight constants.

\section{Preliminaries}\label{prel}

We start with recalling the general definition as follows. Let $H$ be a Hopf algebra whose comultiplication is $\Delta$, counit is $\varepsilon$, and antipode is $S$ \cite{abe}. Consider also a unital algebra $A$ whose unit is $\mathbf{1}$. The Sweedler sigma-notation related to the comultiplication $\Delta(h)=\sum\limits_{(h)}h_{(1)}\otimes h_{(2)}$ as in \cite{sweedler} is used below.

\begin{Definition}\label{symdef}
By a structure of $H$-module algebra on $A$ (to be referred to as an $H$-symmetry for the sake of brevity, or even merely a symmetry if $H$ is completely determined by the context) we mean a homomorphism of algebras $\pi\colon H\to\operatorname{End}_\mathbb{C}A$ such that
\begin{enumerate}\itemsep=0pt\samepage
\item[(i)] $\pi(h)(ab)=\sum\limits_{(h)}\pi(h_{(1)})(a)\cdot\pi(h_{(2)})(b)$ for all $h\in H$, $a,b\in A$;
\item[(ii)] $\pi(h)(\mathbf{1})=\varepsilon(h)\mathbf{1}$ for all $h\in H$.
\end{enumerate}
The structures $\pi_1$, $\pi_2$ are said to be isomorphic if there exists an automorphism $\Psi$ of the algebra~$A$ such that $\Psi\pi_1(h)\Psi^{-1}=\pi_2(h)$ for all $h\in H$.
\end{Definition}

Throughout the paper we assume that $q\in\mathbb{C}{\setminus}\{0\}$ is not a root of $1$ ($q^n\ne 1$ for all non-zero integers $n$). Consider the quantum plane which is a unital algebra $\mathbb{C}_q[x,y]$ with two genera\-tors~$x$,~$y$ and a~single relation
\begin{gather}\label{qpr}
yx=qxy.
\end{gather}

Let us complete the list of generators with two more elements $x^{-1}$, $y^{-1}$, and the list of relations with
\begin{gather}\label{Lpr}
xx^{-1}=x^{-1}x=yy^{-1}=y^{-1}y=\mathbf{1}.
\end{gather}
The extended unital algebra $\mathbb{C}_q\big[x^{\pm 1},y^{\pm 1}\big]$ defined this way is called the Laurent extension of the quantum plane (more precisely, the algebra of Laurent polynomials over the quantum plane).

Given an integral matrix $\sigma=\left(\begin{smallmatrix}k & m\\ l & n\end{smallmatrix}\right)\in {\rm SL}(2,\mathbb{Z})$ and a pair of non-zero complex numbers $(\mu,\nu)\in(\mathbb{C}{\setminus}\{0\})^2$, we associate an automorphism $\varphi_{\sigma,\mu,\nu}$ of $\mathbb{C}_q\big[x^{\pm 1},y^{\pm 1}\big]$ determined on the generators~$x$ and~$y$ by
\begin{gather}\label{Auto}
\varphi_{\sigma,\mu,\nu}(x)=\mu x^ky^m, \qquad \varphi_{\sigma,\mu,\nu}(y)=\nu x^ly^n.
\end{gather}
A well-known result claims that every automorphism of $\mathbb{C}_q\big[x^{\pm 1},y^{\pm 1}\big]$ has the form \eqref{Auto}, and the group $\operatorname{Aut}\big(\mathbb{C}_q\big[x^{\pm 1},y^{\pm 1}\big]\big)$ of automorphisms of $\mathbb{C}_q\big[x^{\pm 1},y^{\pm 1}\big]$ is just the semidirect product of its subgroups ${\rm SL}(2,\mathbb{Z})$ and $(\mathbb{C}{\setminus}\{0\})^2$ determined by setting~\cite{KPS}
\begin{gather*}
\sigma(\mu,\nu)\sigma^{-1}=(\mu,\nu)^\sigma
\stackrel{\operatorname{def}}{=}\big(\mu^k\nu^m,\mu^l\nu^n\big).
\end{gather*}
(see also \cite{AD,PLCCN}). We also rephrase this via introducing the associated action of ${\rm SL}(2,\mathbb{Z})$ by group automorphisms of $(\mathbb{C}{\setminus}\{0\})^2$
\begin{gather}\label{sl_ac}
\begin{pmatrix}k & m\\ l & n\end{pmatrix}\binom{\mu}{\nu}=\binom{\mu^k\nu^m}{\mu^l\nu^n}.
\end{gather}

The quantum universal enveloping algebra $U_q(\mathfrak{sl}_2)$ is a unital associative algebra defined by its (Chevalley) generators $\mathsf{k}$, $\mathsf{k}^{-1}$, $\mathsf{e}$, $\mathsf{f}$, and the relations
\begin{gather}
\mathsf{k}^{-1}\mathsf{k} =\mathbf{1},\qquad\mathsf{kk}^{-1}=\mathbf{1},\notag\\
\mathsf{ke} =q^2\mathsf{ek},\label{ke}\\
\mathsf{kf} =q^{-2}\mathsf{fk},\label{kf}\\
\mathsf{ef}-\mathsf{fe} =\frac{\mathsf{k}-\mathsf{k}^{-1}}{q-q^{-1}}. \label{effe}
\end{gather}

The standard Hopf algebra structure on $U_q(\mathfrak{sl}_2)$ is determined by the comultiplication $\Delta$, the counit $\boldsymbol\varepsilon$, and the antipode $\mathsf{S}$ as follows
\begin{alignat}{4}
&\Delta(\mathsf{k})=\mathsf{k}\otimes\mathsf{k}, && && & \label{k0}\\
& \Delta(\mathsf{e}) =\mathbf{1}\otimes\mathsf{e}+ \mathsf{e}\otimes\mathsf{k}, && && & \label{def}\\
& \Delta(\mathsf{f}) =\mathsf{f}\otimes\mathbf{1}+\mathsf{k}^{-1}\otimes\mathsf{f},\qquad && && & \label{def1}\\
& \mathsf{S}(\mathsf{k}) =\mathsf{k}^{-1}, && \mathsf{S}(\mathsf{e}) =-\mathsf{ek}^{-1}, \qquad && \mathsf{S}(\mathsf{f}) =-\mathsf{kf}, &\notag\\
& \boldsymbol{\varepsilon}(\mathsf{k}) =\mathbf{1}, && \boldsymbol{\varepsilon}(\mathsf{e}) =\boldsymbol{\varepsilon}(\mathsf{f})=0.\qquad && &\notag
\end{alignat}

Given a $U_q(\mathfrak{sl}_2)$-symmetry on $\mathbb{C}_q\big[x^{\pm 1},y^{\pm 1}\big]$, the generator $\mathsf{k}$ acts via an automorphism of $\mathbb{C}_q\big[x^{\pm 1},y^{\pm 1}\big]$, as one can readily deduce from invertibility of $\mathsf{k}$, Definition \ref{symdef}(i) and \eqref{k0}. In particular, every symmetry determines uniquely a matrix $\sigma\in {\rm SL}(2,\mathbb{Z})$ as in \eqref{Auto}.

There exists a one-to-one correspondence between the $U_q(\mathfrak{sl}_2)$-symmetries of $\mathbb{C}_q\big[x^{\pm 1},y^{\pm 1}\big]$ that leave invariant the subalgebra $\mathbb{C}_q[x,y]$ and the $U_q(\mathfrak{sl}_2)$-symmetries on $\mathbb{C}_q[x,y]$. One can readily restrict such symmetry of $\mathbb{C}_q\big[x^{\pm 1},y^{\pm 1}\big]$ to $\mathbb{C}_q[x,y]$.

On the other hand, suppose we are given an arbitrary symmetry $\pi$ on
$\mathbb{C}_q\big[x^{\pm 1},y^{\pm 1}\big]$ (not necessarily leaving invariant
$\mathbb{C}_q[x,y]$). One has the following relations:
\begin{alignat}{3}
&\pi(\mathsf{k})\big(x^{-1}\big)=(\pi(\mathsf{k})x)^{-1}, & & \pi(\mathsf{k})\big(y^{-1}\big)=(\pi(\mathsf{k})y)^{-1}, & \label{ext_k}\\
& \pi(\mathsf{e})\big(x^{-1}\big)=-x^{-1}(\pi(\mathsf{e})x)(\pi(\mathsf{k})x)^{-1}, & & \pi(\mathsf{e})\big(y^{-1}\big) =-y^{-1}(\pi(\mathsf{e})y)(\pi(\mathsf{k})y)^{-1}, & \label{ext_e}\\
& \pi(\mathsf{f})\big(x^{-1}\big)=-\big(\pi\big(\mathsf{k}^{-1}\big)x\big)^{-1}(\pi(\mathsf{f})x)x^{-1}, \ \ \, & & \pi(\mathsf{f})\big(y^{-1}\big)=-\big(\pi\big(\mathsf{k}^{-1}\big)y\big)^{-1}(\pi(\mathsf{f})y)y^{-1}.\!\!\!\! &\label{ext_f}
\end{alignat}
Here \eqref{ext_k} is straightforward since $\pi(\mathsf{k})$ is an automorphism; \eqref{ext_e} and \eqref{ext_f} are derivable by `differentiating' (i.e., applying $\mathsf{e}$ and $\mathsf{f}$, respectively, to) \eqref{Lpr}. Certainly, these relations remain true when $x$ or $y$ is replaced by an arbitrary invertible element.

Thus, given a symmetry on $\mathbb{C}_q[x,y]$, the relations \eqref{ext_k}--\eqref{ext_f} determine a well-defined extension of it to the additional generators $x^{-1}$, $y^{-1}$, hence to $\mathbb{C}_q\big[x^{\pm 1},y^{\pm 1}\big]$.

\section[The symmetries with non-trivial $\sigma$ and the generic symmetries]{The symmetries with non-trivial $\boldsymbol{\sigma}$\\ and the generic symmetries}\label{nw}

We first reproduce the results of \cite{S} which present a partial list of $U_q(\mathfrak{sl}_2)$-symmetries on $\mathbb{C}_q\big[x^{\pm 1},y^{\pm 1}\big]$.

Here and in what follows we describe the (series of) $U_q(\mathfrak{sl}_2)$-symmetries on $\mathbb{C}_q\big[x^{\pm 1},y^{\pm 1}\big]$ via determining an action of the distinguished generators of $U_q(\mathfrak{sl}_2)$ on the generators of $\mathbb{C}_q\big[x^{\pm 1}{,}y^{\pm 1}\big]$. To derive the associated $U_q(\mathfrak{sl}_2)$-symmetry, we first extend the action to monomials (both in~$U_q(\mathfrak{sl}_2)$ and $\mathbb{C}_q\big[x^{\pm 1},y^{\pm 1}\big]$) using
\begin{gather*}
(\mathsf{ab})u \overset{\rm def}{=}\mathsf{a}(\mathsf{b}u), \qquad \mathsf{a},\mathsf{b} \in U_q(\mathfrak{sl}_2), \qquad u \in\mathbb{C}_q\big[x^{\pm 1},y^{\pm 1}\big],\\
 \mathsf{a}(uv) \overset{\rm def}{=} \sum_{(\mathsf{a})}(\mathsf{a}_{(1)}u)\cdot(\mathsf{a}_{(2)}v), \qquad \mathsf{a} \in U_q(\mathfrak{sl}_2), \qquad u,v \in\mathbb{C}_q\big[x^{\pm 1},y^{\pm 1}\big],
\end{gather*}
(the Sweedler sigma-notation is implicit here), and then extend by linearity to the entire algebras $U_q(\mathfrak{sl}_2)$ and $\mathbb{C}_q\big[x^{\pm 1},y^{\pm 1}\big]$, using
\begin{gather*}
\mathsf{a}(u+v)=\mathsf{a}u+\mathsf{a}v,\qquad (\mathsf{a}+\mathsf{b})u=\mathsf{a}u+\mathsf{b}u,\\
\mathbf{1}u=u,\qquad \mathsf{a}\mathbf{1}=\boldsymbol{\varepsilon}(\mathsf{a})\mathbf{1}, \qquad\mathsf{a},\mathsf{b}\in U_q(\mathfrak{sl}_2),\qquad u,v\in\mathbb{C}_q\big[x^{\pm 1},y^{\pm 1}\big].
\end{gather*}
Such extension determines a well-defined action of $U_q(\mathfrak{sl}_2)$ on $\mathbb{C}_q\big[x^{\pm 1},y^{\pm 1}\big]$ if and only if everything passes through the relations in $U_q(\mathfrak{sl}_2)$ and $\mathbb{C}_q\big[x^{\pm 1},y^{\pm 1}\big]$. To verify this, one has to apply every generator of $U_q(\mathfrak{sl}_2)$ to each relation in $\mathbb{C}_q\big[x^{\pm 1},y^{\pm 1}\big]$, and then every relation in $U_q(\mathfrak{sl}_2)$ to each generator of $\mathbb{C}_q\big[x^{\pm 1},y^{\pm 1}\big]$. This is to be done in each specific case, and normally such verification is left to the reader.

It turns out that there exists a single rather poor family (series) of $U_q(\mathfrak{sl}_2)$-symmetries on $\mathbb{C}_q\big[x^{\pm 1},y^{\pm 1}\big]$
that correspond to non-trivial matrices $\sigma\in {\rm SL}(2,\mathbb{Z})$ which determine the action of Cartan generator. More precisely, the set of matrices $\sigma\ne I$ which appear this way reduces to a single matrix $\sigma=-I$, and one has

\begin{Theorem}[{\cite[Theorem 3.5, type (I)]{S}}]\label{sigma=-I}
There exists a two-parameter $(\alpha,\beta\in\mathbb{C}{\setminus}\{0\})$ family of $U_q(\mathfrak{sl}_2)$-symmetries on $\mathbb{C}_q\big[x^{\pm 1},y^{\pm 1}\big]$ that correspond to $\sigma=-I$:
\begin{alignat*}{3}
&\pi(\mathsf{k})(x)=\alpha^{-1}x^{-1}, \qquad && \pi(\mathsf{k})(y) =\beta^{-1}y^{-1}, &\\
& \pi(\mathsf{e})(x) =0,\qquad & & \pi(\mathsf{e})(y) =0, &\\
& \pi(\mathsf{f})(x) =0,\qquad && \pi(\mathsf{f})(y) =0. &
\end{alignat*}
These are all the symmetries with $\sigma=-I$ $($and also with $\sigma\ne I)$. These symmetries are all isomorphic, in particular to that with $\alpha=\beta=1$.
\end{Theorem}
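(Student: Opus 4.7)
The plan is to use the classification \eqref{Auto} of algebra automorphisms of $\mathbb{C}_q\big[x^{\pm 1},y^{\pm 1}\big]$ to write $\pi(\mathsf{k})=\varphi_{\sigma,\mu,\nu}$ for some $\sigma\in{\rm SL}(2,\mathbb{Z})$ and $\mu,\nu\in\mathbb{C}\setminus\{0\}$, then to reduce $\sigma$ to $\pm I$, and finally to treat the case $\sigma=-I$ directly and check the existence and isomorphism claims.

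The first and decisive step is to show that if $\sigma\ne I$ then $\sigma=-I$. My approach is to expand $\pi(\mathsf{e})(x)$ and $\pi(\mathsf{e})(y)$ in the monomial basis of the Laurent algebra and to match coefficients in the identity $\pi(\mathsf{k})\pi(\mathsf{e})=q^2\pi(\mathsf{e})\pi(\mathsf{k})$ applied to $x$ and to $y$. Because $\pi(\mathsf{k})$ sends the line $\mathbb{C}\,x^iy^j$ to $\mathbb{C}\,x^{ki+lj}y^{mi+nj}$ (with a scalar of the form $q^{\bullet}\mu^i\nu^j$), these coefficient equations are extremely rigid: outside the fixed-point locus of $\sigma^T$ on~$\mathbb{Z}^2$ they force almost all coefficients of $\pi(\mathsf{e})(x)$ and $\pi(\mathsf{e})(y)$ to vanish. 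Combined with the analogous equations for $\pi(\mathsf{f})$ coming from \eqref{kf}, the skew-derivation axioms built from~\eqref{def} and~\eqref{def1}, and the commutator relation~\eqref{effe}, a case analysis over the conjugacy types of ${\rm SL}(2,\mathbb{Z})$ (unipotent, elliptic of orders $3,4,6$, hyperbolic) shows that no consistent non-zero choice of $\pi(\mathsf{e}),\pi(\mathsf{f})$ exists when $\sigma\notin\{\pm I\}$. One is then left with $\pi(\mathsf{e})=\pi(\mathsf{f})=0$, whereupon \eqref{effe} reads $\pi(\mathsf{k})=\pi(\mathsf{k}^{-1})$, i.e., $\sigma^2=I$, forcing $\sigma=\pm I$. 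This case analysis is the main obstacle, since one must handle each ${\rm SL}(2,\mathbb{Z})$-conjugacy class separately while tracking the scaling freedom in $\mu,\nu$ and the skew-derivation structure.

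Once $\sigma=-I$, everything else is short. Direct computation gives $\pi(\mathsf{k}^2)(x)=\pi(\mathsf{k})(\mu x^{-1})=\mu(\mu x^{-1})^{-1}=x$ and analogously $\pi(\mathsf{k}^2)(y)=y$, so $\pi(\mathsf{k}^2)=\mathrm{id}$; then applying $\mathsf{k}^2\mathsf{e}=q^4\mathsf{e}\mathsf{k}^2$ to an arbitrary element yields $\pi(\mathsf{e})=q^4\pi(\mathsf{e})$, hence $\pi(\mathsf{e})=0$ (as $q^4\ne 1$), and symmetrically $\pi(\mathsf{f})=0$. Setting $\alpha=\mu^{-1}$ and $\beta=\nu^{-1}$ recovers the stated formulas. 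Existence is then verified by checking that $\pi(\mathsf{k})$ preserves~\eqref{qpr}, which reduces to the identity $y^{-1}x^{-1}=qx^{-1}y^{-1}$ (obtained by inverting~\eqref{qpr}), and that all remaining $U_q(\mathfrak{sl}_2)$-relations hold trivially. For the final isomorphism claim I would use the diagonal automorphism $\Psi(x)=\lambda x$, $\Psi(y)=\rho y$ with $\lambda^2=\alpha^{-1}$, $\rho^2=\beta^{-1}$ (square roots exist in $\mathbb{C}$); a direct computation shows that $\Psi\circ\pi(\mathsf{k})\circ\Psi^{-1}$ coincides with the operator associated to $\alpha=\beta=1$, while $\pi(\mathsf{e})$ and $\pi(\mathsf{f})$ are intertwined trivially.
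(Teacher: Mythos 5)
You should first note that the paper itself contains no proof of Theorem~\ref{sigma=-I}: it is reproduced verbatim from \cite[Theorem~3.5]{S}, so there is no in-paper argument to compare against, only the cited one. The parts of your proposal dealing with $\sigma=-I$ are correct and complete: $\pi(\mathsf{k})^2=\operatorname{id}$ follows from \eqref{ext_k} and the explicit form $\pi(\mathsf{k})(x)=\mu x^{-1}$, $\pi(\mathsf{k})(y)=\nu y^{-1}$; the relation $\mathsf{k}^2\mathsf{e}=q^4\mathsf{e}\mathsf{k}^2$ then forces $\pi(\mathsf{e})=0$ (and likewise $\pi(\mathsf{f})=0$) since $q^4\ne1$; the existence check reduces, as you say, to $y^{-1}x^{-1}=qx^{-1}y^{-1}$; and the diagonal automorphism with $\lambda^2=\alpha^{-1}$, $\rho^2=\beta^{-1}$ does intertwine the given symmetry with the one at $\alpha=\beta=1$.

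The genuine gap is in the step that carries the whole weight of the theorem: the exclusion of every $\sigma\notin\{\pm I\}$. You reduce this to ``a case analysis over the conjugacy types of ${\rm SL}(2,\mathbb{Z})$ shows that no consistent non-zero choice of $\pi(\mathsf{e}),\pi(\mathsf{f})$ exists,'' but that analysis is never carried out, and the heuristic offered in its place does not obviously close. For $\sigma\ne\pm I$ the operator $\pi(\mathsf{k})$ permutes the monomial lines along the (mostly infinite) orbits of $\sigma^T$ on $\mathbb{Z}^2$ and is therefore far from diagonal on the monomial basis, while $\pi(\mathsf{e})$ is a twisted derivation rather than a graded map; hence the identity $\pi(\mathsf{k})\pi(\mathsf{e})(x)=q^2\mu\,\pi(\mathsf{e})\big(x^ky^m\big)$ obtained from \eqref{ke} couples the support of $\pi(\mathsf{e})(x)$ to the supports of all terms in the Leibniz expansion of $\pi(\mathsf{e})\big(x^ky^m\big)$, which involve $\pi(\mathsf{e})\big(x^{\pm1}\big)$, $\pi(\mathsf{e})\big(y^{\pm1}\big)$ and further images under $\pi(\mathsf{k})$. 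One must actually track how these supports propagate under the resulting affine dynamics and play that off against the finiteness of Laurent supports, separately in the unipotent, finite-order and hyperbolic cases; invoking ``the fixed-point locus of $\sigma^T$'' and ``rigidity of the coefficient equations'' does not substitute for this. Until that analysis is supplied --- it is precisely the content of the proof in \cite{S} --- the assertion that there are no symmetries with $\sigma\notin\{\pm I\}$, and hence the clause ``these are all the symmetries with $\sigma\ne I$,'' remains unproved; everything you derive after that point (including the passage from $\pi(\mathsf{e})=\pi(\mathsf{f})=0$ and \eqref{effe} to $\sigma^2=I$, which is itself fine) rests on it.
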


Now turn to the case $\sigma=I$ and let $\pi$ be a symmetry with this property. Then it follows from~\eqref{Auto} that the action of Cartan element~$\mathsf{k}$ is given by multiplication of the generators~$x$,~$y$ by non-zero {\it weight constants}. We denote these weight constants by $\alpha$ and $\beta$, respectively:
\begin{gather*}\pi(\mathsf{k})(x)=\alpha x,\qquad\pi(\mathsf{k})(y)=\beta y.\end{gather*}
Certainly, monomials form a basis of weight vectors (eigenvectors for $\pi(\mathsf{k})$), and the associated eigenvalues are called {\it weights}.

Let us start with the simplest case in which the operators $\pi(\mathsf{e})$ and $\pi(\mathsf{f})$ are identically zero. This case has been disregarded in \cite{S} due to its triviality, but should be considered now in order to get a complete list of symmetries.

\begin{Lemma}\label{iz}Let $\pi$ be a $U_q(\mathfrak{sl}_2)$-symmetry on $\mathbb{C}_q\big[x^{\pm 1},y^{\pm 1}\big]$. The following properties of $\pi$ are equivalent:
\begin{enumerate}\itemsep=0pt
\item[$(i)$] the weight constants $\alpha,\beta\in\{-1;1\}$;
\item[$(ii)$] $\pi(\mathsf{e})$ is the identically zero operator on $\mathbb{C}_q\big[x^{\pm 1},y^{\pm 1}\big]$;
\item[$(iii)$] $\pi(\mathsf{f})$ is the identically zero operator on $\mathbb{C}_q\big[x^{\pm 1},y^{\pm 1}\big]$;
\item[$(iv)$] both $\pi(\mathsf{e})$ and $\pi(\mathsf{f})$ are the identically zero operators on $\mathbb{C}_q\big[x^{\pm 1},y^{\pm 1}\big]$.
\end{enumerate}
\end{Lemma}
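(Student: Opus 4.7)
The plan is to organise the equivalence around three nontrivial implications: (ii)$\Rightarrow$(i), (iii)$\Rightarrow$(i), and (i)$\Rightarrow$(iv); the arrows (iv)$\Rightarrow$(ii) and (iv)$\Rightarrow$(iii) are of course immediate.

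For (ii)$\Rightarrow$(i) (and, essentially word-for-word, (iii)$\Rightarrow$(i)) I would apply the defining relation \eqref{effe} under $\pi$ to the generator $x$. Since $\pi(\mathsf{e})=0$ (or $\pi(\mathsf{f})=0$), both $\pi(\mathsf{e})\pi(\mathsf{f})(x)$ and $\pi(\mathsf{f})\pi(\mathsf{e})(x)$ vanish, so $\pi(\mathsf{ef}-\mathsf{fe})(x)=0$; on the other hand, the right-hand side of \eqref{effe} applied to $x$ equals $\tfrac{\alpha-\alpha^{-1}}{q-q^{-1}}\,x$. This forces $\alpha^2=1$, and repeating with $y$ in place of $x$ yields $\beta^2=1$.

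For (i)$\Rightarrow$(iv) I would argue by weights. Because $\Delta(\mathsf{k})=\mathsf{k}\otimes\mathsf{k}$, every monomial $x^{m}y^{n}$ is a $\pi(\mathsf{k})$-weight vector of weight $\alpha^{m}\beta^{n}$, and under assumption (i) every such weight lies in $\{-1,1\}$. The commutation relation \eqref{ke} forces $\pi(\mathsf{e})(x)$, if nonzero, to be a weight vector of weight $q^{2}\alpha$; since $q$ is not a root of unity, $q^{2}\alpha\notin\{-1,1\}$, so $\pi(\mathsf{e})(x)=0$, and analogously $\pi(\mathsf{e})(y)=0$. The extension formulas \eqref{ext_e} then yield $\pi(\mathsf{e})(x^{-1})=\pi(\mathsf{e})(y^{-1})=0$. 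Finally, the twisted Leibniz rule $\pi(\mathsf{e})(uv)=u\cdot\pi(\mathsf{e})(v)+\pi(\mathsf{e})(u)\cdot\pi(\mathsf{k})(v)$ coming from \eqref{def} shows by a straightforward induction on monomial length that $\pi(\mathsf{e})\equiv 0$ on the whole algebra. The argument for $\pi(\mathsf{f})$ is entirely symmetric, invoking \eqref{kf}, \eqref{def1} and \eqref{ext_f}.

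No step is really an obstacle: the proof relies only on the defining relations of $U_q(\mathfrak{sl}_2)$, the coalgebra structure, and genericity of $q$. The single point requiring a line of care is the final propagation step, but once the values of $\pi(\mathsf{e})$ and $\pi(\mathsf{f})$ on the four generators $x^{\pm 1},y^{\pm 1}$ are known, the twisted Leibniz rules make the extension to $\mathbb{C}_q\big[x^{\pm 1},y^{\pm 1}\big]$ automatic.
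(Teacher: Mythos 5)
Your proposal is correct and follows essentially the same route as the paper: the implication (i)$\Rightarrow$(iv) is obtained by the same weight argument via \eqref{ke}, \eqref{kf} and \eqref{ext_e}, \eqref{ext_f}, and the implications (ii)$\Rightarrow$(i), (iii)$\Rightarrow$(i) by evaluating \eqref{effe} on $x$ and $y$ to force $\alpha=\alpha^{-1}$, $\beta=\beta^{-1}$. The only cosmetic difference is that you spell out the propagation of $\pi(\mathsf{e})\equiv 0$, $\pi(\mathsf{f})\equiv 0$ from the generators to the whole algebra via the twisted Leibniz rule, which the paper leaves implicit.
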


\begin{proof} Assume (i). Clearly the weight of any monomial is $\pm 1$. On the other hand, it follows from~\eqref{ke} that $\pi(\mathsf{e})$ takes $x$ to a weight vector whose weight is $\pm q^2\ne\pm 1$. Hence $\pi(\mathsf{e})(x)=0$, and one has also $\pi(\mathsf{e})\big(x^{-1}\big)=0$ in view of~\eqref{ext_e}. Similarly, $\pi(\mathsf{e})\big(y^{\pm 1}\big)=0$ in view of~\eqref{ke} and~\eqref{ext_e}. Thus we conclude that $\pi(\mathsf{e})\equiv 0$, which is just~(ii). The proof of (i) $\Rightarrow$ (iii) is similar.

Assume (ii). An application of~\eqref{effe} to $x$ and $y$ yields $\big(\pi(\mathsf{k})-\pi\big(\mathsf{k}^{-1}\big)\big)(x)=
\big(\pi(\mathsf{k})-\pi\big(\mathsf{k}^{-1}\big)\big)(y)=0$, hence $\alpha=\alpha^{-1}$, $\beta=\beta^{-1}$, which is equivalent to (i). The proof of (iii)~$\Rightarrow$~(i) is similar, and the rest of implications are clear. \end{proof}

\begin{Theorem}[type (II)]\label{T01}
Suppose that the weight constants $\alpha,\beta\in\{-1;1\}$. There exist $4$ $U_q(\mathfrak{sl}_2)$-symmetries on $\mathbb{C}_q\big[x^{\pm 1},y^{\pm 1}\big]$ given by
\begin{gather}
\pi(\mathsf{k})(x) =\pm x,\qquad\pi(\mathsf{k})(y)=\pm y, \label{t1}\\
\pi(\mathsf{e})(x) =\pi(\mathsf{e})(y)=\pi(\mathsf{f})(x)=\pi(\mathsf{f})(y)=0. \label{t2}
\end{gather}
These are all the symmetries with $\alpha,\beta\in\{-1;1\}$. They split in $2$ isomorphism classes: the first one is just the symmetry with $\alpha=\beta=1$ and the second one is the $($union of$)$ $3$ other symmetries.
\end{Theorem}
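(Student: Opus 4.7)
The statement contains three claims---existence of the four symmetries, their exhaustiveness within the range $\alpha,\beta\in\{-1,1\}$, and the partition into two isomorphism classes---and I would address them in that order. For existence, for each $(\alpha,\beta)\in\{-1,1\}^2$ I declare $\pi(\mathsf{k})$ to be the algebra automorphism scaling $x$ by $\alpha$ and $y$ by $\beta$ (compatible with $yx=qxy$) and set $\pi(\mathsf{e})=\pi(\mathsf{f})\equiv 0$. Then \eqref{ke} and \eqref{kf} hold trivially as $0=0$. For \eqref{effe} I evaluate on the basis of monomials: $(\pi(\mathsf{k})-\pi(\mathsf{k}^{-1}))(x^iy^j)=(\alpha^i\beta^j-\alpha^{-i}\beta^{-j})x^iy^j=0$, since $\alpha,\beta\in\{\pm 1\}$ gives $\alpha^i\beta^j=\alpha^{-i}\beta^{-j}$. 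The module-algebra identity of Definition~\ref{symdef}(i) for $\mathsf{e}$ and $\mathsf{f}$ reduces, via \eqref{def} and \eqref{def1}, to $0=0$; for $\mathsf{k}$ it amounts to multiplicativity of $\pi(\mathsf{k})$; Definition~\ref{symdef}(ii) is automatic.

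Exhaustiveness is a direct consequence of Lemma~\ref{iz}: the hypothesis $\alpha,\beta\in\{-1,1\}$ is precisely (i), which by (iv) forces $\pi(\mathsf{e})\equiv\pi(\mathsf{f})\equiv 0$; meanwhile $\pi(\mathsf{k})$ is determined by $(\alpha,\beta)$ on $x,y$ and uniquely extended to $x^{-1},y^{-1}$ via \eqref{ext_k}. So the four symmetries above exhaust the range.

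For the isomorphism classes, any $\Psi=\varphi_{\sigma,\mu,\nu}\in\operatorname{Aut}\big(\mathbb{C}_q\big[x^{\pm 1},y^{\pm 1}\big]\big)$ automatically intertwines the vanishing actions of $\mathsf{e}$ and $\mathsf{f}$, so the question reduces to conjugating $\pi_1(\mathsf{k})$ to $\pi_2(\mathsf{k})$. Identifying $\pi(\mathsf{k})$ with the scalar element $(\alpha,\beta)$ in the torus factor of $\operatorname{Aut}\cong{\rm SL}(2,\mathbb{Z})\ltimes(\mathbb{C}\setminus\{0\})^2$, the relation $\sigma(\mu,\nu)\sigma^{-1}=(\mu,\nu)^\sigma$ combined with commutativity of the torus yields $\Psi\cdot(\alpha,\beta)\cdot\Psi^{-1}=(\alpha,\beta)^\sigma=\big(\alpha^k\beta^m,\alpha^l\beta^n\big)$, exactly the action \eqref{sl_ac}. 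Hence isomorphism classes coincide with ${\rm SL}(2,\mathbb{Z})$-orbits on $\{-1,1\}^2$. The pair $(1,1)$ is fixed by every $\sigma$ and forms one class; for the other three, the matrices $\left(\begin{smallmatrix}1&0\\1&1\end{smallmatrix}\right)$ and $\left(\begin{smallmatrix}0&-1\\1&0\end{smallmatrix}\right)$ send $(-1,1)$ to $(-1,-1)$ and $(1,-1)$ respectively, so they form the second class. No step presents a serious obstacle: existence is a routine check through a nearly trivial action, exhaustiveness is a citation, and the classification is a finite orbit computation once the conjugation formula is recorded.
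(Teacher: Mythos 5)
Your proposal is correct and follows essentially the same route as the paper, whose proof is a one-line deferral to a ``trivial verification'' of well-definedness, an application of Lemma~\ref{iz} for exhaustiveness, and the remark that the isomorphisms among the three symmetries with $(\alpha,\beta)\ne(1,1)$ are easily available. You merely spell out the details the paper omits, including the ${\rm SL}(2,\mathbb{Z})$-orbit computation on $\{-1,1\}^2$ that also confirms $(1,1)$ is not isomorphic to the other three.
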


\noindent{\bf Proof} reduces to a (trivial) verification of the fact that~\eqref{t1} and~\eqref{t2} determine well-defined $U_q(\mathfrak{sl}_2)$-symmetries on $\mathbb{C}_q\big[x^{\pm 1},y^{\pm 1}\big]$, with a subsequent application of Lemma~\ref{iz}. The isomorphisms between the symmetries with $\binom{\alpha}{\beta}\ne\binom{1}{1}$ are easily available.
\hfill \qed

\medskip

In the rest of our considerations we assume that either $\pi(\mathsf{e})$ or $\pi(\mathsf{f})$ is not identically zero.

A pair of non-zero complex constants $\alpha$ and $\beta$ which could appear as weight constants for some $U_q(\mathfrak{sl}_2)$-symmetry of $\mathbb{C}_q\big[x^{\pm 1},y^{\pm 1}\big]$, can not be arbitrary. In fact, an obvious consequence of~\eqref{ke} claims that $\pi(\mathsf{e})$ sends a vector whose weight is $\gamma$ to a vector whose weight is $q^2\gamma$. In particular, $\pi(\mathsf{e})(x)$, if non-zero, is a sum of monomials with (the same) weight $q^2\alpha$. Since the weight of the monomial $ax^iy^j$ (with $a\ne 0$) is $\alpha^i\beta^j$, one has that $\alpha^u\beta^v=q^2$ for some integers~$u$,~$v$. Of course similar conclusions can be also derived by applying \eqref{ke}, \eqref{kf} to~$x$ and~$y$. Under our present assumptions (contrary to those of Lemma~\ref{iz}), this argument demonstrates that the pair~$\alpha$ and $\beta$ of weight constants for a $U_q(\mathfrak{sl}_2)$-symmetry of $\mathbb{C}_q\big[x^{\pm 1},y^{\pm 1}\big]$ is subject to $\alpha^u\beta^v=q^2$ for some (in general, non-unique) integers~$u$,~$v$.

The following theorem covers all but a countable family of admissible pairs of weight constants. The series of symmetries involved here have been called {\it generic} in~\cite{S}.

\begin{Theorem}[{\cite[Theorem 4.1, type (III)]{S}}]\label{gener}
Let a pair of constants $\alpha,\beta\in\mathbb{C}{\setminus}\{0\}$ be such that $\alpha^u\beta^v=q^2$ for some $u,v\in\mathbb{Z}$ and $\alpha^m\beta^n\ne 1$ for all integers $m$, $n$ with $(m,n)\ne(0,0)$. Then there exists a one-parameter $(a\in\mathbb{C}{\setminus}\{0\})$ family of $U_q(\mathfrak{sl}_2)$-symmetries of $\mathbb{C}_q\big[x^{\pm 1},y^{\pm 1}\big]$:
\begin{alignat}{3}
&\pi(\mathsf{k})(x)=\alpha x, \qquad & &\pi(\mathsf{k})(y) =\beta y, & \label{gener_k}\\
& \pi(\mathsf{e})(x) =aq^{uv+3}\frac{1-\alpha q^v}{\big(1-q^2\big)^2}x^{u+1}y^v, \qquad & & \pi(\mathsf{e})(y) =aq^{uv+3}\frac{q^u-\beta}{\big(1-q^2\big)^2}x^uy^{v+1}, & \label{gener_e}\\
& \pi(\mathsf{f})(x) =-\frac{\big(\alpha^{-1}-q^{-v}\big)}{a}x^{-u+1}y^{-v},\qquad && \pi(\mathsf{f})(y) =-\frac{\big(\beta^{-1}q^{-u}-1\big)}{a}x^{-u}y^{-v+1}. &\label{gener_f}
\end{alignat}
There exist no other symmetries with the weight constants $\alpha$ and $\beta$. Two symmetries of this form are isomorphic if and only if the associated pairs of weight constants are on the same orbit of the ${\rm SL}(2,\mathbb{Z})$-action \eqref{sl_ac} on $(\mathbb{C}{\setminus}\{0\})^2$.
\end{Theorem}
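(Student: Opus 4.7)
The plan is to exploit the rational independence hypothesis $\alpha^m\beta^n\ne 1$ for $(m,n)\ne(0,0)$ to pin down the monomial support of $\pi(\mathsf{e})$ and $\pi(\mathsf{f})$ on the generators. Since monomials diagonalize $\pi(\mathsf{k})$ and the weight of $x^iy^j$ is $\alpha^i\beta^j$, under the hypothesis distinct Laurent monomials carry distinct weights. The relations \eqref{ke}, \eqref{kf} force $\pi(\mathsf{e})(x)$ to be a weight vector of weight $q^2\alpha=\alpha^{u+1}\beta^v$ and $\pi(\mathsf{f})(x)$ to be a weight vector of weight $q^{-2}\alpha=\alpha^{-u+1}\beta^{-v}$, so each is a scalar multiple of a single monomial. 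The same reasoning handles $\pi(\mathsf{e})(y)$ and $\pi(\mathsf{f})(y)$. The integers $u,v$ are themselves uniquely determined by rational independence.

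Write $\pi(\mathsf{e})(x)=c_1x^{u+1}y^v$, $\pi(\mathsf{e})(y)=c_2x^uy^{v+1}$, $\pi(\mathsf{f})(x)=d_1x^{-u+1}y^{-v}$, $\pi(\mathsf{f})(y)=d_2x^{-u}y^{-v+1}$ for unknown constants. Next I would impose the quantum plane relation \eqref{qpr} by applying $\pi(\mathsf{e})$ and $\pi(\mathsf{f})$ to $yx-qxy=0$ using the coproducts \eqref{def}, \eqref{def1}. Expanding $\pi(\mathsf{e})(yx)-q\pi(\mathsf{e})(xy)=0$ with $\Delta(\mathsf{e})=\mathbf{1}\otimes\mathsf{e}+\mathsf{e}\otimes\mathsf{k}$, after collecting all factors of $q$ coming from moving $x$ past $y$ inside the resulting Laurent monomials, yields a single linear relation between $c_1$ and $c_2$ with coefficients built from $\alpha,\beta,q^u,q^v$; and similarly one between $d_1$ and $d_2$. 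This fixes $c_2$ in terms of $c_1$ and $d_2$ in terms of $d_1$, producing the shape of the right-hand sides in \eqref{gener_e}, \eqref{gener_f}.

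The remaining Serre-type relation \eqref{effe} applied to $x$ and to $y$ gives two equations of the form $c_1d_1\cdot(\text{rational expression in }\alpha,\beta,q,q^u,q^v)=(\alpha-\alpha^{-1})/(q-q^{-1})$ and a similar one in $\beta$; consistency among these (using $\alpha^u\beta^v=q^2$) pins down the product $c_1d_1$. Exactly one free parameter $a$ remains, coming from the obvious rescaling symmetry $(\pi(\mathsf{e}),\pi(\mathsf{f}))\mapsto(a\pi(\mathsf{e}),a^{-1}\pi(\mathsf{f}))$ which preserves all relations. Substituting the normalization $\alpha^u\beta^v=q^2$ and reading off the coefficients reproduces the formulas in \eqref{gener_e}, \eqref{gener_f}. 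Conversely, given these formulas one must verify that the action extends consistently to all of $\mathbb{C}_q\big[x^{\pm 1},y^{\pm 1}\big]$: the extension to $x^{-1},y^{-1}$ is forced by \eqref{ext_k}--\eqref{ext_f}, and the remaining verifications reduce to checking \eqref{effe} on $x,y$ (built in) and preservation of \eqref{qpr} and \eqref{Lpr}, a finite computation.

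For the isomorphism statement, suppose $\Psi\pi_1(\cdot)\Psi^{-1}=\pi_2(\cdot)$ for some automorphism $\Psi$ of $\mathbb{C}_q\big[x^{\pm 1},y^{\pm 1}\big]$. By the description of $\operatorname{Aut}\big(\mathbb{C}_q\big[x^{\pm 1},y^{\pm 1}\big]\big)$ recalled in Section~\ref{prel}, $\Psi=\varphi_{\sigma,\mu,\nu}$ for some $\sigma\in\mathrm{SL}(2,\mathbb{Z})$ and $\mu,\nu\in\mathbb{C}\setminus\{0\}$. Comparing the images of $x$ and $y$ under $\pi_i(\mathsf{k})$ before and after conjugation shows that the weight constants transform by the $\mathrm{SL}(2,\mathbb{Z})$-action \eqref{sl_ac} of $\sigma$, proving necessity. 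Sufficiency is constructive: given $(\alpha_2,\beta_2)=\sigma(\alpha_1,\beta_1)$, build $\Psi=\varphi_{\sigma,\mu,\nu}$ and compute that with a suitable choice of $\mu,\nu$ (and possibly adjustment of the parameter $a$) the conjugation intertwines $\pi_1$ with $\pi_2$. The main obstacle throughout is careful bookkeeping of the $q$-powers produced by passing Laurent monomials through one another inside the coproduct expansions, and verifying that the freedom in $(\mu,\nu,a)$ exactly matches the one-parameter ambiguity of the symmetry family.
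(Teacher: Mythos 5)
Your proposal is correct and follows essentially the same route as the paper: the paper's own proof covers only the isomorphism criterion (citing \cite[Theorem~4.1]{S} for existence and uniqueness), and there your argument --- necessity from how conjugation transforms the weight constants, sufficiency by first reducing to equal weight constants via $\varphi_{\sigma,1,1}$ and then solving $\mu^u\nu^v=a'/a$ with $\varphi_{I,\mu,\nu}$, which is possible since $(u,v)\ne(0,0)$ --- is exactly the one given. The existence/uniqueness portion you reconstruct (single-monomial supports forced by rational independence of the weights, the linear relations between coefficients obtained by differentiating $yx=qxy$ as in Lemma~\ref{ratios}, and pinning down the product of the remaining two coefficients from \eqref{effe}, leaving the one-parameter rescaling freedom) matches the strategy of the cited reference as it is reflected in Section~\ref{wpem} of the paper.
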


\begin{proof} We restrict ourselves to proving the last claim (the isomorphism criterion) which was absent in \cite[Theorem 4.1]{S}.

The `only if' part is obvious.

Assume that two symmetries $\pi$ and $\pi'$ as in \eqref{gener_k}--\eqref{gener_f} have pairs of weight constants which are on the same orbit of the ${\rm SL}(2,\mathbb{Z})$-action. Let us replace one of these symmetries (let it be~$\pi$) with an isomorphic symmetry $B\pi B^{-1}$, where $B=\varphi_{\sigma,1,1}\in\operatorname{Aut}\big(\mathbb{C}_q\big[x^{\pm 1},y^{\pm 1}\big]\big)$ is as in~\eqref{Auto} for a suitable matrix $\sigma\in {\rm SL}(2,\mathbb{Z})$. This replacement allows us to assume that $\pi$ and $\pi'$ have the same pair of weight constants and differ only in the complex constants $a$ and $a'$, respectively, in~\eqref{gener_e},~\eqref{gener_f}. Now with $A=\varphi_{I,\mu,\nu}\in\operatorname{Aut}\big(\mathbb{C}_q\big[x^{\pm 1},y^{\pm 1}\big]\big)$ one can arrange a routine computation of $A\pi(\mathsf{e})A^{-1}(x)$, $A\pi(\mathsf{e})A^{-1}(y)$, $A\pi(\mathsf{f})A^{-1}(x)$, $A\pi(\mathsf{f})A^{-1}(y)$ based on~\eqref{gener_e},~\eqref{gener_f}, and~\eqref{Auto} (left to the reader) and then equate these $4$ elements of $\mathbb{C}_q\big[x^{\pm 1},y^{\pm 1}\big]$ to $\pi'(\mathsf{e})(x)$, $\pi'(\mathsf{e})(y)$, $\pi'(\mathsf{f})(x)$, $\pi'(\mathsf{f})(y)$, respectively. Using the fact that, under the assumptions of the Theorem, none of the constant multipliers in~\eqref{gener_e}, \eqref{gener_f} can be zero, one concludes that each of the $4$ relations is equivalent to the same relation $\mu^u\nu^v=\frac{a'}a$. As one can readily deduce from the assumptions on weight constants that $(u,v)\ne(0,0)$, a pair~$(\mu,\nu)$ satisfying the latter relation exists. This proves our claim.
\end{proof}

\begin{Remark}The generic symmetries described by Theorem~\ref{gener} constitute the most massive part of the entire collection of $U_q(\mathfrak{sl}_2)$-symmetries of $\mathbb{C}_q\big[x^{\pm 1},y^{\pm 1}\big]$, as compared with those given by Theorems~\ref{sigma=-I} and~\ref{T01} where $\pi(\mathsf{e})$ and $\pi(\mathsf{f})$ are the identically zero operators on $\mathbb{C}_q\big[x^{\pm 1},y^{\pm 1}\big]$. Even more, among the symmetries with $\sigma=I$ where the pair of weight constants $(\alpha,\beta)$ is the most essential feature of a specific symmetry, the collection of generic symmetries covers all but a countable family of admissible pairs of weight constants. This is contrary to non-generic symmetries described below in this work, where the collection of pairs of weight constants involved is only countable. This observation is completed by
\end{Remark}

\begin{Proposition}\label{ufnis} The collection of generic symmetries presented by Theorem~{\rm \ref{gener}} contains an uncountable family of isomorphism classes of $U_q(\mathfrak{sl}_2)$-symmetries on $\mathbb{C}_q\big[x^{\pm 1},y^{\pm 1}\big]$.
\end{Proposition}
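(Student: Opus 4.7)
The plan is to combine the isomorphism criterion proved in Theorem~\ref{gener} with a counting argument based on the fact that ${\rm SL}(2,\mathbb{Z})$ is a countable group. The isomorphism criterion states that two symmetries in the generic family are isomorphic if and only if their weight pairs lie in the same orbit of the ${\rm SL}(2,\mathbb{Z})$-action~\eqref{sl_ac} on $(\mathbb{C}\setminus\{0\})^2$. Since ${\rm SL}(2,\mathbb{Z})$ is countable, every such orbit is at most countable, so it suffices to produce an uncountable family of admissible weight pairs.

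First I would exhibit such a family by fixing the simplest exponents $u=1$, $v=0$ in the hypothesis $\alpha^u\beta^v=q^2$ of Theorem~\ref{gener}. Setting $\alpha=q^2$ forces this condition automatically, and I would let $\beta$ vary over $\mathbb{C}\setminus\{0\}$. The remaining genericity requirement $\alpha^m\beta^n\ne 1$ for all $(m,n)\ne(0,0)$ becomes $q^{2m}\beta^n\ne 1$. When $n=0$ and $m\ne 0$ this holds automatically since $q$ is not a root of unity; when $n\ne 0$, for each fixed $(m,n)$ only the $|n|$ values of $\beta$ that are $n$-th roots of $q^{-2m}$ are excluded. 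Since the index set $\{(m,n)\in\mathbb{Z}^2:n\ne 0\}$ is countable, only countably many values of $\beta$ are excluded altogether, leaving an uncountable set of admissible pairs $(q^2,\beta)$.

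Finally, since every ${\rm SL}(2,\mathbb{Z})$-orbit on $(\mathbb{C}\setminus\{0\})^2$ is at most countable, the uncountable family of admissible pairs $(q^2,\beta)$ must meet uncountably many distinct orbits. By the isomorphism criterion, each such orbit corresponds to a distinct isomorphism class of generic $U_q(\mathfrak{sl}_2)$-symmetries, which yields the claim.

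I do not anticipate any serious obstacle here; the only delicate point is verifying that the set of $\beta$ excluded by genericity is countable, and this is immediate from the explicit description above. The argument effectively packages the main content of Theorem~\ref{gener} (namely its isomorphism classification) together with an elementary cardinality consideration, which is exactly why Proposition~\ref{ufnis} appears as a straightforward consequence of the preceding results.
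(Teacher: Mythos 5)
Your proposal is correct and follows essentially the same route as the paper: both arguments rest on the isomorphism criterion (isomorphic generic symmetries have weight pairs on the same ${\rm SL}(2,\mathbb{Z})$-orbit), the countability of each orbit of the countable group ${\rm SL}(2,\mathbb{Z})$, and the uncountability of the set of admissible weight pairs. The only difference is that you explicitly verify the uncountability of the admissible set via the family $(q^2,\beta)$ with $u=1$, $v=0$, a detail the paper asserts without elaboration, so your write-up is if anything slightly more complete.
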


\begin{proof} Let a generic symmetry $\pi$ with weight constants $(\alpha,\beta)$ be given. Consider an isomorphic symmetry $\pi'=A^{-1}\pi A$, where $A=\varphi_{\sigma,\mu,\nu}\in\operatorname{Aut}\big(\mathbb{C}_q\big[x^{\pm 1},y^{\pm 1}\big]\big)$ is as in~\eqref{Auto}. A routine verification demonstrates that the weight constants $\alpha'$, $\beta'$ of $\pi'$ are given by $\binom{\alpha'}{\beta'}=\sigma\binom{\alpha}{\beta}$ with the action of $\sigma$ being as in~\eqref{sl_ac}, while the parameters $\mu$, $\nu$ do not affect the weight constants of the isomorphic symmetry $\pi'$.

The first consequence of the above argument is that the weight constants $\alpha'$, $\beta'$ of $\pi'$ are again subject to the assumptions of Theorem \ref{gener}, hence $\pi'$ is also generic. That is, the family of generic symmetries is closed under passage to an isomorphic symmetry.

Another thing to be deduced is that every isomorphism class of generic symmetries is contained in a certain class of symmetries whose pairs of weight constants are on the same orbit of the action of (the countable group) ${\rm SL}(2,\mathbb{Z})$ on an uncountable invariant subset of $(\mathbb{C}{\setminus}\{0\})^2$. The latter subset is just the collection of $(\alpha,\beta)$ subject to the assumptions of Theorem \ref{gener}. This implies our claim.
\end{proof}

\begin{Remark}It has been mentioned in Section \ref{prel} that there exists a one-to-one correspondence between the $U_q(\mathfrak{sl}_2)$-symmetries on $\mathbb{C}_q[x,y]$ appearing in \cite[Theorems 4.2--4.7]{DS} and those of $\mathbb{C}_q\big[x^{\pm 1},y^{\pm 1}\big]$ that leave invariant the subalgebra $\mathbb{C}_q[x,y]$, which is nothing more than the restriction-extension procedures. In particular, every $U_q(\mathfrak{sl}_2)$-symmetry on $\mathbb{C}_q[x,y]$ described in~\cite{DS} should have its counterpart in the list of symmetries we produce here. It is easily visible that, in view of the above correspondence, the symmetries of \cite[Theorem 4.2]{DS} are the same as the symmetries in Theorem \ref{T01}.

As for all other symmetries of \cite{DS}, they do not extend to symmetries as in Theorems~\ref{sigma=-I} and~\ref{gener}. In the first case, the symmetries with $\sigma=-I$ do not leave $\mathbb{C}_q[x,y]$ invariant. As for the generic symmetries, it should be noted that, under the assumptions of Theorem~\ref{gener}, the constant multipliers in~\eqref{gener_e},~\eqref{gener_f} are all (including fractions) non-zero. On the other hand, one can find in every non-trivial symmetry in the final table of \cite{DS}, at least one identically zero expression among the formulas for $\pi(\mathsf{e})(x)$, $\pi(\mathsf{e})(y)$, $\pi(\mathsf{f})(x)$, $\pi(\mathsf{f})(y)$.

Hence, the generic symmetries do not leave invariant the subalgebra $\mathbb{C}_q[x,y]$. Another thing to be deduced here is that the counterparts of non-trivial symmetries~\cite{DS} are to be found among the non-generic symmetries of $\mathbb{C}_q\big[x^{\pm 1},y^{\pm 1}\big]$ described below.
\end{Remark}

\section{Weight polynomials and extreme monomials}\label{wpem}

We start with defining the integral parameters of type (IV) (non-generic) $U_q(\mathfrak{sl}_2)$-symmetries on $\mathbb{C}_q\big[x^{\pm 1},y^{\pm 1}\big]$ based on considering the properties of the associated pair of weight constants. Although these parameters are not determined uniquely by a symmetry, they allow one to derive two more parameters $D$ and $G$. The subsequent exposition demonstrates that the latter para\-me\-ters, while still admit certain ambiguities in their definition, work as isomorphism invariants for type (IV) symmetries and allow the classification suggested in this work.

In what follows we make implicit the natural action of (the semigroup of) integral matrices by endomorphisms of the multiplicative group $(\mathbb{C}{\setminus}\{0\})^2$:
\begin{gather}
\begin{pmatrix}a & b\\ c & d\end{pmatrix}\binom\mu\nu=\binom{\mu^a\nu^b}{\mu^c\nu^d}. \label{*}
\end{gather}

{\bf Integral parameters of a non-generic symmetry.} Let $\pi$ be a $U_q(\mathfrak{sl}_2)$-symmetry on $\mathbb{C}_q\big[x^{\pm 1},y^{\pm 1}\big]$ with the associated matrix $\sigma=I$. Assume that $\pi$ is type (IV), that is, the associated pair of weight constants $\alpha$, $\beta$ satisfy $\alpha^u\beta^v=q^2$ for some $u,v\in\mathbb{Z}$ (just the assumption on~$\pi$ being not type~(II)), but, on the contrary to assumptions of Theorem \ref{gener}, $\alpha^r\beta^s=1$ for some integers $r,s$ other than $r=s=0$. Let us choose a pair $(r,s)$ to be {\it minimal} in the following sense.

\begin{Definition}Given a non-generic $U_q(\mathfrak{sl}_2)$-symmetry $\pi$ on $\mathbb{C}_q\big[x^{\pm 1},y^{\pm 1}\big]$ with the associated matrix $\sigma=I$ and the weight constants $\alpha$, $\beta$, the pair of integers $(r,s)$ with $\alpha^r\beta^s=1$ is said to be {\it minimal} if, under the assumption $r=wr'$, $s=ws'$ for some integers $w$, $r'$, $s'$, $w\ne\pm 1$, one has $\alpha^{r'}\beta^{s'}\ne 1$.
\end{Definition}

Clearly, under these settings the pair $(r,s)$ is unique up to multiplying both $r$ and $s$ by $-1$, and we assume this pair to be fixed while considering a specific type~(IV) $U_q(\mathfrak{sl}_2)$-symmetry $\pi$.

Also, the pair $(u,v)$ is not unique for the given $\pi$, as with $u'=u+wr$, $v'=v+ws$, $w\in\mathbb{Z}$, one has $\alpha^{u'}\beta^{v'}=q^2$.

Let us associate to a type (IV) symmetry $\pi$ the matrix $\Phi=\left(\begin{smallmatrix}r & s\\ u & v\end{smallmatrix}\right)$, then
the weight constants $\alpha$, $\beta$ satisfy
\begin{gather}\label{wce}
\Phi\binom{\alpha}{\beta}=\binom{1}{q^2}.
\end{gather}
The entries $u$, $v$, $r$, $s$ of $\Phi$ will be referred to as {\it integral parameters} of a type (IV) symmetry $\pi$, with the minimality for the pair $(r,s)$ being implicit. Now~\eqref{wce} may be treated as a definition (although ambiguous in view of the above discussion) of integral parameters of $\pi$.

One may ask if an arbitrary integral matrix corresponds in a manner described above to a non-generic $U_q(\mathfrak{sl}_2)$-symmetry $\pi$ on $\mathbb{C}_q\big[x^{\pm 1},y^{\pm 1}\big]$. Our subsequent observations demonstrate that this conjecture fails.

One more trivial consequence of our choices is in observing that for any integers $m$, $n$, the subspace of weight polynomials with the same weight as $x^my^n$ is just the linear span of the monomials $x^{m+wr}y^{n+ws}$, $w\in\mathbb{Z}$.

Let us introduce the {\it discriminant} $D=rv-su$ of a non-generic symmetry $\pi$. It follows from our assumptions that $D\ne 0$. The above ambiguities in the choice of $u$, $v$, $r$, $s$, given $\pi$, could at most affect $D$ in multiplication by $-1$. This corresponds to the context of

\begin{Proposition}\label{D_inv}$|D|$ is an invariant of isomorphism class of non-generic $U_q(\mathfrak{sl}_2)$-symmetries on $\mathbb{C}_q\big[x^{\pm 1},y^{\pm 1}\big]$.
\end{Proposition}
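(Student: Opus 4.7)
The plan is to exploit the explicit structure of the automorphism group of $\mathbb{C}_q\big[x^{\pm 1},y^{\pm 1}\big]$ and track how the integral parameters of a non-generic symmetry transform under an isomorphism. First, I would recall from the proof of Proposition~\ref{ufnis} that every isomorphism between non-generic symmetries $\pi$ and $\pi'$ can be realized as $\pi' = A^{-1}\pi A$ for some $A = \varphi_{\sigma,\mu,\nu} \in \operatorname{Aut}\big(\mathbb{C}_q\big[x^{\pm 1},y^{\pm 1}\big]\big)$, and that the weight constants then transform by $\binom{\alpha'}{\beta'} = \sigma\binom{\alpha}{\beta}$ in the sense of the multiplicative action~\eqref{sl_ac}, independently of $(\mu,\nu)$. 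Invariance of $|D|$ under isomorphism is thus reduced to invariance under this ${\rm SL}(2,\mathbb{Z})$-action on the weight constants.

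Second, I would fix admissible parameter matrices $\Phi = \left(\begin{smallmatrix} r & s \\ u & v \end{smallmatrix}\right)$ for $\pi$ and $\Phi' = \left(\begin{smallmatrix} r' & s' \\ u' & v' \end{smallmatrix}\right)$ for $\pi'$ in the sense of~\eqref{wce}, and observe via the multiplicative action~\eqref{*} the identity
\begin{gather*}
\Phi'\binom{\alpha'}{\beta'} \;=\; (\Phi'\sigma)\binom{\alpha}{\beta} \;=\; \binom{1}{q^2},
\end{gather*}
so that the integer matrix $\Phi'\sigma$ satisfies the defining relation~\eqref{wce} for $\pi$.

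Third, I would verify that $\Phi'\sigma$ is itself an admissible parameter matrix for $\pi$, that is, that its top row is minimal for $(\alpha,\beta)$. The subgroup $K = \big\{(a,b)\in\mathbb{Z}^2 : \alpha^a\beta^b = 1\big\}$ is non-trivial by the non-generic hypothesis and proper in $\mathbb{Z}^2$ (otherwise $\alpha=\beta=1$ would force $q^2=1$, contradicting the standing assumption on $q$); hence $K$ is a rank-$1$ sublattice whose primitive generators are precisely $\pm(r,s)$, by the minimality of the top row of $\Phi$. Likewise $K' = \big\{(a,b)\in\mathbb{Z}^2 : (\alpha')^a(\beta')^b = 1\big\}$ has primitive generators $\pm(r',s')$. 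Now right multiplication by $\sigma$ is a $\mathbb{Z}$-module automorphism of $\mathbb{Z}^2$ (since $\sigma \in {\rm SL}(2,\mathbb{Z})$), and the identity $(\alpha^k\beta^m)^{r'}(\alpha^l\beta^n)^{s'} = (\alpha')^{r'}(\beta')^{s'} = 1$ shows that it carries $K'$ bijectively onto $K$; a $\mathbb{Z}$-module isomorphism between rank-$1$ lattices must send primitive generators to primitive generators, whence $(r',s')\sigma = \pm(r,s)$.

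Fourth, $\Phi'\sigma$ differs from $\Phi$ at worst by a sign flip of the top row and by adding an integer multiple of the top row to the bottom row, neither of which affects the absolute value of the determinant. Combined with $\det\sigma = 1$ this gives
\begin{gather*}
|D'| \;=\; |\det\Phi'| \;=\; |\det(\Phi'\sigma)| \;=\; |\det\Phi| \;=\; |D|.
\end{gather*}
The main (and ultimately rather mild) obstacle is the primitivity argument in the third step: one must recognize that ${\rm SL}(2,\mathbb{Z})$ acts on $\mathbb{Z}^2$ by $\mathbb{Z}$-module automorphisms, and therefore preserves primitivity in every rank-$1$ sublattice. Once this is articulated, the remainder is routine bookkeeping.
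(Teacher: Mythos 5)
Your proposal is correct and follows the same overall strategy as the paper's proof: realize the isomorphism by an automorphism $\varphi_{\sigma,\mu,\nu}$, observe that the weight constants transform by $\sigma$ (independently of $\mu$, $\nu$), transport the matrix of integral parameters by right multiplication by $\sigma$, check that minimality of the first row is preserved, and conclude by comparing determinants. The difference is in how the minimality step is handled. The paper constructs the specific admissible matrix $\Phi\sigma^{-1}$ for $\pi'$ and verifies its minimality by contradiction via an explicit matrix manipulation; you instead compare two independently chosen admissible matrices and argue through the relation lattice $K=\big\{(a,b)\in\mathbb{Z}^2\colon\alpha^a\beta^b=1\big\}$, characterizing the minimal pairs $\pm(r,s)$ as its generators and noting that right multiplication by $\sigma$ is a $\mathbb{Z}$-module isomorphism $K'\to K$. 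This is a cleaner packaging of the same idea, and it has the extra virtue of absorbing in one stroke the ambiguity in the choice of $\Phi'$ (both the sign of the top row and the shift of the bottom row by an element of $K$), which the paper instead disposes of in the discussion preceding the proposition.

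One small repair is needed in your third step: from ``$K$ is non-trivial and proper in $\mathbb{Z}^2$'' you cannot conclude that $K$ has rank $1$, since a proper non-trivial subgroup of $\mathbb{Z}^2$ can have rank $2$ (e.g., $2\mathbb{Z}\times 2\mathbb{Z}$). What actually excludes rank $2$ is the standing assumption on $q$: if $K$ contained two linearly independent vectors, then $\alpha^N=\beta^N=1$ for some $N\neq 0$, and $\alpha^u\beta^v=q^2$ would force $q^{2N}=1$, contradicting that $q$ is not a root of unity. With that line inserted, the remainder of your argument --- generators map to generators, the bottom rows of $\Phi'\sigma$ and $\Phi$ differ by an element of $K$, hence $|\det(\Phi'\sigma)|=|\det\Phi|$ and $|D'|=|D|$ --- goes through.
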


\begin{proof} Let a non-generic symmetry $\pi$ be given. Let us consider an isomorphic symmetry $\pi'=\Psi^{-1}\pi\Psi$, with an automorphism $\Psi\in\operatorname{Aut}\big(\mathbb{C}_q\big[x^{\pm 1},y^{\pm 1}\big]\big)$, $\Psi=\varphi_{\sigma,\mu,\nu}$ as in \eqref{Auto}, whose associated matrix is $\sigma=\left(\begin{smallmatrix}k & m\\ l & n\end{smallmatrix}\right)\in {\rm SL}(2,\mathbb{Z})$. Clearly, the weight constants of $\pi'$ are $\alpha'=\alpha^k\beta^m$, $\beta'=\alpha^l\beta^n$, that is $\sigma\binom\alpha\beta=\binom{\alpha'}{\beta'}$.

With the matrix $\Phi=\left(\begin{smallmatrix}r & s\\ u & v\end{smallmatrix}\right)$ of integral parameters of $\pi$ one has
\begin{gather*}
\Phi\sigma^{-1}\binom{\alpha'}{\beta'}=\Phi\binom\alpha\beta=\binom{1}{q^2}.
\end{gather*}
This implies that the integral matrix $\Phi'\stackrel{\mathrm{def}}{=}\Phi\sigma^{-1}=\left(\begin{smallmatrix}r' & s'\\ u' & v'\end{smallmatrix}\right)$ is formed by the integral parameters of the symmetry $\pi'$ subject to all the necessary relations. Among those, the only item to be verified is the minimality condition for $r'$, $s'$.

Assume the contrary, that is $r'=wr''$, $s'=ws''$ for some non-zero integer $w\ne\pm 1$, but $\alpha^{r''}\beta^{s''}=1$. Consider the matrix $\Phi''\stackrel{\mathrm{def}}{=}\left(\begin{smallmatrix}r'' & s''\\ u' & v'\end{smallmatrix}\right)$, then one has
\begin{gather*}
\Phi''\binom{\alpha'}{\beta'}=\binom{1}{q^2}\qquad\text{and}\qquad
\Phi'=\Phi\sigma^{-1}=\begin{pmatrix}w & 0\\ 0 & 1\end{pmatrix}\Phi''.
\end{gather*}
It follows that $\left(\begin{smallmatrix}w^{-1} & 0\\ 0 & 1\end{smallmatrix}\right)\Phi=\Phi''\sigma$ is an integral matrix, and the latter relation being applied to $\binom\alpha\beta$ yields
\begin{gather*}
\begin{pmatrix}w^{-1} & 0\\ 0 & 1\end{pmatrix}\Phi\binom\alpha\beta=\Phi''\sigma\binom\alpha\beta=
\Phi''\binom{\alpha'}{\beta'}=\binom{1}{q^2}.
\end{gather*}
This contradicts to the minimality assumption on $r$, $s$, as the latter are just the entries of the first line of $\Phi$. The contradiction we get this way proves the minimality assumption on~$r'$,~$s'$.

Clearly, $\det\Phi'=\det\Phi$, which implies the claim of our proposition. \end{proof}

\begin{Remark}The proof of Proposition \ref{D_inv} could persuade the reader that the discriminant $D=\det\Phi=rv-su$ is a more reasonable and subtle invariant than $|D|$, which might possibly separate more isomorphism classes of non-generic symmetries. However, it will become clear later that (simultaneous) multiplying $r$ and $s$ (hence also $\det\Phi$) by~$-1$ produces a sort of reparametrization of the same symmetry, thus remaining intact its isomorphism class.
\end{Remark}

There exists one more invariant, to be used later in classifying the symmetries. Let $\operatorname{gcd}(r,s)$ stand for the greatest common divisor for integers $r$, $s$.

\begin{Proposition}$G=\operatorname{gcd}(r,s)$ is an invariant of isomorphism class of non-generic $U_q(\mathfrak{sl}_2)$-symmetries on $\mathbb{C}_q\big[x^{\pm 1},y^{\pm 1}\big]$, with $r$, $s$ being the matrix elements of the first line of $\Phi$, the matrix of integral parameters of a symmetry.
\end{Proposition}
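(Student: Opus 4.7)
The plan is to parallel closely the proof of Proposition~\ref{D_inv} and extract from it the transformation rule for the first row of the integral parameter matrix $\Phi$. Given an isomorphism $\pi' = \Psi^{-1}\pi\Psi$ with $\Psi = \varphi_{\sigma,\mu,\nu}$, where $\sigma = \bigl(\begin{smallmatrix}k & m\\ l & n\end{smallmatrix}\bigr)\in {\rm SL}(2,\mathbb{Z})$, that proof already shows that the matrix of integral parameters of $\pi'$ can be taken to be $\Phi' = \Phi\sigma^{-1}$, and that the top row $(r',s')$ of $\Phi'$ satisfies the minimality condition. So I can reuse this setup directly.

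Next, I would compute the first row of $\Phi' = \Phi\sigma^{-1}$ explicitly. Since $\sigma \in {\rm SL}(2,\mathbb{Z})$, its inverse is $\sigma^{-1} = \bigl(\begin{smallmatrix}n & -m\\ -l & k\end{smallmatrix}\bigr)$, so
\[
(r',s') = (rn - sl,\; -rm + sk).
\]
Then $G = \operatorname{gcd}(r,s)$ divides both $r$ and $s$, hence divides $r'$ and $s'$, so $G \mid \operatorname{gcd}(r',s')$. Conversely, inverting the relation gives $(r,s) = (r',s')\sigma$, so the same argument with $\sigma$ in place of $\sigma^{-1}$ yields $\operatorname{gcd}(r',s') \mid G$. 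Therefore $\operatorname{gcd}(r',s') = \operatorname{gcd}(r,s)$.

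Finally I would address the residual ambiguity in the definition of $G$. The pair $(r,s)$ is only defined up to a sign change $(r,s) \mapsto (-r,-s)$, but gcd is insensitive to this. The pair $(u,v)$ may vary by integer multiples of $(r,s)$, but this alters only the second row of $\Phi$ and does not enter $G$ at all. Consequently $G$ is a well-defined function of the symmetry, and the computation above shows it is unchanged under passage to an isomorphic symmetry.

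The step I expect to be most delicate is not the gcd computation itself but making sure the first row of $\Phi'$ produced by $\Phi\sigma^{-1}$ really qualifies as the minimal pair associated to $\pi'$ (otherwise the gcd invariance would be a statement about a different object). Fortunately this minimality check is exactly what was verified inside the proof of Proposition~\ref{D_inv}, so I can simply invoke it; no new obstruction appears.
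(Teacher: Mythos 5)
Your proposal is correct and follows essentially the same route as the paper: both reduce to the observation (established in the proof of Proposition~\ref{D_inv}) that $\Phi'=\Phi\sigma^{-1}$ serves as the matrix of integral parameters of the isomorphic symmetry, with the minimality of $(r',s')$ checked there, and then verify $\operatorname{gcd}(r',s')=\operatorname{gcd}(r,s)$ by two divisibility arguments in opposite directions. The only cosmetic difference is that the paper packages the divisibility via the factorization $\Phi=\bigl(\begin{smallmatrix}w & 0\\ 0 & 1\end{smallmatrix}\bigr)\Phi_1'$ while you write the first row of $\Phi\sigma^{-1}$ as explicit integer combinations of $r$ and $s$; these are the same argument.
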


\begin{proof} We need to reproduce the beginning of proof of Proposition \ref{D_inv}. Let us consider again two isomorphic symmetries $\pi$ and $\pi'=\Psi^{-1}\pi\Psi$, with $\Psi\in\operatorname{Aut}\big(\mathbb{C}_q\big[x^{\pm 1},y^{\pm 1}\big]\big)$, $\Psi=\varphi_{\sigma,\mu,\nu}$, with the associated matrix $\sigma=\left(\begin{smallmatrix}k & m\\ l & n\end{smallmatrix}\right)\in {\rm SL}(2,\mathbb{Z})$, so that $\sigma\binom\alpha\beta=\binom{\alpha'}{\beta'}$, with $\binom\alpha\beta$, (respectively, $\binom{\alpha'}{\beta'}$) being the weight constants of~$\pi$ (respectively,~$\pi'$).

Let $\Phi=\left(\begin{smallmatrix}r & s\\ u & v\end{smallmatrix}\right)$ be the matrix of integral parameters for $\pi$. One has
\begin{gather*}
\Phi\sigma^{-1}\binom{\alpha'}{\beta'}=\Phi\binom\alpha\beta=\binom{1}{q^2}.
\end{gather*}
That is, the integral matrix $\Phi'\stackrel{\mathrm{def}}{=}\Phi\sigma^{-1}=\left(\begin{smallmatrix}r' & s'\\ u' & v'\end{smallmatrix}\right)$ is formed by the integral parameters of the symmetry $\pi'$ subject to all the necessary relations. One can now verify the minimality condition for $r'$, $s'$ exactly as it was done in the proof of Proposition \ref{D_inv}.

Let $w=\operatorname{gcd}(r,s)$, then $r=wr_1$, $s=ws_1$, with $r_1$, $s_1$ being coprime. One has $\Phi=\left(\begin{smallmatrix}w & 0\\ 0 & 1\end{smallmatrix}\right)\Phi_1'$, where $\Phi_1'=\left(\begin{smallmatrix}r_1 & s_1\\ u & v\end{smallmatrix}\right)$.

It follows that
\begin{gather*}
\Phi'=\Phi\sigma^{-1}=\begin{pmatrix}w & 0\\ 0 & 1\end{pmatrix}\Phi_1'\sigma^{-1},
\end{gather*}
whence $\operatorname{gcd}(r',s')\ge w=\operatorname{gcd}(r,s)$.

One can readily get the opposite inequality via applying this argument to the reverse passage from $\pi'$ to $\pi$. Thus, we finally conclude that $\operatorname{gcd}(r',s')=\operatorname{gcd}(r,s)$.
\end{proof}

\begin{Proposition}A $($non-degenerate$)$ integral matrix $\Phi=\left(\begin{smallmatrix}r & s\\ u & v\end{smallmatrix}\right)$ of integral parameters for a~type~$(IV)$ symmetry~$\pi$ determines $($via \eqref{*}$)$ an onto endomorphism of the multiplicative group $(\mathbb{C}{\setminus}\{0\})^2$. The equation~\eqref{wce} has finitely many solutions. Every solution $\binom{\alpha}{\beta}$, in terms of the matrix elements $u$, $v$, $r$, $s$ and the discriminant $D=\det\Phi$, is subject to
\begin{gather}\label{abgf}
\alpha^D=q^{-2s},\qquad\beta^D=q^{2r}.
\end{gather}
\end{Proposition}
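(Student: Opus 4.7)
The proof splits naturally into three independent pieces corresponding to the three assertions of the proposition, and each follows from standard facts about the divisible abelian group $(\mathbb{C}\setminus\{0\})^2$ combined with multiplicative manipulation of the defining relation \eqref{wce}.

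First, for surjectivity of $\Phi$ I would exploit the fact that $\mathbb{C}\setminus\{0\}$ is a divisible group, so every element admits a $D$-th root. Given any target $\binom{\gamma}{\delta}\in(\mathbb{C}\setminus\{0\})^2$, pick arbitrary $D$-th roots $\gamma^{1/D}$, $\delta^{1/D}$ and set $\mu=\gamma^{v/D}\delta^{-s/D}$, $\nu=\gamma^{-u/D}\delta^{r/D}$, reading these as products of (chosen) $D$-th roots raised to the appropriate integer powers. A direct substitution using $rv-su=D$ shows $\mu^r\nu^s=\gamma$ and $\mu^u\nu^v=\delta$, proving that $\Phi$ maps onto $(\mathbb{C}\setminus\{0\})^2$. (Equivalently, this amounts to inverting $\Phi$ after adjoining all $D$-th roots, which already live in $\mathbb{C}\setminus\{0\}$.)

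Second, for finiteness of the solution set of \eqref{wce} I would analyze the kernel of $\Phi$. If $\binom{\mu}{\nu}$ lies in the kernel, i.e., $\mu^r\nu^s=1$ and $\mu^u\nu^v=1$, then raising the first relation to the $v$-th power and the second to the $(-s)$-th power and multiplying yields $\mu^{rv-su}=\mu^D=1$; similarly $\nu^D=1$. Thus the kernel is contained in the finite group $(\mu_D)^2$ of pairs of $D$-th roots of unity, so it has at most $D^2$ elements. Since the full solution set of \eqref{wce} is either empty or a coset of $\ker\Phi$, it is in any case finite.

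Third, for the distinguished relations \eqref{abgf} I would simply repeat the elimination just used, but applied to the system $\alpha^r\beta^s=1$, $\alpha^u\beta^v=q^2$ coming from \eqref{wce}. Raising the first equation to $v$ and the second to $-s$ and multiplying eliminates~$\beta$ and gives $\alpha^{rv-su}=\alpha^D=q^{-2s}$; raising the first to $-u$ and the second to $r$ eliminates~$\alpha$ and gives $\beta^{rv-su}=\beta^D=q^{2r}$. This is a purely multiplicative Cramer-type computation.

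The only step requiring a moment of care is the first one, since one must check that the formal expressions for $\mu$, $\nu$ with fractional exponents genuinely live in $\mathbb{C}\setminus\{0\}$; this is precisely the point at which divisibility of the multiplicative group is used, and it is what prevents an analogous statement over, say, $\mathbb{Z}$. Everything else is elementary manipulation.
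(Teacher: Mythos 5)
Your proposal is correct and follows essentially the same route as the paper: the paper also forms the adjugate matrix $\Phi'=\left(\begin{smallmatrix}v & -s\\ -u & r\end{smallmatrix}\right)$ with $\Phi\Phi'=\Phi'\Phi=D\cdot I$, obtains surjectivity by applying $\Phi'$ to chosen $D$-th roots (your explicit formulas for $\mu$, $\nu$ are exactly $\Phi'$ evaluated on those roots), notes the kernel lies in the square of the group of $D$-th roots of unity, and derives \eqref{abgf} by applying $\Phi'$ to \eqref{wce}. Your coordinate-wise eliminations are just the entries of these matrix identities written out, so nothing is missing.
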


\begin{proof} The Cramer theorem allows one to form an integral matrix $\Phi'$ such that $\Phi\Phi'=\Phi'\Phi=\left(\begin{smallmatrix}D & 0\\ 0 & D\end{smallmatrix}\right)$, where $D=\det\Phi=rv-su\ne 0$. Explicitly $\Phi'=\left(\begin{smallmatrix}v & -s\\ -u & r\end{smallmatrix}\right)$. Given arbitrary $\binom{\mu}{\nu}\in(\mathbb{C}{\setminus}\{0\})^2$, one chooses $\mu'$, $\nu'$ with $\mu'^D=\mu$, $\nu'^D=\nu$. Then $\Phi'\binom{\mu'}{\nu'}$ is a preimage for $\binom{\mu}{\nu}$ with respect to the endomorphism determined by $\Phi$, because
\begin{gather*}
\Phi\Phi'\binom{\mu'}{\nu'}=\begin{pmatrix}D & 0\\ 0 & D\end{pmatrix}\binom{\mu'}{\nu'}=\binom{\mu}{\nu}.
\end{gather*}
Hence the endomorphism in question is onto. Also, a routine argument establishes that the kernel of this map is contained in $\Gamma^2\subset(\mathbb{C}{\setminus}\{0\})^2$, the Cartesian square of the group $\Gamma$ of $D$-th roots of $1$, hence finite.

An application of $\Phi'$ to \eqref{wce} produces \eqref{abgf}.
\end{proof}

Let us consider a non-zero weight polynomial $p\in\mathbb{C}_q\big[x^{\pm 1},y^{\pm 1}\big]$ with respect to a symmet\-ry~$\pi$, whose integral parameters are $r$, $s$, $u$, $v$. Due to the above observations, $p$ is a finite sum of the form $p=\sum\limits_{w\in\mathbb{Z}}d_wx^{m+wr}y^{n+ws}$, so that the weight of $p$ is $\alpha^m\beta^n$. We need to consider its {\it extreme monomials}, namely the maximum and minimum ones, which correspond, respectively, to $\max\{w\colon d_w\ne 0\}$ and $\min\{w\colon d_w\ne 0\}$. The associated values of $w$ will be denoted by $\operatorname{maxind}_{m,n}(p)$ and $\operatorname{minind}_{m,n}(p)$. This certainly assumes that the pair $(m,n)$ is fixed.

Consider the polynomials $\pi(\mathsf{e})(x)$, $\pi(\mathsf{e})(y)$, $\pi(\mathsf{f})(x)$, $\pi(\mathsf{f})(y)$, related to the symmetry $\pi$. It follows from the commutation relations \eqref{ke}, \eqref{kf} that these are weight polynomials with weights $q^2\alpha$, $q^2\beta$, $q^{-2}\alpha$, $q^{-2}\beta$, respectively. Furthermore, the discussion at the beginning of this Section allows one to conclude that these should be finite sums of the form
\begin{alignat}{3}
&\pi(\mathsf{e})(x)=\sum_{w\in\mathbb{Z}}a_w'x^{u+1+wr}y^{v+ws}, \qquad && \pi(\mathsf{e})(y) =\sum_{w\in\mathbb{Z}}a_w''x^{u+wr}y^{v+1+ws}, & \label{exy1}\\
& \pi(\mathsf{f})(x) =\sum_{t\in\mathbb{Z}}c_t'x^{-u+1+tr}y^{-v+ts},\qquad && \pi(\mathsf{f})(y) =\sum_{t\in\mathbb{Z}}c_t''x^{-u+tr}y^{-v+1+ts}. & \label{fxy1}
\end{alignat}

It should be emphasized that the above notions of extreme monomials and the values \linebreak $\operatorname{maxind}_{m,n}(p)$ and $\operatorname{minind}_{m,n}(p)$ are well defined only for {\it non-zero} polynomials $p$. We need to consider similar notions in a way more closely related to a (non-generic) $U_q(\mathfrak{sl}_2)$-symmet\-ry~$\pi$ on $\mathbb{C}_q\big[x^{\pm 1},y^{\pm 1}\big]$. Namely, given such symmetry $\pi$, we consider the double (pair of) polynomial(s) $(\pi(\mathsf{e})(x),\pi(\mathsf{e})(y))\in\mathbb{C}_q\big[x^{\pm 1},y^{\pm 1}\big]\oplus\mathbb{C}_q\big[x^{\pm 1},y^{\pm 1}\big]$. This may be treated as a (finite) sum
\begin{gather*}
(\pi(\mathsf{e})(x),\pi(\mathsf{e})(y))=
\sum_{w\in\mathbb{Z}}\big(a_w'x^{u+1+wr}y^{v+ws},a_w''x^{u+wr}y^{v+1+ws}\big)
\end{gather*}
of double monomials with the coefficients $a_w'$, $a_w''$ as in \eqref{exy1}, \eqref{fxy1}.

Let us introduce the notation
\begin{gather*}
\operatorname{minind}_\mathsf{e}(\pi) =\min\{w\in\mathbb{Z}\,|\,(a_w',a_w'')\ne(0,0)\}, \\
 \operatorname{maxind}_\mathsf{e}(\pi) =\max\{w\in\mathbb{Z}\,|\,(a_w',a_w'')\ne(0,0)\}.
\end{gather*}

In a similar way we consider the double polynomial $(\pi(\mathsf{f})(x),\pi(\mathsf{f})(y))\in\mathbb{C}_q\big[x^{\pm 1},y^{\pm 1}\big]\oplus\mathbb{C}_q\big[x^{\pm 1},y^{\pm 1}\big]$ and introduce
\begin{gather*}
\operatorname{minind}_\mathsf{f}(\pi) =\min\{t\in\mathbb{Z}\,|\,(c_t',c_t'')\ne(0,0)\},\\
\operatorname{maxind}_\mathsf{f}(\pi) =\max\{t\in\mathbb{Z}\,|\,(c_t',c_t'')\ne(0,0)\}.
\end{gather*}

The four values $\operatorname{minind}_\mathsf{e}(\pi)$, $\operatorname{maxind}_\mathsf{e}(\pi)$, $\operatorname{minind}_\mathsf{f}(\pi)$, $\operatorname{maxind}_\mathsf{f}(\pi)$ are well defined when we are in the context of non-generic symmetries.

There exist certain dependencies between the constant coefficients in \eqref{exy1}, \eqref{fxy1}. To clarify them, we need the following lemma based on~\eqref{qpr}.

\begin{Lemma}[{\cite[Lemma 4.2]{S}}]\label{ratios} Let $\pi$ be a $U_q(\mathfrak{sl}_2)$-symmetry on $\mathbb{C}_q\big[x^{\pm 1},y^{\pm 1}\big]$ such that
\begin{alignat}{3}
&\pi(\mathsf{k})(x)=\alpha x,\qquad && \pi(\mathsf{k})(y) =\beta y, &\notag\\
& \pi(\mathsf{e})(x) =\sum_{i,j}a_{i,j}x^iy^j, \qquad && \pi(\mathsf{e})(y) =\sum_{i,j}b_{i,j}x^iy^j, &\label{ex_y}\\
& \pi(\mathsf{f})(x) =\sum_{i,j}c_{i,j}x^iy^j,\qquad && \pi(\mathsf{f})(y) =\sum_{i,j}d_{i,j}x^iy^j, &\label{fx_y}
\end{alignat}
with $\alpha,\beta\in\mathbb{C}{\setminus}\{0\}$, $a_{i,j},b_{i,j},c_{i,j},d_{i,j}\in\mathbb{C}$, and the above sums being finite. Then
\begin{gather*}
a_{i+1,j}\big(q^i-\beta\big) =b_{i,j+1}\big(1-\alpha q^j\big),\\
 c_{i+1,j}\big(1-\beta^{-1}q^i\big) =d_{i,j+1}\big(q^j-\alpha^{-1}\big).
\end{gather*}
\end{Lemma}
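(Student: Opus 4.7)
The strategy is to apply $\pi(\mathsf{e})$ (respectively $\pi(\mathsf{f})$) to the quantum plane relation \eqref{qpr}, namely $yx = qxy$, and to compare coefficients of monomials on both sides. Consistency of $\pi$ with this relation, together with the coproduct formulas \eqref{def}, \eqref{def1} and the action of $\mathsf{k}$, $\mathsf{k}^{-1}$, forces precisely the two identities in the statement.

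More specifically, using \eqref{def} and Definition \ref{symdef}(i), I would compute
\begin{gather*}
\pi(\mathsf{e})(yx) = y\cdot\pi(\mathsf{e})(x) + \pi(\mathsf{e})(y)\cdot\pi(\mathsf{k})(x) = y\cdot\pi(\mathsf{e})(x) + \alpha\,\pi(\mathsf{e})(y)\cdot x,\\
\pi(\mathsf{e})(qxy) = qx\cdot\pi(\mathsf{e})(y) + q\,\pi(\mathsf{e})(x)\cdot\pi(\mathsf{k})(y) = qx\cdot\pi(\mathsf{e})(y) + q\beta\,\pi(\mathsf{e})(x)\cdot y.
\end{gather*}
Substituting the expansions \eqref{ex_y} and using the elementary identities $y\cdot x^iy^j = q^i x^i y^{j+1}$ and $x^iy^j\cdot x = q^j x^{i+1}y^j$ (derived by iterating \eqref{qpr}), I would extract the coefficient of an arbitrary monomial $x^{I}y^{J}$ on both sides. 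This yields
\begin{gather*}
q^{I}a_{I,J-1} + \alpha q^{J}b_{I-1,J} \;=\; q\beta\, a_{I,J-1} + q\, b_{I-1,J},
\end{gather*}
which after dividing by $q$ and reindexing $I=i+1$, $J=j+1$ becomes the first claimed identity.

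For the second identity I would repeat the procedure with $\mathsf{f}$, this time using \eqref{def1}, so that
\begin{gather*}
\pi(\mathsf{f})(yx) = \pi(\mathsf{f})(y)\cdot x + \pi(\mathsf{k}^{-1})(y)\cdot\pi(\mathsf{f})(x) = \pi(\mathsf{f})(y)\cdot x + \beta^{-1}y\cdot\pi(\mathsf{f})(x),
\end{gather*}
and symmetrically for $\pi(\mathsf{f})(qxy)$, then compare monomial coefficients via \eqref{fx_y}. The algebraic manipulation is formally parallel to the $\mathsf{e}$-case, with $\alpha,\beta$ replaced by $\alpha^{-1},\beta^{-1}$ in appropriate positions, producing the second identity.

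The only point requiring any care is bookkeeping: keeping track of the index shift $i\mapsto i+1$, $j\mapsto j+1$ and of the factors of $q$ arising from commuting powers of $y$ past powers of $x$. There is no genuine obstacle — the result is a direct consequence of the compatibility of $\pi(\mathsf{e})$ and $\pi(\mathsf{f})$, acting as twisted derivations dictated by the coproduct, with the single defining relation of the quantum plane.
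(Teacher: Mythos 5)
Your proposal is correct, and it is exactly the intended argument: the paper cites this lemma from \cite{S} without reproducing the proof, but explicitly notes that it is ``based on \eqref{qpr}'', i.e., obtained by applying $\pi(\mathsf{e})$ and $\pi(\mathsf{f})$ via the coproducts \eqref{def}, \eqref{def1} to the relation $yx=qxy$ and matching coefficients of $x^{i+1}y^{j+1}$, which is precisely what you do. Your coefficient bookkeeping checks out on both identities, so there is nothing to add.
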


An application of Lemma \ref{ratios} allows one to establish the relations between $a_w'$, $a_w''$ and between~$c_w'$,~$c_w''$. Namely, we substitute $a_w'=a_w(1-\alpha q^{v+ws})$, $a_w''=a_w(q^{u+wr}-\beta)$ and rewrite~\eqref{exy1},~\eqref{fxy1} in the form
\begin{gather}
\pi(\mathsf{e})(x) =\sum_{w\in\mathbb{Z}}a_w\big(1-\alpha q^{v+ws}\big)x^{u+1+wr}y^{v+ws},\label{ex2}\\
\pi(\mathsf{e})(y) =\sum_{w\in\mathbb{Z}}a_w\big(q^{u+wr}-\beta\big)x^{u+wr}y^{v+1+ws}, \label{ey2}\\
\pi(\mathsf{f})(x) =\sum_{t\in\mathbb{Z}}c_t\big(\alpha^{-1}-q^{-v+ts}\big)x^{-u+1+tr}y^{-v+ts}, \label{fx2}\\
\pi(\mathsf{f})(y) =\sum_{t\in\mathbb{Z}}c_t\big(\beta^{-1}q^{-u+tr}-1\big)x^{-u+tr}y^{-v+1+ts}. \label{fy2}
\end{gather}

To clarify the relations between $a_w$ and $c_t$ required to write down the existing series of non-generic symmetries, we need to collect several properties of extreme monomials of the weight polynomials \eqref{ex2}--\eqref{fy2}.

\begin{Lemma}\label{ac_0}
\begin{alignat}{3}
&a_w=0 \qquad && \text{for} \quad w >\operatorname{maxind}_\mathsf{e}(\pi),&\label{awmax}\\
& a_w =0 \qquad &&\text{for} \quad w <\operatorname{minind}_\mathsf{e}(\pi),&\label{awmin}\\
& c_t =0 \qquad &&\text{for} \quad t >\operatorname{maxind}_\mathsf{f}(\pi),&\label{ctmax}\\
& c_t =0 \qquad && \text{for} \quad t <\operatorname{minind}_\mathsf{f}(\pi). &\label{ctmin}
\end{alignat}
\end{Lemma}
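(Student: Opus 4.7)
My strategy would be to show that the coefficient $a_w$ (respectively, $c_t$) is uniquely reconstructible from the double-monomial datum $(a_w',a_w'')$ (respectively, $(c_t',c_t'')$) via the substitutions that lead to \eqref{ex2}--\eqref{fy2}. Granted this, the lemma is immediate from the definitions of $\operatorname{maxind}_\mathsf{e}(\pi)$, $\operatorname{minind}_\mathsf{e}(\pi)$, $\operatorname{maxind}_\mathsf{f}(\pi)$, $\operatorname{minind}_\mathsf{f}(\pi)$: outside these bounds one has $(a_w',a_w'')=(0,0)$ or $(c_t',c_t'')=(0,0)$, which forces $a_w=0$ or $c_t=0$ respectively.

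The key technical step I would verify first is the following non-vanishing claim: for every $w\in\mathbb{Z}$ at least one of the two factors $1-\alpha q^{v+ws}$ and $q^{u+wr}-\beta$ is non-zero. If both vanished, then $\alpha=q^{-v-ws}$ and $\beta=q^{u+wr}$, so that
\begin{equation*}
\alpha^r\beta^s = q^{r(-v-ws)+s(u+wr)} = q^{us-rv} = q^{-D}.
\end{equation*}
By definition of the integral parameters of $\pi$ one has $\alpha^r\beta^s=1$, hence $q^{-D}=1$; since $q$ is not a root of unity, this forces $D=0$, contradicting the non-generic hypothesis $D\ne 0$. An entirely parallel calculation for the pair of factors $\alpha^{-1}-q^{-v+ts}$ and $\beta^{-1}q^{-u+tr}-1$ appearing in \eqref{fx2}, \eqref{fy2} gives $\alpha^r\beta^s=q^D\ne 1$, the same contradiction, ensuring the analogous non-vanishing for each $t\in\mathbb{Z}$.

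Once non-vanishing is in hand, $a_w$ is read off from $(a_w',a_w'')$ by dividing whichever of $a_w'$, $a_w''$ corresponds to a non-zero factor, so $(a_w',a_w'')=(0,0)$ forces $a_w=0$; this yields both \eqref{awmax} and \eqref{awmin}. The proofs of \eqref{ctmax} and \eqref{ctmin} proceed identically using the substitutions of \eqref{fx2}, \eqref{fy2}. The main (in fact only) obstacle is spotting that it is precisely the non-generic hypothesis $D\ne 0$, combined with the minimality of $(r,s)$ which yields $\alpha^r\beta^s=1$, that pins down this non-vanishing; after that the proof reduces to bookkeeping.
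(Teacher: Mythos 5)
Your proof is correct and follows essentially the same route as the paper: both reduce the lemma to showing that the two multipliers $1-\alpha q^{v+ws}$ and $q^{u+wr}-\beta$ (respectively $\alpha^{-1}-q^{-v+ts}$ and $\beta^{-1}q^{-u+tr}-1$) cannot vanish simultaneously, and derive a contradiction with $D\ne 0$. The only difference is cosmetic: the paper substitutes the hypothetical values of $\alpha$, $\beta$ into \eqref{abgf} and manipulates the resulting linear relations, whereas you plug them directly into $\alpha^r\beta^s=1$ and obtain $q^{\mp D}=1$ at once --- a slightly shorter computation reaching the same conclusion.
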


\begin{proof}To prove \eqref{awmax} and \eqref{awmin}, it suffices, in view of our definitions, to verify that the multipliers $1-\alpha q^{v+ws}$ in \eqref{ex2} and $q^{u+wr}-\beta$ in~\eqref{ey2} can not be zero simultaneously (with the same $w$).

Assuming the contrary, we get $\alpha=q^{-v-ws}$, $\beta=q^{u+wr}$ for some~$w$. Under our assumption on~$q$, this, being substituted into~\eqref{abgf}, implies
\begin{gather*}
D(u+wr) =2r, \qquad D(v+ws) =2s.
\end{gather*}
This can be readily rewritten in the form
\begin{gather*}
Du =(2-wD)r,\qquad Dv=(2-wD)s,
\end{gather*}
and since the discriminant $D\ne 0$, this implies $D=\det\Phi=0$. The contradiction we get this way proves our claim.

\eqref{ctmax}, \eqref{ctmin} can be proved in a similar way.
\end{proof}

One more application of the argument, used in the proof of Lemma \ref{ac_0}, together with our definitions, yields

\begin{Lemma}\label{ecnz}
In \eqref{ex2}, \eqref{ey2} one has
\begin{gather*}
a_{\operatorname{maxind}_\mathsf{e}(\pi)}\ne 0,\qquad a_{\operatorname{minind}_\mathsf{e}(\pi)}\ne 0.
\end{gather*}
In \eqref{fx2}, \eqref{fy2} one has
\begin{gather*}
c_{\operatorname{maxind}_\mathsf{f}(\pi)}\ne 0,\qquad c_{\operatorname{minind}_\mathsf{f}(\pi)}\ne 0.
\end{gather*}
\end{Lemma}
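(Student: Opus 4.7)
The plan is to exploit the substitutions
\begin{gather*}
a_w'=a_w\big(1-\alpha q^{v+ws}\big),\qquad a_w''=a_w\big(q^{u+wr}-\beta\big),\\
c_t'=c_t\big(\alpha^{-1}-q^{-v+ts}\big),\qquad c_t''=c_t\big(\beta^{-1}q^{-u+tr}-1\big)
\end{gather*}
introduced between \eqref{exy1}, \eqref{fxy1} and \eqref{ex2}--\eqref{fy2}, and to notice that in each pair the scalar $a_w$ (respectively $c_t$) appears as a common factor of the two components. The statement will then be an almost immediate consequence of the definitions of the four indices $\operatorname{minind}_\mathsf{e}(\pi)$, $\operatorname{maxind}_\mathsf{e}(\pi)$, $\operatorname{minind}_\mathsf{f}(\pi)$, $\operatorname{maxind}_\mathsf{f}(\pi)$, applied in the direction opposite to that of the proof of Lemma~\ref{ac_0}.

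First I would set $w_0:=\operatorname{maxind}_\mathsf{e}(\pi)$ and recall that, by the very definition, $w_0$ is the largest integer with $(a_{w_0}',a_{w_0}'')\ne(0,0)$; in particular at least one of the two components is non-zero. If one had $a_{w_0}=0$, however, then the common-factor form of the substitution would force both $a_{w_0}'=0$ and $a_{w_0}''=0$, contradicting the defining property of $w_0$. Hence $a_{w_0}\ne 0$. The same reasoning, applied verbatim, gives $a_{\operatorname{minind}_\mathsf{e}(\pi)}\ne 0$, as well as $c_{\operatorname{maxind}_\mathsf{f}(\pi)}\ne 0$ and $c_{\operatorname{minind}_\mathsf{f}(\pi)}\ne 0$ after swapping $(a_w',a_w'')$ for $(c_t',c_t'')$ and referring to \eqref{fx2}, \eqref{fy2} in place of \eqref{ex2}, \eqref{ey2}.

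I do not expect any real obstacle; the lemma is essentially a bookkeeping consequence of the notation. Note that, contrary to the proof of Lemma~\ref{ac_0}, one does \emph{not} need to invoke the non-simultaneous vanishing of the two scalar multipliers $1-\alpha q^{v+ws}$ and $q^{u+wr}-\beta$ (and the analogous statement for $c_t',c_t''$); the argument here runs in the easy direction, exploiting only the presence of the common factor $a_w$ (respectively $c_t$). This matches the authors' remark immediately preceding the lemma that \textquotedblleft one more application of the argument used in the proof of Lemma~\ref{ac_0}, together with our definitions, yields\textquotedblright\ the claim.
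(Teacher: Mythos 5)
Your argument is correct and is essentially the paper's intended proof: the paper gives no details beyond ``one more application of the argument used in the proof of Lemma~\ref{ac_0}, together with our definitions,'' and your unpacking via the common factor $a_w$ (resp.\ $c_t$) in the pair $(a_w',a_w'')$ (resp.\ $(c_t',c_t'')$) is the natural way to carry this out. One small caveat on your closing remark: it is not quite accurate to say that the non-simultaneous vanishing of the two multipliers plays no role. The passage from \eqref{exy1}, \eqref{fxy1} to the parametrized form \eqref{ex2}--\eqref{fy2} --- i.e.\ the very existence of a scalar $a_w$ with $a_w'=a_w\big(1-\alpha q^{v+ws}\big)$ and $a_w''=a_w\big(q^{u+wr}-\beta\big)$ for every $w$ with $(a_w',a_w'')\ne(0,0)$ --- is legitimate precisely because the two multipliers cannot vanish simultaneously (otherwise Lemma~\ref{ratios} would impose no constraint and a non-zero pair $(a_w',a_w'')$ need not be of that form). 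So that fact is used implicitly when you take \eqref{ex2}--\eqref{fy2} as given; granted that, your one-line deduction $a_{w_0}=0\Rightarrow(a_{w_0}',a_{w_0}'')=(0,0)$ is exactly right, and likewise for the $c_t$.
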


We also need the following relations valid in our present context ($\sigma=I$):
\begin{gather}
\pi(\mathsf{e})(x^p)=\sum_{i,j}a_{i,j}\frac{\alpha^pq^{jp}-1}{\alpha q^j-1}x^{p-1+i}y^j, \label{exp}\\
\pi(\mathsf{e})(y^p)=\sum_{i,j}b_{i,j}\frac{\beta^p-q^{ip}}{\beta-q^i}x^iy^{p-1+j}, \label{eyp}\\
\pi(\mathsf{f})(x^p)=\sum_{i,j}c_{i,j}\frac{\alpha^{-p}-q^{jp}}{\alpha^{-1}-q^j}x^{p-1+i}y^j, \label{fxp}\\
\pi(\mathsf{f})(y^p)=\sum_{i,j}d_{i,j}\frac{\beta^{-p}q^{ip}-1}{\beta^{-1}q^i-1}x^iy^{p-1+j}, \label{fyp}
\end{gather}
where $p\in\mathbb{Z}$, $a_{i,j},b_{i,j},c_{i,j},d_{i,j}\in\mathbb{C}$ are as in~\eqref{ex_y},~\eqref{fx_y}. These relations are due to a straightforward induction argument, together with~\eqref{ext_e},~\eqref{ext_f}. Of course, these are valid unless some special values for the weight constants $\alpha$ and/or $\beta$ make the denominators of fractions involved to be zero. In fact we will not encounter such special cases in what follows, so we need not take care about replacing these fractions (which are just sums of certain progressions) to attain more generality in \eqref{exp}--\eqref{fyp}.

We demonstrate here \eqref{exp} for the reader's convenience, assuming $\alpha q^j-1\ne 0$. Clearly with $p=0$ we get $\pi(\mathsf{e})(1)=0$ as one should expect. Substituting $p=1$, we get just~\eqref{ex_y}. With $p>0$, let us perform the induction step.
\begin{gather*}
\pi(\mathsf{e})\big(x^{p+1}\big)= x^p\pi(\mathsf{e})(x)+\pi(\mathsf{e})\big(x^p\big)\pi(\mathsf{k})(x)\\
\hphantom{\pi(\mathsf{e})\big(x^{p+1}\big)}{} =x^p\sum_{i,j}a_{i,j}x^iy^j+\sum_{i,j}a_{i,j}\frac{\alpha^pq^{jp}-1}{\alpha q^j-1}x^{p-1+i}y^j\alpha x\\
\hphantom{\pi(\mathsf{e})\big(x^{p+1}\big)}{} =\sum_{i,j}a_{i,j}\left(1+\frac{\alpha^pq^{jp}-1}{\alpha q^j-1}\alpha q^j\right)x^{p+i}y^j=\sum_{i,j}a_{i,j}\frac{\alpha^{p+1}q^{j(p+1)}-1}{\alpha q^j-1}x^{p+i}y^j.
\end{gather*}
This proves \eqref{exp} for $p\ge 0$. To cover the negative powers, we set $p>0$ and apply \eqref{ext_e} as follows
\begin{gather*}
\pi(\mathsf{e})\big(x^{-p}\big)= -x^{-p}\pi(\mathsf{e})(x^p)\pi(\mathsf{k})\big(x^p\big)^{-1}=
-x^{-p}\sum_{i,j}a_{i,j}\frac{\alpha^pq^{jp}-1}{\alpha q^j-1}x^{p-1+i}y^j\alpha^{-p}x^{-p}\\
\hphantom{\pi(\mathsf{e})\big(x^{-p}\big)}{} =-\sum_{i,j}a_{i,j}\frac{\alpha^pq^{jp}-1}{\alpha q^j-1}\alpha^{-p}q^{-jp}x^{-p-1+i}y^j=
\sum_{i,j}a_{i,j}\frac{\alpha^{-p}q^{-jp}-1}{\alpha q^j-1}x^{-p-1+i}y^j,
\end{gather*}
which finishes the proof of \eqref{exp}.

To use the relation \eqref{effe} for computing non-generic symmetries, we need certain estimates for extreme monomials of the polynomials produced by composed operators like $\pi(\mathsf{ef})$ applied to the generators~$x$ and $y$. The estimates presented in the next two lemmas, of course, do not involve (the r.h.s.\ of)~\eqref{effe} itself.

\begin{Lemma}\label{ef-fe}
In our previous notation
\begin{gather}
\pi(\mathsf{ef-fe})(x) =\sum_i\sum_{w+t=i}a_wc_tq^{(u+wr)(-v+ts)} \big(\alpha q^{is}-\alpha^{-1}\big)\big(q^{-2+iD}-1\big)x^{1+ir}y^{is}, \label{(ef-fe)x}\\
 \pi(\mathsf{ef-fe})(y) =\sum_i\sum_{w+t=i}a_wc_tq^{(u+wr)(-v+ts)} \big(\beta^{-1}q^{ir}-\beta\big) \big(1-q^{-2+iD}\big)x^{ir}y^{1+is}. \label{(ef-fe)y}
\end{gather}
\end{Lemma}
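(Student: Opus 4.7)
The plan is to compute $\pi(\mathsf{ef})(x)$ and $\pi(\mathsf{fe})(x)$ directly from the explicit formulas \eqref{ex2}--\eqref{fy2} and then subtract. For $\pi(\mathsf{ef})(x)=\pi(\mathsf{e})(\pi(\mathsf{f})(x))$, I start from \eqref{fx2} and apply $\pi(\mathsf{e})$ to each monomial $x^{-u+1+tr}y^{-v+ts}$. The coproduct \eqref{def} yields the product rule
\begin{gather*}
\pi(\mathsf{e})\big(x^{p}y^{q}\big)=x^{p}\pi(\mathsf{e})\big(y^{q}\big)+\pi(\mathsf{e})\big(x^{p}\big)\cdot\pi(\mathsf{k})\big(y^{q}\big),
\end{gather*}
whose right-hand side is evaluated via the monomial formulas \eqref{exp} and \eqref{eyp}. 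Rewriting the inner coefficients $a_{i,j}$, $b_{i,j}$ appearing there back in terms of $a_{w}$ through \eqref{ex2}, \eqref{ey2} turns the result into a genuine double sum indexed by the pair $(w,t)$. An analogous procedure using $\Delta(\mathsf{f})$ from \eqref{def1} together with \eqref{fxp}, \eqref{fyp} delivers $\pi(\mathsf{fe})(x)$.

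Next I reparametrize by the diagonal index $i=w+t$. Because $\mathsf{ef}-\mathsf{fe}$ commutes with $\mathsf{k}$, every monomial contributing to $\pi(\mathsf{ef}-\mathsf{fe})(x)$ has the same weight as $x$, namely $\alpha$, and the description of weight polynomials in terms of the minimal pair $(r,s)$ forces each such monomial to have the form $x^{1+ir}y^{is}$. The $q$-factors $q^{(u+wr)(-v+ts)}$ emerge naturally when products of the type $y^{v+ws}\cdot x^{-u+tr}$ are brought into normal order via \eqref{qpr}. The telescoping numerators $\alpha^{p}q^{jp}-1$ and $\beta^{q}-q^{iq}$ from \eqref{exp}, \eqref{eyp} pair up with the companion factors $(1-\alpha q^{v+ws})$ and $(\alpha^{-1}-q^{-v+ts})$ from \eqref{ex2}, \eqref{fx2}, so that the fractional denominators $\alpha q^{j}-1$ and $\beta-q^{i}$ all cancel before any simplification of the numerators begins.

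The step I expect to be hardest is the algebraic simplification after the subtraction. Each of the identities $\alpha^{r}\beta^{s}=1$ and $\alpha^{u}\beta^{v}=q^{2}$ from \eqref{wce} (equivalently $\alpha^{D}=q^{-2s}$, $\beta^{D}=q^{2r}$ from \eqref{abgf}) must be applied repeatedly to collapse the $\alpha,\beta$-dependent monomials in the coefficients back to pure $q$-powers. Once this is done, the residual contributions from the $\pi(\mathsf{ef})$-expansion and the $\pi(\mathsf{fe})$-expansion combine into the single product $(\alpha q^{is}-\alpha^{-1})(q^{-2+iD}-1)$ multiplying the scalar $a_{w}c_{t}\,q^{(u+wr)(-v+ts)}$, with the exponent $iD=i(rv-su)=(w+t)(rv-su)$ arising as the sum of the pure $q$-parts of the collapsed factors. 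The computation producing \eqref{(ef-fe)y} is entirely parallel: starting from $\pi(\mathsf{f})(y)$ and $\pi(\mathsf{e})(y)$ and running the same bookkeeping, one obtains the analogous bracket $(\beta^{-1}q^{ir}-\beta)(1-q^{-2+iD})$. No ingredients beyond \eqref{exp}--\eqref{fyp}, the minimality of $(r,s)$, and the weight constraint \eqref{wce} are needed.
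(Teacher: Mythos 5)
Your proposal follows essentially the same route as the paper's proof: compute $\pi(\mathsf{ef})(x)$ and $\pi(\mathsf{fe})(x)$ separately via the coproducts \eqref{def}, \eqref{def1} and the power formulas \eqref{exp}--\eqref{fyp}, substitute the weight-form coefficients from \eqref{ex2}--\eqref{fy2} so that the denominators cancel against the factors $1-\alpha q^{v+ws}$, $q^{u+wr}-\beta$, $\alpha^{-1}-q^{-v+ts}$, and then invoke $\alpha^r\beta^s=1$, $\alpha^u\beta^v=q^2$ to collapse the $\alpha,\beta$-monomials and factor the difference as $(\alpha q^{is}-\alpha^{-1})(q^{-2+iD}-1)$. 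The only caveat is that what you give is a plan rather than the carried-out algebra, but every ingredient you list is exactly the one the paper uses, so the argument goes through as described.
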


\begin{proof} Let us start with computing the polynomial $\pi(\mathsf{ef})(x)$. This involves \eqref{exy1}, \eqref{fxy1}, \eqref{def}, \eqref{def1}, and the notion of weight constants.
\begin{gather*}
\pi(\mathsf{ef})(x)=\sum_tc_t'\pi(\mathsf{e})\big(x^{-u+1+tr}y^{-v+ts}\big)\\
\hphantom{\pi(\mathsf{ef})(x)}{} =\sum_tc_t'\big(x^{-u+1+tr}\pi(\mathsf{e})\big(y^{-v+ts}\big)+\pi(\mathsf{e})\big(x^{-u+1+tr}\big)\pi(\mathsf{k}\big(y^{-v+ts}\big)\big).
\end{gather*}

One has to observe at this point that the replacement here of $\pi(\mathsf{e})\big(y^{-v+ts}\big)$ and $\pi(\mathsf{e})\big(x^{-u+1+tr}\big)$ with sums like~\eqref{exp},~\eqref{eyp} involves the coefficients $a_w'$, $a_w''$ as in \eqref{exy1}. The specific form of the latter coefficients as in \eqref{ex2}, \eqref{ey2}, respectively, implies that $a_w'$ ($a_w''$) appears to be zero when the associated denominator in~\eqref{exp} (\eqref{eyp}) is zero. This special case would require rewri\-ting~\eqref{exp}~(\eqref{eyp}) in a different form (sum of finitely many identical constants). This rewriting appears to be redundant since the corresponding terms in the sums to be substituted are totally absent, thus leaving only terms as in~\eqref{exp}~(\eqref{eyp}) with non-zero denominators.

Now we proceed with computing $\pi(\mathsf{ef})(x)$,
\begin{gather*}
\pi(\mathsf{ef})(x)=\sum_tc_t'\left(x^{-u+1+tr}\sum_wa_w''
\frac{\beta^{-v+ts}-q^{(u+wr)(-v+ts)}}{\beta-q^{(u+wr)}}x^{u+wr}y^{(w+t)s} \right.\\
 \left.\hphantom{\pi(\mathsf{ef})(x)=}{} +\sum_wa_w'\frac{\alpha^{-u+1+tr}q^{(v+ws)(-u+1+tr)}-1}{\alpha q^{v+ws}-1}x^{1+(w+t)r}y^{v+ws}\beta^{-v+ts}y^{-v+ts}\right)\\
\hphantom{\pi(\mathsf{ef})(x)}{} =\sum_w\sum_tc_t'\left(a_w'\beta^{-v+ts} \frac{\alpha^{-u+1+tr}q^{(v+ws)(-u+1+tr)}-1}{\alpha q^{v+ws}-1} \right.\\
 \left. \hphantom{\pi(\mathsf{ef})(x)=}{} +a_w''\frac{\beta^{-v+ts}-q^{(u+wr)(-v+ts)}}{\beta-q^{u+wr}}\right) x^{1+(w+t)r}y^{(w+t)s}.
\end{gather*}
Let us substitute $a_w'=a_w\big(1-\alpha q^{v+ws}\big)$, $a_w''=a_w\big(q^{u+wr}-\beta\big)$ to get
\begin{gather*}
\pi(\mathsf{ef})(x)=\sum_w\sum_ta_wc_t' \big(\beta^{-v+ts}\big(1-\alpha^{-u+1+tr}q^{(v+ws)(-u+1+tr)}\big)\\
\hphantom{\pi(\mathsf{ef})(x)=}{} +q^{(u+wr)(-v+ts)}-\beta^{-v+ts}\big)x^{1+(w+t)r}y^{(w+t)s}\\
\hphantom{\pi(\mathsf{ef})(x)}{} =\sum_w\sum_ta_wc_t'q^{(u+wr)(-v+ts)}\big(1-q^{-2+v+(w+t)D+ws}\alpha\big)x^{1+(w+t)r}y^{(w+t)s}.
\end{gather*}

A similar calculation yields
\begin{gather*}
\pi(\mathsf{fe})(x)=\sum_w\sum_ta_w'c_tq^{(u+wr)(-v+ts)} \big(\alpha^{-1}q^{-2+(w+t)D}-q^{-v+ts}\big)x^{1+(w+t)r}y^{(w+t)s}.
\end{gather*}

Finally we obtain, using $c_t'=c_t(\alpha^{-1}-q^{-v+ts})$ as in \eqref{fx2}:
\begin{gather*}
\pi(\mathsf{ef-fe})(x)=\sum_w\sum_ta_wc_tq^{(u+wr)(-v+ts)} \big(\big(\alpha^{-1}-q^{-v+ts}\big)\big(1-q^{-2+v+(w+t)D+ws}\alpha\big)\\
\hphantom{\pi(\mathsf{ef-fe})(x)=}{} -\big(1-\alpha q^{v+ws}\big)\big(\alpha^{-1}q^{-2+(w+t)D}-q^{-v+ts}\big)\big)x^{1+(w+t)r}y^{(w+t)s}\\
\hphantom{\pi(\mathsf{ef-fe})(x)}{}=\sum_w\sum_ta_wc_tq^{(u+wr)(-v+ts)} \big(q^{-2+(w+t)D}\big(\alpha q^{(w+t)s}-\alpha^{-1}\big)\\
\hphantom{\pi(\mathsf{ef-fe})(x)=}{} +\big(\alpha^{-1}-\alpha q^{(w+t)s}\big)\big)x^{1+(w+t)r}y^{(w+t)s}\\
\hphantom{\pi(\mathsf{ef-fe})(x)}{} =\sum_w\sum_ta_wc_tq^{(u+wr)(-v+ts)}\big(\alpha q^{(w+t)s}-\alpha^{-1}\big)\\
\hphantom{\pi(\mathsf{ef-fe})(x)=}{} \times \big(q^{-2+(w+t)D}-1\big)x^{1+(w+t)r}y^{(w+t)s},
\end{gather*}
which is equivalent to \eqref{(ef-fe)x}. The proof of \eqref{(ef-fe)y} goes in a similar way.
\end{proof}

\begin{Lemma}\label{extr_(ef-fe)}
\begin{gather*}
\operatorname{minind}_{1,0}(\pi(\mathsf{ef}-\mathsf{fe})(x)) \ge\operatorname{minind}_\mathsf{e}(\pi)+\operatorname{minind}_\mathsf{f}(\pi),\\
\operatorname{minind}_{0,1}(\pi(\mathsf{ef}-\mathsf{fe})(y)) \ge\operatorname{minind}_\mathsf{e}(\pi)+\operatorname{minind}_\mathsf{f}(\pi),\\
\operatorname{maxind}_{1,0}(\pi(\mathsf{ef}-\mathsf{fe})(x)) \le\operatorname{maxind}_\mathsf{e}(\pi)+\operatorname{maxind}_\mathsf{f}(\pi),\\
\operatorname{maxind}_{0,1}(\pi(\mathsf{ef}-\mathsf{fe})(y)) \le\operatorname{maxind}_\mathsf{e}(\pi)+\operatorname{maxind}_\mathsf{f}(\pi).
\end{gather*}
\end{Lemma}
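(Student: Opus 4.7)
The plan is to read off the bounds directly from the explicit formulas \eqref{(ef-fe)x}, \eqref{(ef-fe)y} given by Lemma~\ref{ef-fe}, using the vanishing of the coefficients $a_w$ and $c_t$ outside the relevant range as established in Lemma~\ref{ac_0}.

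More precisely, I would observe the following. The right-hand side of \eqref{(ef-fe)x} is a sum of monomials of the form $x^{1+ir}y^{is}$ indexed by $i\in\mathbb{Z}$, with $i$ playing the role of the exponent $w$ in the standard parametrization of weight polynomials with the same weight as $x^1y^0$. Thus $\operatorname{minind}_{1,0}$ and $\operatorname{maxind}_{1,0}$ of $\pi(\mathsf{ef}-\mathsf{fe})(x)$ are, respectively, the smallest and largest values of $i$ for which the coefficient
\[
\sum_{w+t=i}a_wc_tq^{(u+wr)(-v+ts)}\bigl(\alpha q^{is}-\alpha^{-1}\bigr)\bigl(q^{-2+iD}-1\bigr)
\]
is non-zero. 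By Lemma~\ref{ac_0}, the product $a_wc_t$ vanishes whenever $w<\operatorname{minind}_\mathsf{e}(\pi)$ or $w>\operatorname{maxind}_\mathsf{e}(\pi)$, and likewise for $t$ with respect to $\operatorname{minind}_\mathsf{f}(\pi)$, $\operatorname{maxind}_\mathsf{f}(\pi)$.

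Now if $i<\operatorname{minind}_\mathsf{e}(\pi)+\operatorname{minind}_\mathsf{f}(\pi)$, then for every decomposition $w+t=i$ one must have either $w<\operatorname{minind}_\mathsf{e}(\pi)$ or $t<\operatorname{minind}_\mathsf{f}(\pi)$, so every summand vanishes and the coefficient of $x^{1+ir}y^{is}$ is zero. This yields the first inequality; the analogous argument with inequalities reversed yields the bound for $\operatorname{maxind}_{1,0}(\pi(\mathsf{ef}-\mathsf{fe})(x))$. The proofs of the bounds for $\pi(\mathsf{ef}-\mathsf{fe})(y)$ proceed in exactly the same way from \eqref{(ef-fe)y}.

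There is no real obstacle here: the only minor point to keep in mind is that the values $\operatorname{minind}_{1,0}$ and $\operatorname{maxind}_{1,0}$ are only defined when the polynomials in question are non-zero, but this is implicit in the statement, and the argument above is a straightforward combinatorial consequence of Lemmas~\ref{ef-fe} and~\ref{ac_0}. No use of the explicit structure of the factors $\alpha q^{is}-\alpha^{-1}$ or $q^{-2+iD}-1$ is needed for these inequalities.
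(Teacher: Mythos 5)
Your argument is exactly the one the paper intends: the paper's proof of this lemma is the single line ``an obvious consequence of Lemmas~\ref{ac_0} and~\ref{ef-fe},'' and your expansion (every decomposition $w+t=i$ with $i$ outside the interval forces $a_wc_t=0$ by Lemma~\ref{ac_0}, so the corresponding coefficient in \eqref{(ef-fe)x}, \eqref{(ef-fe)y} vanishes) is the correct way to fill that in. Nothing is missing.
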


\begin{proof} An obvious consequence of Lemmas \ref{ac_0} and~\ref{ef-fe}.
\end{proof}

It follows from Lemma \ref{extr_(ef-fe)} that all the non-zero monomials of the (weight) polynomials $\pi(\mathsf{ef}-\mathsf{fe})(x)$ \eqref{(ef-fe)x} and $\pi(\mathsf{ef}-\mathsf{fe})(y)$ \eqref{(ef-fe)y} should be among those with indices $i$ subject to
\begin{gather*}
\operatorname{minind}_\mathsf{e}(\pi)+
\operatorname{minind}_\mathsf{f}(\pi)\le i\le\operatorname{maxind}_\mathsf{e}(\pi)+
\operatorname{maxind}_\mathsf{f}(\pi).
\end{gather*}
In fact, this admits a further adjustment.

\begin{Lemma}\label{0ep}
Either
\begin{gather*}
\operatorname{minind}_\mathsf{e}(\pi)+\operatorname{minind}_\mathsf{f}(\pi)=0
\qquad \text{or} \qquad \operatorname{maxind}_\mathsf{e}(\pi)+\operatorname{maxind}_\mathsf{f}(\pi)=0.
\end{gather*}
\end{Lemma}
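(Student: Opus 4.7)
My strategy is to extract enough rigidity from the commutator relation \eqref{effe} to force one of the two extreme indices to equal zero. Applying \eqref{effe} to the generators $x,y$ and using the action of $\pi(\mathsf{k})$ yields
\begin{gather*}
\pi(\mathsf{ef}-\mathsf{fe})(x)=\frac{\alpha-\alpha^{-1}}{q-q^{-1}}\,x,\qquad
\pi(\mathsf{ef}-\mathsf{fe})(y)=\frac{\beta-\beta^{-1}}{q-q^{-1}}\,y.
\end{gather*}
Comparing with the explicit expansions \eqref{(ef-fe)x}, \eqref{(ef-fe)y}, and using that $x^{1+ir}y^{is}=x$ (resp.\ $x^{ir}y^{1+is}=y$) forces $i=0$ because $(r,s)\neq(0,0)$, one sees that for every $i\neq 0$ the full coefficient at index $i$ on the right of \eqref{(ef-fe)x}, \eqref{(ef-fe)y} must vanish.

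I then argue by contradiction, assuming that both $I_{-}:=\operatorname{minind}_{\mathsf{e}}(\pi)+\operatorname{minind}_{\mathsf{f}}(\pi)$ and $I_{+}:=\operatorname{maxind}_{\mathsf{e}}(\pi)+\operatorname{maxind}_{\mathsf{f}}(\pi)$ are non-zero. At the extremal index $i=I_{\pm}$ only one pair $(w,t)$ with $w+t=i$ survives, and by Lemma \ref{ecnz} the corresponding coefficient $a_{w}c_{t}q^{(u+wr)(-v+ts)}$ is non-zero. Consequently the scalar prefactors must vanish:
\begin{gather*}
\big(\alpha q^{I_{\pm}s}-\alpha^{-1}\big)\big(q^{I_{\pm}D-2}-1\big)=0,\qquad \big(\beta^{-1}q^{I_{\pm}r}-\beta\big)\big(q^{I_{\pm}D-2}-1\big)=0.
\end{gather*}
If $q^{I_{\pm}D-2}\neq 1$, then $\alpha^{2}=q^{-I_{\pm}s}$ and $\beta^{2}=q^{I_{\pm}r}$; raising these to the $D$-th power and using \eqref{abgf} gives $s(I_{\pm}D-4)=0$ and $r(I_{\pm}D-4)=0$, forcing $I_{\pm}D=4$ since $(r,s)\neq(0,0)$. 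Hence in either case $I_{\pm}D\in\{2,4\}$, and in particular $I_{-}$ and $I_{+}$ share the sign of $D$, so $0\notin[I_{-},I_{+}]$.

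The final step is to observe that only indices $i\in[I_{-},I_{+}]$ contribute on the right-hand side of \eqref{(ef-fe)x}, so none of the surviving monomials equals $x$; comparison with $\frac{\alpha-\alpha^{-1}}{q-q^{-1}}\,x$ therefore forces $\alpha\in\{-1,1\}$, and symmetrically $\beta\in\{-1,1\}$. By Lemma \ref{iz} this forces $\pi(\mathsf{e})\equiv\pi(\mathsf{f})\equiv 0$, contradicting the standing hypothesis of Section \ref{wpem} that $\pi$ is non-generic of type (IV). The main delicacy I anticipate is the second step above, namely turning the two-factor prefactor vanishing into the clean conclusion $I_{\pm}D\in\{2,4\}$: this relies on combining the $x$- and $y$-constraints with \eqref{abgf} to rule out the degenerate possibilities in which $r=0$ or $s=0$ separately could otherwise permit $I_{\pm}D$ to be arbitrary. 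Once this Diophantine reduction is secured, the sign deduction $I_{-}I_{+}>0$ and the concluding appeal to Lemma \ref{iz} are immediate.
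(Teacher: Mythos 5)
Your proof is correct and follows essentially the same route as the paper: the vanishing of the single-product extremal coefficients forces $I_{\pm}D\in\{2,4\}$ via \eqref{abgf}, hence $I_{-}$ and $I_{+}$ share a sign, and the resulting vanishing of the $i=0$ coefficient forces $\alpha,\beta\in\{-1,1\}$, contradicting Lemma~\ref{iz}. The paper merely organizes the same ingredients as two separate cases ($0$ outside the interval \eqref{inzm} versus strictly inside it) instead of your single reductio.
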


\begin{proof} To clarify the position of $0$ with respect to the interval of integers
\begin{gather}\label{inzm}
[\operatorname{minind}_\mathsf{e}(\pi)+\operatorname{minind}_\mathsf{f}(\pi);\operatorname{maxind}_\mathsf{e}(\pi)+\operatorname{maxind}_\mathsf{f}(\pi)],
\end{gather}
let us first assume that $0$ is outside of this interval. In this case the monomial corresponding to $i=0$ in both $\pi(\mathsf{ef-fe})(x)$ and $\pi(\mathsf{ef-fe})(y)$ (which is just $x$ in the r.h.s.\ of~\eqref{(ef-fe)x} and $y$ in the r.h.s.\ of~\eqref{(ef-fe)y}) is zero. This is because every term of the coefficient at this monomial involves products like $a_wc_t$ with $w+t=0$. It follows from Lemma~\ref{ac_0} that under our assumption every such product is zero. Thus, looking at~\eqref{effe}, we conclude that both weight constants $\alpha$ and $\beta$ are $1$ or $-1$. This contradicts to our assumption on $\pi$ being non-generic, which implies that we are not in the context of Lemma \ref{iz}.

Next assume that $0$ is inside the interval of integers \eqref{inzm} (not at an endpoint). Observe that the monomial in \eqref{(ef-fe)x} corresponding to $i$ being equal to one of the endpoints $\operatorname{minind}_\mathsf{e}(\pi)+\operatorname{minind}_\mathsf{f}(\pi)$ or $\operatorname{maxind}_\mathsf{e}(\pi)+\operatorname{maxind}_\mathsf{f}(\pi)$ has a constant coefficient which is just the (single) product $a_wc_tq^{(u+wr)(-v+ts)}\big(\alpha q^{is}-\alpha^{-1}\big)\big(q^{-2+iD}-1\big)$ where $w=\operatorname{minind}_\mathsf{e}(\pi)$, $t=\operatorname{minind}_\mathsf{f}(\pi)$ (respectively $w=\operatorname{maxind}_\mathsf{e}(\pi)$, $t=\operatorname{maxind}_\mathsf{f}(\pi)$). In both cases neither $a_w$ nor $c_t$ could be zero by Lemma~\ref{ecnz}. On the other hand, one deduces from~\eqref{effe} that the entire product $a_wc_tq^{(u+wr)(-v+ts)}\big(\alpha q^{is}-\alpha^{-1}\big)\big(q^{-2+iD}-1\big)$ should be zero with $i$ at each endpoint. It follows that at each endpoint one should have either $\alpha q^{is}-\alpha^{-1}=0$ or $q^{-2+iD}-1=0$.

The relation $q^{-2+iD}-1=0$, due to our assumption on $q$, is equivalent to $iD=2$.

As for $\alpha q^{is}-\alpha^{-1}=0$, one can apply additionally \eqref{abgf} to exclude $\alpha$ and obtain, under the additional assumption $s\ne 0$, that $iD=4$.

One can also reproduce the above argument with respect to \eqref{(ef-fe)y} in order to conclude that, with $i$ being at an endpoint (other than that with $iD=2$), under the additional assumption $r\ne 0$, that $iD=4$.

As it can not happen that $r=s=0$ simultaneously (this is just our present assumption on~$\pi$ being non-generic), we deduce that at this endpoint $iD=4$ with no additional assumptions.

Since $D$ is fixed, we conclude that the integer index $i$ at both endpoints of~\eqref{inzm} should have the same sign, which contradicts to our assumption on $0$ being inside the interval \eqref{inzm}. This completes the proof of lemma.
\end{proof}

\begin{Lemma}\label{iD}The extreme monomials in \eqref{(ef-fe)x} and \eqref{(ef-fe)y}, which correspond to $i\ne 0$, are subject to either $iD=4$ or $iD=2$.
\end{Lemma}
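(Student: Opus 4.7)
The plan is to extract the endpoint analysis from the proof of Lemma~\ref{0ep} and apply it directly. By \eqref{effe} and the fact that $\pi(\mathsf{k})$ multiplies $x$ (respectively $y$) by $\alpha$ (respectively $\beta$), the element $\pi(\mathsf{ef-fe})(x)$ is just a scalar multiple of $x$, and similarly $\pi(\mathsf{ef-fe})(y)$ is a scalar multiple of $y$. Hence in both \eqref{(ef-fe)x} and \eqref{(ef-fe)y} the coefficient at every monomial with index $i\ne 0$ must vanish. In particular this applies to the extreme monomial(s) with $i\ne 0$, whose existence is guaranteed by Lemma~\ref{0ep} (exactly one endpoint of~\eqref{inzm} is nonzero).

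Next I would observe that, by Lemmas~\ref{ac_0} and~\ref{ecnz}, at either endpoint of \eqref{inzm} the sum $\sum_{w+t=i}a_wc_tq^{(u+wr)(-v+ts)}$ reduces to the single term with $(w,t)$ equal to $(\operatorname{minind}_\mathsf{e}(\pi),\operatorname{minind}_\mathsf{f}(\pi))$ or $(\operatorname{maxind}_\mathsf{e}(\pi),\operatorname{maxind}_\mathsf{f}(\pi))$ respectively, and the corresponding product $a_wc_t$ is nonzero. Therefore at the nonzero endpoint $i=i_*$ the vanishing of the coefficient in \eqref{(ef-fe)x} forces either
\begin{gather*}
\alpha q^{i_*s}-\alpha^{-1}=0\qquad\text{or}\qquad q^{-2+i_*D}-1=0,
\end{gather*}
and the vanishing of the coefficient in \eqref{(ef-fe)y} forces either
\begin{gather*}
\beta^{-1}q^{i_*r}-\beta=0\qquad\text{or}\qquad 1-q^{-2+i_*D}=0.
\end{gather*}

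In the cases where $q^{-2+i_*D}=1$, our assumption that $q$ is not a root of unity yields immediately $i_*D=2$. Otherwise I would combine $\alpha^2=q^{-i_*s}$ with \eqref{abgf} ($\alpha^D=q^{-2s}$): raising to the $D$-th power gives $q^{-4s}=q^{-i_*sD}$, hence $s(i_*D-4)=0$. Similarly, $\beta^2=q^{i_*r}$ together with $\beta^D=q^{2r}$ yields $r(i_*D-4)=0$. Since the minimality of $(r,s)$ combined with the non-generic hypothesis forbids $r=s=0$, at least one of the two equations furnishes $i_*D=4$, concluding the proof.

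The main obstacle, as in Lemma~\ref{0ep}, is the possibility $s=0$ (or $r=0$), which would make the single $x$-equation (respectively $y$-equation) insufficient; the remedy is to use the $x$- and $y$-equations in tandem, exploiting the fact that both extreme monomials in \eqref{(ef-fe)x} and \eqref{(ef-fe)y} sit at the same index~$i_*$. Beyond this, the argument is essentially bookkeeping and requires no new ideas once the structure of the coefficients in \eqref{(ef-fe)x}, \eqref{(ef-fe)y} from Lemma~\ref{ef-fe} is invoked.
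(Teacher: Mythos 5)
Your proposal is correct and follows essentially the same route as the paper: Lemma~\ref{0ep} locates the single nonzero endpoint, Lemmas~\ref{ac_0} and~\ref{ecnz} reduce the coefficient there to one nonvanishing product $a_wc_t$ times $\big(\alpha q^{i_*s}-\alpha^{-1}\big)\big(q^{-2+i_*D}-1\big)$, relation~\eqref{effe} forces that coefficient to vanish, and the dichotomy $i_*D=2$ versus (via~\eqref{abgf} applied to both the $x$- and $y$-equations, using $(r,s)\ne(0,0)$) $i_*D=4$ is exactly the paper's. Your explicit derivation of $s(i_*D-4)=0$ and $r(i_*D-4)=0$ is just a slightly more streamlined write-up of the same computation.
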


\begin{proof} By Lemma \ref{0ep} we need to prove our claim in two separate cases. Let us stick to the case $\operatorname{minind}_\mathsf{e}(\pi)+\operatorname{minind}_\mathsf{f}(\pi)=0$. The opposite case $\operatorname{maxind}_\mathsf{e}(\pi)+\operatorname{maxind}_\mathsf{f}(\pi)=0$ can be considered similarly.

So, we assume $\operatorname{minind}_\mathsf{e}(\pi)+\operatorname{minind}_\mathsf{f}(\pi) =0$, hence by the assumption of our lemma \linebreak $\operatorname{maxind}_\mathsf{e}(\pi)+\operatorname{maxind}_\mathsf{f}(\pi)>0$. The monomial in \eqref{(ef-fe)x} corresponding to $i=\operatorname{maxind}_\mathsf{e}(\pi)+ \operatorname{maxind}_\mathsf{f}(\pi)$ has a~constant coefficient which is just the (single) product $a_wc_tq^{(u+wr)(-v+ts)}\big(\alpha q^{is}-\alpha^{-1}\big)\big(q^{-2+iD}-1\big)$ where $w=\operatorname{maxind}_\mathsf{e}(\pi)$, $t=\operatorname{maxind}_\mathsf{f}(\pi)$. Neither $a_w$ nor $c_t$ in the latter product could be zero by Lemma~\ref{ecnz}. On the other hand, one deduces from \eqref{effe} that the entire product $a_wc_tq^{(u+wr)(-v+ts)}\big(\alpha q^{is}-\alpha^{-1}\big)\big(q^{-2+iD}-1\big)$ should be zero since $i>0$ (while $(r,s)\ne(0,0)$, as we are in type~(IV)). It follows that one should have either $\alpha q^{is}-\alpha^{-1}=0$ or $q^{-2+iD}-1=0$.

The relation $q^{-2+iD}-1=0$, due to our assumption on $q$, is equivalent to $iD=2$.

As for $\alpha q^{is}-\alpha^{-1}=0$, one can apply additionally \eqref{abgf} to exclude $\alpha$ and obtain, under the additional assumption $s\ne 0$, that $iD=4$.

One can also reproduce the above argument with respect to~\eqref{(ef-fe)y} in order to conclude that, with $i$ other than that with $iD=2$, under the additional assumption $r\ne 0$, that $iD=4$.

As it can not happen that $r=s=0$ simultaneously (since $\pi$ is type~(IV)), we deduce that $iD=4$ with no additional assumptions.
\end{proof}

\begin{Corollary}\label{5terms}The sums in \eqref{ex2}--\eqref{fy2} can have at most 5 (non-zero) terms.
\end{Corollary}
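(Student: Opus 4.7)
The plan is to bound, for each of the sums \eqref{ex2}--\eqref{fy2}, the range of indices carrying non-zero coefficients, chaining the three preceding lemmas. First, by Lemma~\ref{ac_0} together with Lemma~\ref{ecnz}, the non-zero $a_w$'s occur precisely for $\operatorname{minind}_\mathsf{e}(\pi)\le w\le\operatorname{maxind}_\mathsf{e}(\pi)$, so each of \eqref{ex2}, \eqref{ey2} has at most $\operatorname{maxind}_\mathsf{e}(\pi)-\operatorname{minind}_\mathsf{e}(\pi)+1$ non-zero terms; analogously for \eqref{fx2}, \eqref{fy2} with the $c_t$'s. It thus suffices to show that each of the spans $\operatorname{maxind}_\mathsf{e}(\pi)-\operatorname{minind}_\mathsf{e}(\pi)$ and $\operatorname{maxind}_\mathsf{f}(\pi)-\operatorname{minind}_\mathsf{f}(\pi)$ is at most $4$.

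Next I would invoke Lemma~\ref{0ep}, which offers two symmetric cases; assume $\operatorname{minind}_\mathsf{e}(\pi)+\operatorname{minind}_\mathsf{f}(\pi)=0$, the opposite case being handled identically after swapping minima with maxima and flipping signs. Applying Lemma~\ref{iD} to the remaining extreme index $i=\operatorname{maxind}_\mathsf{e}(\pi)+\operatorname{maxind}_\mathsf{f}(\pi)$, either $i=0$, in which case both $\mathsf{e}$- and $\mathsf{f}$-ranges collapse to a single point and there is nothing to prove, or $iD\in\{2,4\}$, which forces $|D|\in\{1,2,4\}$ and $0<i\le 4/|D|\le 4$.

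The punchline is the trivial inequality $\operatorname{minind}_\mathsf{e}(\pi)=-\operatorname{minind}_\mathsf{f}(\pi)\ge-\operatorname{maxind}_\mathsf{f}(\pi)$, combined with the bound of the preceding step, which yields
$$\operatorname{maxind}_\mathsf{e}(\pi)-\operatorname{minind}_\mathsf{e}(\pi)\le\operatorname{maxind}_\mathsf{e}(\pi)+\operatorname{maxind}_\mathsf{f}(\pi)\le 4,$$
and symmetrically $\operatorname{maxind}_\mathsf{f}(\pi)-\operatorname{minind}_\mathsf{f}(\pi)\le 4$ by the same reasoning with the roles of $\mathsf{e}$ and $\mathsf{f}$ interchanged. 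Hence each sum in \eqref{ex2}--\eqref{fy2} has at most $5$ non-zero terms. The only real obstacle is bookkeeping across the two alternatives of Lemma~\ref{0ep}; no new arithmetic enters beyond noting that $iD\in\{2,4\}$ with $D\in\mathbb{Z}\setminus\{0\}$ constrains $|i|\le 4$.
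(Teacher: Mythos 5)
Your argument is correct and is exactly the chain of deductions the paper intends: the Corollary is stated without proof as an immediate consequence of Lemmas~\ref{ac_0}, \ref{ecnz}, \ref{0ep}, and~\ref{iD}, and your bookkeeping (reducing to the span bounds $\operatorname{maxind}_\mathsf{e}(\pi)-\operatorname{minind}_\mathsf{e}(\pi)\le\operatorname{maxind}_\mathsf{e}(\pi)+\operatorname{maxind}_\mathsf{f}(\pi)\le 4$ via $iD\in\{2,4\}$) supplies precisely the missing details. The only cosmetic quibble is the word ``precisely'': Lemma~\ref{ac_0} guarantees vanishing of $a_w$ only \emph{outside} the index interval, while interior coefficients may still vanish, but this does not affect the ``at most'' count.
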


\begin{Remark}In fact, the number of terms in the sums in \eqref{ex2}--\eqref{fy2} appears to be even lower, namely at most 4. This will become clear below, after final computing of the coefficients.
\end{Remark}

In view of Lemmas \ref{ac_0}, \ref{ecnz}, \ref{0ep}, and Corollary \ref{5terms}, we can now arrange one more adjustment of \eqref{ex2}--\eqref{fy2}. For that, let us introduce three more parameters $M,L,N\in\mathbb{Z}$. Looking at Lemma \ref{0ep}, we consider first the case $\operatorname{minind}_\mathsf{e}(\pi)+\operatorname{minind}_\mathsf{f}(\pi)=0$. Then with $0\le L\le\operatorname{maxind}_\mathsf{e}(\pi)+
\operatorname{maxind}_\mathsf{f}(\pi)=N$, one can rewrite \eqref{ex2}--\eqref{fy2} as follows
\begin{gather}
\pi(\mathsf{e})(x) =\sum_{w=0}^La_{M+w}\big(1-\alpha q^{v+(M+w)s}\big)x^{u+1+(M+w)r}y^{v+(M+w)s}, \label{ex3}\\
\pi(\mathsf{e})(y) =\sum_{w=0}^La_{M+w}\big(q^{u+(M+w)r}-\beta\big)x^{u+(M+w)r}y^{v+1+(M+w)s},\label{ey3}\\
\pi(\mathsf{f})(x) =\sum_{t=0}^{N-L}c_{-M+t}\big(\alpha^{-1}-q^{-v+(-M+t)s}\big)x^{-u+1+(-M+t)r}y^{-v+(-M+t)s}, \label{fx3}\\
\pi(\mathsf{f})(y)=\sum_{t=0}^{N-L}c_{-M+t}\big(\beta^{-1}q^{-u+(-M+t)r}-1\big)x^{-u+(-M+t)r}y^{-v+1+(-M+t)s}. \label{fy3}
\end{gather}
Note that by Lemma \ref{iD} $|N|\in\{1,2,4\}$, and there must be certain dependencies between $a_{M+w}$ and $c_{-M+t}$.

In the case $\operatorname{maxind}_\mathsf{e}(\pi)+\operatorname{maxind}_\mathsf{f}(\pi)=0$, with $N=\operatorname{minind}_\mathsf{e}(\pi)+\operatorname{minind}_\mathsf{f}(\pi)\le L\le 0$, one has
\begin{gather}
\pi(\mathsf{e})(x) =\sum_{w=L}^0a_{M+w}\big(1-\alpha q^{v+(M+w)s}\big)x^{u+1+(M+w)r}y^{v+(M+w)s},\label{ex4}\\
\pi(\mathsf{e})(y) =\sum_{w=L}^0a_{M+w}\big(q^{u+(M+w)r}-\beta\big)x^{u+(M+w)r}y^{v+1+(M+w)s}, \label{ey4}\\
\pi(\mathsf{f})(x) =\sum_{t=N-L}^0c_{-M+t}\big(\alpha^{-1}-q^{-v+(-M+t)s}\big)x^{-u+1+(-M+t)r}y^{-v+(-M+t)s}, \label{fx4}\\
\pi(\mathsf{f})(y) =\sum_{t=N-L}^0c_{-M+t}\big(\beta^{-1}q^{-u+(-M+t)r}-1\big)x^{-u+(-M+t)r}y^{-v+1+(-M+t)s}. \label{fy4}
\end{gather}

\begin{Lemma}\label{param_change}
The collection of $($non-generic$)$ symmetries corresponding to $\operatorname{maxind}_\mathsf{e}(\pi)+$ \linebreak $\operatorname{maxind}_\mathsf{f}(\pi)=0$ coincides with the collection of symmetries with $\operatorname{minind}_\mathsf{e}(\pi)+\operatorname{minind}_\mathsf{f}(\pi)=0$. More precisely, each symmetry from the first collection becomes a symmetry from the second collection after a suitable change of parameters, and vice versa.
\end{Lemma}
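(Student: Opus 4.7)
The plan is to exploit the intrinsic ambiguity of the minimal pair $(r,s)$: as noted just after the definition of minimality, $(r,s)$ is determined only up to a common sign change. I shall show that replacing $(r,s)$ by $(-r,-s)$ in the integral parameters of a symmetry $\pi$ converts any description of $\pi$ of the form \eqref{ex3}--\eqref{fy3} into a description of the same $\pi$ of the form \eqref{ex4}--\eqref{fy4}, and vice versa.

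First, the new matrix $\Phi'=\left(\begin{smallmatrix}-r & -s\\ u & v\end{smallmatrix}\right)$ still satisfies \eqref{wce} (since $1^{-1}=1$), still has a minimal first row, and still has non-zero determinant, so $\Phi'$ qualifies as a legitimate matrix of integral parameters for $\pi$.

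Next, I would trace how this sign change affects the series \eqref{exy1}--\eqref{fxy1}. Using
\begin{gather*}
x^{u+1+wr}y^{v+ws}=x^{u+1+(-w)(-r)}y^{v+(-w)(-s)},
\end{gather*}
and analogous identities for the other three series, the substitution $w=-w'$ rewrites each series in the new parametrization so that the coefficient of $x^{u+1+w'(-r)}y^{v+w'(-s)}$ in $\pi(\mathsf{e})(x)$ becomes $\tilde a_{w'}'=a_{-w'}'$, and analogously for $\pi(\mathsf{e})(y)$, $\pi(\mathsf{f})(x)$, $\pi(\mathsf{f})(y)$. Consequently, $\operatorname{maxind}_\mathsf{e}$, $\operatorname{minind}_\mathsf{e}$ and $\operatorname{maxind}_\mathsf{f}$, $\operatorname{minind}_\mathsf{f}$ exchange roles with sign reversal, so the hypothesis $\operatorname{minind}_\mathsf{e}(\pi)+\operatorname{minind}_\mathsf{f}(\pi)=0$ in the old parametrization turns into $\operatorname{maxind}_\mathsf{e}(\pi)+\operatorname{maxind}_\mathsf{f}(\pi)=0$ in the new one, and conversely.

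Translated into the parameters $M, L, N$ of \eqref{ex3}--\eqref{fy4} this reads $(M,L,N)\mapsto(-M,-L,-N)$, with the coefficients relabeled according to $\tilde a_{k'}=a_{-k'}$ and $\tilde c_{k'}=c_{-k'}$. This explicit reparametrization sends any solution of \eqref{ex3}--\eqref{fy3} to a solution of \eqref{ex4}--\eqref{fy4} and vice versa, establishing the claimed bijection. The only obstacle is mild index bookkeeping: one must check that every structural factor (such as $1-\alpha q^{v+ws}$, $q^{u+wr}-\beta$, and their $\mathsf{f}$-analogues) transforms covariantly under the sign change, but this is immediate since $\alpha$, $\beta$, $u$, $v$ are unaffected.
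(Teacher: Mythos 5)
Your proof is correct and follows essentially the same route as the paper: the author's proof performs exactly the substitution $M=-M'$, $r=-r'$, $s=-s'$, $w=-w'$, $t=-t'$ (i.e., the sign ambiguity of the minimal pair $(r,s)$ together with the induced reindexing of the coefficients and of $M$, $L$, $N$), checks that the weight constants and the elements $\pi(\mathsf{e})(x)$, $\pi(\mathsf{e})(y)$, $\pi(\mathsf{f})(x)$, $\pi(\mathsf{f})(y)$ are unchanged, and concludes that the same symmetry now satisfies $\operatorname{minind}_\mathsf{e}(\pi)+\operatorname{minind}_\mathsf{f}(\pi)=0$ in the new parametrization. Your observation that the new matrix $\left(\begin{smallmatrix}-r & -s\\ u & v\end{smallmatrix}\right)$ still satisfies \eqref{wce} with a minimal first row is the same legitimacy check the paper makes implicitly, so nothing is missing.
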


\begin{proof} The symmetries with $\operatorname{maxind}_\mathsf{e}(\pi)+ \operatorname{maxind}_\mathsf{f}(\pi)=0$ are described by \eqref{ex4}--\eqref{fy4}, together with $\pi(\mathsf{k})(x)=\alpha x$, $\pi(\mathsf{k})(y)=\beta y$, where the pair of weight constants $\binom{\alpha}{\beta}$ is a (minimal) solution of~\eqref{wce}. This description is certainly modulo some dependencies between $a_{M+w}$ and $c_{-M+t}$.

Let us substitute $M=-M'$, $r=-r'$, $s=-s'$, $w=-w'$, $t=-t'$. The latter two changes of indices in sums forces also change of the parameters $N$ and $L$. Now \eqref{ex4}--\eqref{fy4} acquire the form
\begin{gather*}
\pi(\mathsf{e})(x) =\sum_{w'=0}^{-L}a_{-M'-w'}\big(1-\alpha q^{v+(M'+w')s'}\big)x^{u+1+(M'+w')r'}y^{v+(M'+w')s'},\\
\pi(\mathsf{e})(y) =\sum_{w'=0}^{-L}a_{-M'-w'}\big(q^{u+(M'+w')r'}-\beta\big)x^{u+(M'+w')r'}y^{v+1+(M'+w')s'},\\
\pi(\mathsf{f})(x) =\sum_{t'=0}^{-N+L}c_{M'-t'}\big(\alpha^{-1}-q^{-v+(-M'+t')s'}\big)x^{-u+1+(-M'+t')r'}y^{-v+(-M'+t')s'},\\
\pi(\mathsf{f})(y) =\sum_{t'=0}^{-N+L}c_{M'-t'}\big(\beta^{-1}q^{-u+(-M'+t')r'}-1\big)x^{-u+(-M'+t')r'}y^{-v+1+(-M'+t')s'}.
\end{gather*}
Clearly, this is the same as \eqref{ex3}--\eqref{fy3} up to replacement of (in fact, reindexing) the parame\-ters~$a_i$,~$c_j$. The set of minimal solutions of~\eqref{wce} (weight constants) obviously remains intact under the above reindexing, hence the operator $\pi(\mathsf{k})$ remains the same. Another obvious observation is that this reindexing also remains intact the elements $\pi(\mathsf{e})(x)$, $\pi(\mathsf{e})(y)$, $\pi(\mathsf{f})(x)$, $\pi(\mathsf{f})(y)$ of $\mathbb{C}_q\big[x^{\pm 1},y^{\pm 1}\big]$ under a fixed set of~$a_i$,~$c_j$ before reindexing, since each of those complex constants remains the same after reindexing. It follows that the operators $\pi(\mathsf{e})$, $\pi(\mathsf{f})$ remain the same after the above reindexing, as this is first an application of the coproduct in $U_q(\mathfrak{sl}_2)$ (via using the Sweedler sigma-notation) while extending to monomials, and then extending by linearity to the entire $\mathbb{C}_q\big[x^{\pm 1},y^{\pm 1}\big]$. Both procedures are independent of the parameters of symmetries.

Thus, the symmetry (under a fixed set of complex parameters) appears to be the same after reindexing, but now it possesses the property $\operatorname{minind}_\mathsf{e}(\pi)+
\operatorname{minind}_\mathsf{f}(\pi)=0$ with respect to the new integral parameters.
\end{proof}

\begin{Remark}\label{rest_form}Lemma \ref{param_change} implies that, in writing down a complete list of non-generic $U_q(\mathfrak{sl}_2)$-symmetries on $\mathbb{C}_q\big[x^{\pm 1},y^{\pm 1}\big]$, it suffices to restrict oneself to the symmetries with $\operatorname{minind}_\mathsf{e}(\pi)+\operatorname{minind}_\mathsf{f}(\pi)=0$.
\end{Remark}

\begin{Remark}
In the context of Lemma \ref{param_change} and Remark \ref{rest_form}, it is useful to reindex the (non-zero) complex constants $a_i$, $c_j$ in such a way that $a_{M+w}$ as in \eqref{ex3}--\eqref{ey3} becomes $a_w$ and $c_{-M+t}$ as in \eqref{fx3}--\eqref{fy3} becomes~$c_t$. This allows one to simplify the notation and to rewrite \eqref{ex3}--\eqref{fy3} and \eqref{(ef-fe)x}--\eqref{(ef-fe)y} in the form as follows, suitable for subsequent applications:
\begin{gather}
\pi(\mathsf{e})(x) =\sum_{w=0}^La_w\big(1-\alpha q^{v+(M+w)s}\big)x^{u+1+(M+w)r}y^{v+(M+w)s},\label{ex6}\\
 \pi(\mathsf{e})(y) =\sum_{w=0}^La_w\big(q^{u+(M+w)r}-\beta\big)x^{u+(M+w)r}y^{v+1+(M+w)s},\label{ey6}\\
\pi(\mathsf{f})(x) =\sum_{t=0}^{N-L}c_t\big(\alpha^{-1}-q^{-v+(-M+t)s}\big)x^{-u+1+(-M+t)r}y^{-v+(-M+t)s}, \label{fx6}\\
\pi(\mathsf{f})(y) =\sum_{t=0}^{N-L}c_t\big(\beta^{-1}q^{-u+(-M+t)r}-1\big)x^{-u+(-M+t)r}y^{-v+1+(-M+t)s}, \label{fy6}\\
\pi(\mathsf{ef-fe})(x) =\sum_i\sum_{\substack{0\le w\le L\\ 0\le t\le N-L\\ w+t=i}}a_wc_tq^{(u+(M+w)r)(-v+(-M+t)s)}\big(\alpha q^{is}-\alpha^{-1}\big)\nonumber\\
\hphantom{\pi(\mathsf{ef-fe})(x) =}{}\times \big(q^{-2+iD}-1\big) x^{1+ir}y^{is}, \label{(ef-fe)x2}\\
 \pi(\mathsf{ef-fe})(y) =\sum_i\sum_{\substack{0\le w\le L\\ 0\le t\le N-L\\ w+t=i}}a_wc_tq^{(u+(M+w)r)(-v+(-M+t)s)}\big(\beta^{-1}q^{ir}-\beta\big)\nonumber\\
 \hphantom{\pi(\mathsf{ef-fe})(y) =}{}\times \big(1-q^{-2+iD}\big) x^{ir}y^{1+is}. \label{(ef-fe)y2}
\end{gather}
\end{Remark}

\section{A complete list of non-generic symmetries}\label{clngs}

The discussion of Section \ref{wpem} indicates that non-generic symmetries are to be searched for in the form \eqref{ex6}--\eqref{fy6}, together with
\begin{gather*}\pi(\mathsf{k})(x)=\alpha x,\qquad\pi(\mathsf{k})(y)=\beta y,\end{gather*}
where the pair of weight constants $\binom{\alpha}{\beta}$ is such a solution of \eqref{wce} that the pair $(r,s)$ is minimal with respect to $\binom{\alpha}{\beta}$.

Now we are in a position to write down an explicit form of non-generic $U_q(\mathfrak{sl}_2)$-symmetries on $\mathbb{C}_q\big[x^{\pm 1},y^{\pm 1}\big]$. The last step in doing this is in computing relations between (and, possibly, excluding some of) the constant coefficients $a_w$, $c_t$ at monomials in \eqref{ex6}--\eqref{fy6}. This is to be done (after explicit calculation of weight constants) via an application of~\eqref{effe} to \eqref{(ef-fe)x2}, \eqref{(ef-fe)y2}, where all the monomials should be zero except those with $i=0$; the latter must have coefficients deduced from~\eqref{effe}.

We keep the notation of Section~\ref{wpem} concerning the integral parameters $r$, $s$, $u$, $v$, $D$, $N$, $L$ of non-generic symmetries. It follows from Proposition~\ref{D_inv} and Lemma~\ref{iD} that we have to distinguish the 3 classes of symmetries with $|D|=1,2,\text{\ or\ }4$. Also, in view of Lemma~\ref{param_change}, we restrict our considerations to the case $\operatorname{minind}_\mathsf{e}(\pi)+\operatorname{minind}_\mathsf{f}(\pi)=0$. It follows that $\operatorname{maxind}_\mathsf{e}(\pi)+ \operatorname{maxind}_\mathsf{f}(\pi)>0$, hence, in view of Lemma~\ref{iD}, $D=1,2,\text{\ or\ }4$. However, it will become clear below that the explicit form of relationship between the coefficients $a_w$, $c_t$ depends on $L$, so we need to partition the class of symmetries into finer series corresponding to specific values of $L$. In fact, the values of $L$ involved here determine the number of terms (utmost, with all $a_w$, $c_t$ non-zero, $0\le w\le L$, $0\le t\le N-L$) at $\pi(\mathsf{e})(x)$, $\pi(\mathsf{e})(y)$, $\pi(\mathsf{f})(x)$, $\pi(\mathsf{f})(y)$.

The names of series of symmetries to be used below are of the form $D_iG_jE_{L+1}F_{N-L+1}$, where~$i$ is the value of $D$ within the series (see also Proposition~\ref{D_inv} and the subsequent remark), $j$ is the value of the invariant $G=\operatorname{gcd}(r,s)$, $L+1$ the utmost number of terms at $\pi(\mathsf{e})(x)$, $\pi(\mathsf{e})(y)$, $N-L+1$ the utmost number of terms at $\pi(\mathsf{f})(x)$, $\pi(\mathsf{f})(y)$.

In what follows we restrict ourselves to writing down the final form of the series of non-generic $U_q(\mathfrak{sl}_2)$-symmetries on $\mathbb{C}_q\big[x^{\pm 1},y^{\pm 1}\big]$. The related calculation of the coefficients is completely routine and thus omitted.

\subsection[The case $D=1$]{The case $\boldsymbol{D=1}$}

With $D=1$, the weight constants $\alpha$, $\beta$ are determined by~\eqref{abgf} unambiguously, once the integral parameters $r$, $s$, $u$, $v$ are given. The corresponding values for the weight constants are assumed to be substituted to \eqref{ex6}--\eqref{(ef-fe)y2} prior to final calculations. The minimality condition for~$(r,s)$ here is an immediate consequence of $D=rv-su=1$, as~$r$,~$s$ are coprime. Hence in the present case $G=\operatorname{gcd}(r,s)=1$.

\subsubsection*{$\boldsymbol{D_1G_1E_1F_3}$}
\begin{gather*}
\pi(\mathsf{k})(x)= q^{-2s}x,\qquad\pi(\mathsf{k})(y)=q^{2r}y, \\
\pi(\mathsf{e})(x)=-c_0^{-1}q^{(u+Mr)(v+Ms)+3}\big(1-q^2\big)^{-2}\big(1-q^{v+(M-2)s}\big)x^{u+1+Mr}y^{v+Ms},\\
\pi(\mathsf{e})(y)=-c_0^{-1}q^{(u+Mr)(v+Ms)+3}\big(1-q^2\big)^{-2}\big(q^{u+Mr}-q^{2r}\big)x^{u+Mr}y^{v+1+Ms}, \\
\pi(\mathsf{f})(x)=c_0\big(q^{2s}-q^{-v-Ms}\big)x^{-u+1-Mr}y^{-v-Ms}\\
\hphantom{\pi(\mathsf{f})(x)=}{}+ c_2\big(q^{2s}-q^{-v+(-M+2)s}\big)x^{-u+1+(-M+2)r}y^{-v+(-M+2)s}\\
\hphantom{\pi(\mathsf{f})(x)=}{}+ c_4\big(q^{2s}-q^{-v+(-M+4)s}\big)x^{-u+1+(-M+4)r}y^{-v+(-M+4)s}, \\
\pi(\mathsf{f})(y)=c_0\big(q^{-u+(-M-2)r}-1\big)x^{-u-Mr}y^{-v+1-Ms}\\
\hphantom{\pi(\mathsf{f})(y)=}{}+ c_2\big(q^{-u-Mr}-1\big)x^{-u+(-M+2)r}y^{-v+1+(-M+2)s}\\
\hphantom{\pi(\mathsf{f})(y)=}{}+c_4\big(q^{-u+(-M+2)r}-1\big)x^{-u+(-M+4)r}y^{-v+1+(-M+4)s}.
\end{gather*}
Here $r,s,u,v,M\in\mathbb{Z}$, $D=rv-su=1$, $G=\operatorname{gcd}(r,s)=1$, $c_0,c_2,c_4\in\mathbb{C}$, $c_0\ne 0$. The name of series is due to the {\it real} number of terms, unlike $D_1G_1E_1F_5$ (as it might be in correspondence with the value $L=0$). This is because the calculation of coefficients in \eqref{fx6}, \eqref{fy6} with $L=0$ yields $c_1=c_3=0$.

\subsubsection*{$\boldsymbol{D_1G_1E_2F_4}$}
\begin{gather*}
\pi(\mathsf{k})(x)= q^{-2s}x ,\qquad\pi(\mathsf{k})(y)=q^{2r}y ,\\
\pi(\mathsf{e})(x)= -c_0^{-1}q^{(u+Mr)(v+Ms)+3}\big(1-q^2\big)^{-2}\big(1-q^{v+(M-2)s}\big)x^{u+1+Mr}y^{v+Ms}\\
\hphantom{\pi(\mathsf{e})(x)=}{}+ c_0^{-2}c_1q^{(u+Mr)(v+Ms)+2su+4+2Mrs}\big(1-q^2\big)^{-2}\big(1-q^{v+(M-1)s}\big)\\
\hphantom{\pi(\mathsf{e})(x)=}{}\times x^{u+1+(M+1)r}y^{v+(M+1)Ms} ,\\
\pi(\mathsf{e})(y)= -c_0^{-1}q^{(u+Mr)(v+Ms)+3}\big(1-q^2\big)^{-2}\big(q^{u+Mr}-q^{2r}\big)x^{u+1+(M+1)r}y^{v+(M+1)s}\\
\hphantom{\pi(\mathsf{e})(y)=}{}+ c_0^{-2}c_1q^{(u+Mr)(v+Ms)+2su+4+2Mrs}\big(1-q^2\big)^{-2}\big(q^{u+(M+1)r}-q^{2r}\big)\\
\hphantom{\pi(\mathsf{e})(y)=}{}\times x^{u+(M+1)r}y^{v+1+(M+1)Ms} ,\\
\pi(\mathsf{f})(x)= c_0\big(q^{2s}-q^{-v-Ms}\big)x^{-u+1-Mr}y^{-v-Ms}\\
\hphantom{\pi(\mathsf{f})(x)=}{}+ c_1\big(q^{2s}-q^{-v+(-M+1)s}\big)x^{-u+1+(-M+1)r}y^{-v+(-M+1)s}\\
\hphantom{\pi(\mathsf{f})(x)=}{}+ c_2\big(q^{2s}-q^{-v+(-M+2)s}\big)x^{-u+1+(-M+2)r}y^{-v+(-M+2)s}\\
\hphantom{\pi(\mathsf{f})(x)=}{}+ c_0^{-1}c_1c_2q^{4Mrs-Mrv-M^2rs-Mr+2rs}\big(q^{2s}-q^{-v+(-M+3)s}\big)\\
\hphantom{\pi(\mathsf{f})(x)=}{}\times x^{-u+1+(-M+3)r}y^{-v+(-M+3)s} ,\\
\pi(\mathsf{f})(y)= c_0\big(q^{-u+(-M-2)r}-1\big)x^{-u-Mr}y^{-v+1-Ms}\\
\hphantom{\pi(\mathsf{f})(y)=}{} + c_1\big(q^{-u+(-M-1)r}-1\big)x^{-u+(-M+1)r}y^{-v+1+(-M+1)s}\\
\hphantom{\pi(\mathsf{f})(y)=}{} + c_2\big(q^{-u-Mr}-1\big)x^{-u+(-M+2)r}y^{-v+1+(-M+2)s}\\
\hphantom{\pi(\mathsf{f})(y)=}{} + c_0^{-1}c_1c_2q^{4Mrs-Mrv-M^2rs-Mr+2rs}\big(q^{-u+(-M+1)r}-1\big)\\
\hphantom{\pi(\mathsf{f})(y)=}{}\times x^{-u+(-M+3)r}y^{-v+1+(-M+3)s} .
\end{gather*}
Here $r,s,u,v,M\in\mathbb{Z}$, $D=rv-su=1$, $G=\operatorname{gcd}(r,s)=1$, $c_0,c_1,c_2\in\mathbb{C}$, $c_0\ne 0$. This corresponds to $L=1$.

\subsubsection*{$\boldsymbol{D_1G_1E_3F_3}$}
\begin{gather*}
\pi(\mathsf{k})(x)= q^{-2s}x ,\qquad\pi(\mathsf{k})(y)=q^{2r}y ,\\
\pi(\mathsf{e})(x)= a_0\big(1-q^{v+(M-2)s}\big)x^{u+1+Mr}y^{v+Ms}+ a_1\big(1-q^{v+(M-1)s}\big)x^{u+1+(M+1)r}y^{v+(M+1)Ms}\\
\hphantom{\pi(\mathsf{e})(x)=}{} + a_2\big(1-q^{v+Ms}\big)x^{u+1+(M+2)r}y^{v+(M+2)s} ,\\
\pi(\mathsf{e})(y)= a_0\big(q^{u+Mr}-q^{2r}\big)x^{u+Mr}y^{v+1+Ms}+ a_1\big(q^{u+(M+1)r}-q^{2r}\big)x^{u+(M+1)r}y^{v+1+(M+1)Ms}\\
\hphantom{\pi(\mathsf{e})(y)=}{}+ a_2\big(q^{u+(M+2)r}-q^{2r}\big)x^{u+(M+2)r}y^{v+1+(M+2)s} ,\\
\pi(\mathsf{f})(x)= -a_0^{-1}q^{(u+Mr)(v+Ms)+3}\big(1-q^2\big)^{-2}\big(q^{2s}-q^{-v-Ms}\big)x^{-u+1-Mr}y^{-v-Ms}\\
\hphantom{\pi(\mathsf{f})(x)=}{}+ a_0^{-2}a_1q^{(u+Mr)(v+Ms)+2-2su-2Mrs}\big(1-q^2\big)^{-2}\big(q^{2s}-q^{-v+(-M+1)s}\big)\\
\hphantom{\pi(\mathsf{f})(x)=}{}\times x^{-u+1+(-M+1)r}y^{-v+(-M+1)s}\\
\hphantom{\pi(\mathsf{f})(x)=}{} -a_0^{-2}a_2q^{(u+Mr)(v+Ms)+1-4su-4Mrs}\big(1-q^2\big)^{-2}\big(q^{2s}-q^{-v+(-M+2)s}\big)\\
\hphantom{\pi(\mathsf{f})(x)=}{}\times x^{-u+1+(-M+2)r}y^{-v+(-M+2)s} ,\\
\pi(\mathsf{f})(y)= -a_0^{-1}q^{(u+Mr)(v+Ms)+3}\big(1-q^2\big)^{-2}\big(q^{-u+(-M-2)r}-1\big)x^{-u-Mr}y^{-v+1-Ms}\\
\hphantom{\pi(\mathsf{f})(y)=}{}+ a_0^{-2}a_1q^{(u+Mr)(v+Ms)+2-2su-2Mrs}\big(1-q^2\big)^{-2}\big(q^{-u+(-M-1)r}-1\big)\\
\hphantom{\pi(\mathsf{f})(y)=}{}\times x^{-u+(-M+1)r}y^{-v+1+(-M+1)s}\\
\hphantom{\pi(\mathsf{f})(y)=}{} -a_0^{-2}a_2q^{(u+Mr)(v+Ms)+1-4su-4Mrs}\big(1-q^2\big)^{-2}\big(q^{-u-Mr}-1\big)\\
\hphantom{\pi(\mathsf{f})(y)=}{}\times x^{-u+(-M+2)r}y^{-v+1+(-M+2)s} .
\end{gather*}
Here $r,s,u,v,M\in\mathbb{Z}$, $D=rv-su=1$, $G=\operatorname{gcd}(r,s)=1$, $a_0,a_1,a_2\in\mathbb{C}$, $a_0,a_1\ne 0$. This corresponds to $L=2$.

In the case $a_1=0$ the last terms at $\pi(\mathsf{f})(x)$, $\pi(\mathsf{f})(y)$ appear to be different and we obtain the series

\subsubsection*{$\boldsymbol{D_1G_1E_2F_2}$}
\begin{gather*}
\pi(\mathsf{k})(x)= q^{-2s}x ,\qquad\pi(\mathsf{k})(y)=q^{2r}y ,\\
\pi(\mathsf{e})(x)= a_0\big(1-q^{v+(M-2)s}\big)x^{u+1+Mr}y^{v+Ms}+ a_2\big(1-q^{v+Ms}\big)x^{u+1+(M+2)r}y^{v+(M+2)s} ,\\
\pi(\mathsf{e})(y)= a_0\big(q^{u+Mr}-q^{2r}\big)x^{u+Mr}y^{v+1+Ms}+ a_2\big(q^{u+(M+2)r}-q^{2r}\big)x^{u+(M+2)r}y^{v+1+(M+2)s} ,\\
\pi(\mathsf{f})(x)= -a_0^{-1}q^{(u+Mr)(v+Ms)+3}\big(1-q^2\big)^{-2}\big(q^{2s}-q^{-v-Ms}\big)x^{-u+1-Mr}y^{-v-Ms}\\
\hphantom{\pi(\mathsf{f})(x)=}{}+ c_2\big(q^{2s}-q^{-v+(-M+2)s}\big)x^{-u+1+(-M+2)r}y^{-v+(-M+2)s} ,\\
\pi(\mathsf{f})(y)= -a_0^{-1}q^{(u+Mr)(v+Ms)+3}\big(1-q^2\big)^{-2}\big(q^{-u+(-M-2)r}-1\big)x^{-u-Mr}y^{-v+1-Ms}\\
\hphantom{\pi(\mathsf{f})(y)=}{}+ c_2\big(q^{-u-Mr}-1\big)x^{-u+(-M+2)r}y^{-v+1+(-M+2)s} .
\end{gather*}
Here $r,s,u,v,M\in\mathbb{Z}$, $D=rv-su=1$, $G=\operatorname{gcd}(r,s)=1$, $a_0,a_2,c_2\in\mathbb{C}$, $a_0\ne 0$. This is just the case $L=2$, $a_1=0$ in~\eqref{ex6},~\eqref{ey6}, hence the name of series corresponding to the {\it real} number of terms.

\subsubsection*{$\boldsymbol{D_1G_1E_4F_2}$}
\begin{gather*}
\pi(\mathsf{k})(x)= q^{-2s}x ,\qquad\pi(\mathsf{k})(y)=q^{2r}y ,\\
\pi(\mathsf{e})(x)= a_0\big(1-q^{v+(M-2)s}\big)x^{u+1+Mr}y^{v+Ms}+ a_1\big(1-q^{v+(M-1)s}\big)x^{u+1+(M+1)r}y^{v+(M+1)Ms}\\
\hphantom{\pi(\mathsf{e})(x)=}{}+ a_2\big(1-q^{v+Ms}\big)x^{u+1+(M+2)r}y^{v+(M+2)s}\\
\hphantom{\pi(\mathsf{e})(x)=}{}+ a_0^{-1}a_1a_2q^{2rs}\big(1-q^{v+(M+1)s}\big)x^{u+1+(M+3)r}y^{v+(M+3)s} ,\\
\pi(\mathsf{e})(y)= a_0\big(q^{u+Mr}-q^{2r}\big)x^{u+Mr}y^{v+1+Ms}+ a_1\big(q^{u+(M+1)r}-q^{2r}\big)x^{u+(M+1)r}y^{v+1+(M+1)s}\\
\hphantom{\pi(\mathsf{e})(y)=}{}+ a_2\big(q^{u+(M+2)r}-q^{2r}\big)x^{u+(M+2)r}y^{v+1+(M+2)s}\\
\hphantom{\pi(\mathsf{e})(y)=}{}+ a_0^{-1}a_1a_2q^{2rs}\big(q^{u+(M+3)r}-1\big)x^{u+(M+3)r}y^{v+1+(M+3)s} ,\\
\pi(\mathsf{f})(x)= -a_0^{-1}q^{(u+Mr)(v+Ms)+3}\big(1-q^2\big)^{-2}\big(q^{2s}-q^{-v-Ms}\big)x^{-u+1-Mr}y^{-v-Ms}\\
\hphantom{\pi(\mathsf{f})(x)=}{} + a_0^{-2}a_1q^{(u+Mr)(v+Ms)+2-2su-2Mrs}\big(1-q^2\big)^{-2} \big(q^{2s}-q^{-v+(-M+1)s}\big)\\
\hphantom{\pi(\mathsf{f})(x)=}{}\times x^{-u+1+(-M+1)r}y^{-v+(-M+1)s} ,\\
\pi(\mathsf{f})(y)= -a_0^{-1}q^{(u+Mr)(v+Ms)+3}\big(1-q^2\big)^{-2}\big(q^{-u+(-M-2)r}-1\big)x^{-u-Mr}y^{-v+1-Ms}\\
\hphantom{\pi(\mathsf{f})(y)=}{}+ a_0^{-2}a_1q^{(u+Mr)(v+Ms)+2-2su-2Mrs}\big(1-q^2\big)^{-2}\big(q^{-u+(-M-1)r}-1\big)\\
\hphantom{\pi(\mathsf{f})(y)=}{}\times x^{-u+(-M+1)r}y^{-v+1+(-M+1)s} .
\end{gather*}
Here $r,s,u,v,M\in\mathbb{Z}$, $D=rv-su=1$, $G=\operatorname{gcd}(r,s)=1$, $a_0,a_1,a_2\in\mathbb{C}$, $a_0\ne 0$. This corresponds to $L=3$.

\subsubsection*{$\boldsymbol{D_1G_1E_3F_1}$}
\begin{gather*}
\pi(\mathsf{k})(x)= q^{-2s}x ,\qquad\pi(\mathsf{k})(y)=q^{2r}y ,\\
 \pi(\mathsf{e})(x)= a_0\big(1-q^{v+(M-2)s}\big)x^{u+1+Mr}y^{v+Ms}+ a_2\big(1-q^{v+Ms}\big)x^{u+1+(M+2)r}y^{v+(M+2)s}\\
 \hphantom{\pi(\mathsf{e})(x)=}{}+ a_4\big(1-q^{v+(M+2)s}\big)x^{u+1+(M+4)r}y^{v+(M+4)s} ,\\
\pi(\mathsf{e})(y)= a_0\big(q^{u+Mr}-q^{2r}\big)x^{u+Mr}y^{v+1+Ms}+ a_2\big(q^{u+(M+2)r}-q^{2r}\big)x^{u+(M+2)r}y^{v+1+(M+2)s}\\
\hphantom{\pi(\mathsf{e})(y)=}{}+ a_4\big(q^{u+(M+4)r}-q^{2r}\big)x^{u+(M+4)r}y^{v+1+(M+4)s} ,\\
\pi(\mathsf{f})(x)= a_0^{-1}q^{(u+Mr)(v+Ms)+3}\big(1-q^2\big)^{-2}\big(q^{2s}-q^{-v-Ms}\big)x^{-u+1-Mr}y^{-v-Ms} ,\\
\pi(\mathsf{f})(y)= a_0^{-1}q^{(u+Mr)(v+Ms)+3}\big(1-q^2\big)^{-2}\big(q^{-u+(-M-2)r}-1\big)x^{-u-Mr}y^{-v+1-Ms} .
\end{gather*}
Here $r,s,u,v,M\in\mathbb{Z}$, $D=rv-su=1$, $G=\operatorname{gcd}(r,s)=1$, $a_0,a_2,a_4\in\mathbb{C}$, $a_0\ne 0$. The name of series is due to the {\it real} number of terms, unlike $D_1G_1E_5F_1$ (as it might be in correspondence with the value $L=4$). This is because the calculation of coefficients in \eqref{ex6}, \eqref{ey6} with $L=4$ yields $a_1=a_3=0$.

\subsubsection*{The case $\boldsymbol{D=1}$, $\boldsymbol{N=2}$}

Under the assumption $D=1$, by Lemma~\ref{iD} we have also to consider the case $N=2$. It turns out that this way we find no additional symmetries. To verify this, we apply the above procedure of computing coefficients just to write down the corresponding series. We refrain from setting names to these series, because they are all embeddable to the above series with $N=4$.
\begin{gather*}
\mathbf{L}=\mathbf{0}\\
\pi(\mathsf{k})(x)= q^{-2s}x ,\qquad\pi(\mathsf{k})(y)=q^{2r}y,\\
\pi(\mathsf{e})(x)= -c_0^{-1}q^{(u+Mr)(v+Ms)+3}\big(1-q^2\big)^{-2}\big(1-q^{v+(M-2)s}\big)x^{u+1+Mr}y^{v+Ms},\\
\pi(\mathsf{e})(y)= -c_0^{-1}q^{(u+Mr)(v+Ms)+3}\big(1-q^2\big)^{-2}\big(q^{u+Mr}-q^{2r}\big)x^{u+Mr}y^{v+1+Ms},\\
\pi(\mathsf{f})(x)= c_0\big(q^{2s}-q^{-v-Ms}\big)x^{-u+1-Mr}y^{-v-Ms}\\
\hphantom{\pi(\mathsf{f})(x)=}{} + c_2\big(q^{2s}-q^{-v+(-M+2)s}\big)x^{-u+1+(-M+2)r}y^{-v+(-M+2)s},\\
\pi(\mathsf{f})(y)= c_0\big(q^{-u+(-M-2)r}-1\big)x^{-u-Mr}y^{-v+1-Ms}\\
\hphantom{\pi(\mathsf{f})(y)=}{}+ c_2\big(q^{-u-Mr}-1\big)x^{-u+(-M+2)r}y^{-v+1+(-M+2)s}.
\end{gather*}
Here $r,s,u,v,M\in\mathbb{Z}$, $D=rv-su=1$, $G=\operatorname{gcd}(r,s)=1$, $c_0,c_2\in\mathbb{C}$, $c_0\ne 0$. This is embeddable into $D_1G_1E_1F_3$ by setting there $c_4=0$.
\begin{gather*}
 \mathbf{L}=\mathbf{1}\\
 \pi(\mathsf{k})(x)= q^{-2s}x ,\qquad\pi(\mathsf{k})(y)=q^{2r}y ,\\
\pi(\mathsf{e})(x)= a_0\big(1-q^{v+(M-2)s}\big)x^{u+1+Mr}y^{v+Ms}\!+ a_1\big(1-q^{v+(M-1)s}\big)x^{u+1+(M+1)r}y^{v+(M+1)Ms} ,\\
\pi(\mathsf{e})(y)= a_0\big(q^{u+Mr}-q^{2r}\big)x^{u+Mr}y^{v+1+Ms}\!+ a_1\big(q^{u+(M+1)r}-q^{2r}\big)x^{u+(M+1)r}y^{v+1+(M+1)Ms} ,\\
\pi(\mathsf{f})(x)= -a_0^{-1}q^{(u+Mr)(v+Ms)+3}\big(1-q^2\big)^{-2}\big(q^{2s}-q^{-v-Ms}\big)x^{-u+1-Mr}y^{-v-Ms}\\
\hphantom{\pi(\mathsf{f})(x)=}{}+ a_0^{-2}a_1q^{(u+Mr)(v+Ms)+2-2su-2Mrs}\big(1-q^2\big)^{-2}\big(q^{2s}-q^{-v+(-M+1)s}\big)\\
\hphantom{\pi(\mathsf{f})(x)=}{}\times x^{-u+1+(-M+1)r}y^{-v+(-M+1)s} ,\\
\pi(\mathsf{f})(y)= -a_0^{-1}q^{(u+Mr)(v+Ms)+3}\big(1-q^2\big)^{-2}\big(q^{-u+(-M-2)r}-1\big)x^{-u-Mr}y^{-v+1-Ms}\\
\hphantom{\pi(\mathsf{f})(y)=}{}+ a_0^{-2}a_1q^{(u+Mr)(v+Ms)+2-2su-2Mrs}\big(1-q^2\big)^{-2}\big(q^{-u+(-M-1)r}-1\big)\\
\hphantom{\pi(\mathsf{f})(y)=}{}\times x^{-u+(-M+1)r}y^{-v+1+(-M+1)s}.
\end{gather*}
Here $r,s,u,v,M\in\mathbb{Z}$, $D=rv-su=1$, $G=\operatorname{gcd}(r,s)=1$, $a_0,a_1\in\mathbb{C}$, $a_0\ne 0$. This is embeddable into $D_1G_1E_4F_2$ by setting there $a_2=0$.
\begin{gather*}
 \mathbf{L}=\mathbf{2}\\
\pi(\mathsf{k})(x)= q^{-2s}x ,\qquad\pi(\mathsf{k})(y)=q^{2r}y ,\\
\pi(\mathsf{e})(x)= a_0\big(1-q^{v+(M-2)s}\big)x^{u+1+Mr}y^{v+Ms}+ a_2\big(1-q^{v+Ms}\big)x^{u+1+(M+2)r}y^{v+(M+2)s} ,\\
\pi(\mathsf{e})(y)= a_0\big(q^{u+Mr}-q^{2r}\big)x^{u+Mr}y^{v+1+Ms}+ a_2\big(q^{u+(M+2)r}-q^{2r}\big)x^{u+(M+2)r}y^{v+1+(M+2)s} ,\\
\pi(\mathsf{f})(x)= a_0^{-1}q^{(u+Mr)(v+Ms)+3}\big(1-q^2\big)^{-2}\big(q^{2s}-q^{-v-Ms}\big)x^{-u+1-Mr}y^{-v-Ms} ,\\
\pi(\mathsf{f})(y)= a_0^{-1}q^{(u+Mr)(v+Ms)+3}\big(1-q^2\big)^{-2}\big(q^{-u+(-M-2)r}-1\big)x^{-u-Mr}y^{-v+1-Ms} .
\end{gather*}
Here $r,s,u,v,M\in\mathbb{Z}$, $D=rv-su=1$, $G=\operatorname{gcd}(r,s)=1$, $a_0,a_2\in\mathbb{C}$, $a_0\ne 0$. This is embeddable into $D_1G_1E_3F_1$ by setting there $a_4=0$.

\subsection[The case $D=2$]{The case $\boldsymbol{D=2}$}

With $D=2$, \eqref{abgf} is only a property for a pair $\binom{\alpha}{\beta}$ of weight constants as a solution of \eqref{wce}, corresponding to a non-generic symmetry $\pi$ under consideration. Namely, one has $\alpha^2=q^{-2s}$, $\beta^2=q^{2r}$. The minimality assumption on $(r,s)$ is essential here. For example, $\binom{\alpha}{\beta}=\binom{q^{-2}}{-q^2}$ is a solution of \eqref{wce} with $\Phi=\left(\begin{smallmatrix}r & s\\ u & v\end{smallmatrix}\right)=\left(\begin{smallmatrix}2 & 2\\ 1 & 2\end{smallmatrix}\right)$. The pair $(r,s)$ is minimal with respect to $\binom{\alpha}{\beta}$, as $\alpha\beta=-1\ne 1$, and the associated symmetries are among those with $D=2$.

However, with another solution $\binom{\alpha}{\beta}=\binom{q^{-2}}{q^2}$ of \eqref{wce} corresponding to the same $\Phi$, the pair $(r,s)$ fails to be minimal. Dividing both $r$ and $s$ (respectively, the first line of $\Phi$) by $2$, we come back to the above picture with $D=1$.

Clearly, with $D=2$, the invariant $G$ can take only two values $1$ or $2$.

The following lemma clarifies the existence of pairs of weight constants providing the necessary minimality condition, hence the existence of series of symmetries written below derived by a~routine computing the coefficients.

\begin{Lemma}\label{lem_det2}
Let $\Phi=\left(\begin{smallmatrix}r & s\\ u & v\end{smallmatrix}\right)$ be an arbitrary integral matrix with $D=\det\Phi=2$.
\begin{enumerate}\itemsep=0pt
\item[$1.$] With $G=\operatorname{gcd}(r,s)=1$, there exist two pairs of weight constants $\binom{\alpha}{\beta}\!\in\!\left\{\binom{q^{-s}}{q^r};
\binom{(-1)^{-s}q^{-s}}{(-1)^rq^r}\right\}$, which are solutions of \eqref{wce}, along with the associated collections of symmetries. Such symmetries corresponding to different pairs of weight constants are non-isomorphic.
\item[$2.$]With $G=\operatorname{gcd}(r,s)=2$, there exists a single pair of weight constants $\binom{\alpha}{\beta}=\binom{(-1)^vq^{-s}}{(-1)^{-u}q^r}$, which is a solution of~\eqref{wce} such that the pair $(r,s)$ is minimal with respect to $\binom{\alpha}{\beta}$, along with the associated collection of symmetries.
\end{enumerate}
\end{Lemma}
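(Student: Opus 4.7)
The plan is to reduce the lemma to a concrete enumeration of sign choices. With $D = 2$, the necessary condition \eqref{abgf} gives $\alpha^2 = q^{-2s}$ and $\beta^2 = q^{2r}$, hence $\alpha = \epsilon_1 q^{-s}$, $\beta = \epsilon_2 q^r$ for some $\epsilon_1,\epsilon_2 \in \{\pm 1\}$. The top row of \eqref{wce} is $\alpha^r\beta^s = 1$, while the bottom row $\alpha^u\beta^v = q^2$ reduces, once one extracts $q^{-su+rv} = q^D = q^2$, to the sign condition $\epsilon_1^u\epsilon_2^v = 1$. Thus only four candidate pairs need to be tested, and the analysis splits cleanly according to the value of $G$.

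For part 1 ($G = 1$), the minimality of $(r,s)$ is automatic, since no integer $w \ne \pm 1$ can divide both $r$ and $s$. Since $\operatorname{gcd}(r,s) = 1$, at least one of $r$ and $s$ is odd, and a short case-by-case analysis according to the parities of $r$ and $s$ (keeping track of the parities of $u$ and $v$ via $rv - su = 2$) shows that exactly two of the four sign choices $(\epsilon_1,\epsilon_2)$ satisfy both $\alpha^r\beta^s = 1$ and $\epsilon_1^u\epsilon_2^v = 1$, and that these two solutions match the two formulas asserted in the statement. For the non-isomorphism claim I invoke the principle, made explicit in the proof of Proposition~\ref{ufnis}, that isomorphic symmetries have their weight constants on the same ${\rm SL}(2,\mathbb{Z})$-orbit. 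Since $\operatorname{gcd}(r,s) = 1$ excludes the possibility that both $(-1)^{-s}$ and $(-1)^r$ equal $+1$, the second pair differs from the first in at least one coordinate by a factor $-1$. But the action \eqref{sl_ac} carries $\bigl(\!\begin{smallmatrix}q^{-s}\\ q^r\end{smallmatrix}\!\bigr)$ only to pairs of the form $\bigl(\!\begin{smallmatrix}q^{a}\\ q^{b}\end{smallmatrix}\!\bigr)$, and no such element can equal $-q^{c}$ in either slot, since $q^{a} = -q^{c}$ would force $q^{a-c} = -1$, contradicting the assumption that $q$ is not a root of unity.

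For part 2 ($G = 2$), write $r = 2r_1$, $s = 2s_1$ with $\operatorname{gcd}(r_1,s_1) = 1$; then $D = 2$ gives $r_1 v - s_1 u = 1$. Now $\alpha^r\beta^s = (\alpha^2)^{r_1}(\beta^2)^{s_1} = q^{-2sr_1 + 2rs_1} = 1$ holds automatically (the signs vanish because $r$ and $s$ are even), so the top row of \eqref{wce} imposes no restriction on the signs. The minimality condition, taken with $w = 2$, $r' = r_1$, $s' = s_1$, requires $\alpha^{r_1}\beta^{s_1} \ne 1$, i.e.\ $\epsilon_1^{r_1}\epsilon_2^{s_1} = -1$. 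Writing $\epsilon_i = (-1)^{a_i}$ with $a_i \in \mathbb{F}_2$, the two sign conditions $\epsilon_1^u\epsilon_2^v = 1$ and $\epsilon_1^{r_1}\epsilon_2^{s_1} = -1$ become an inhomogeneous linear system over $\mathbb{F}_2$ in the unknowns $(a_1,a_2)$ whose coefficient matrix $\bigl(\!\begin{smallmatrix}u & v\\ r_1 & s_1\end{smallmatrix}\!\bigr)$ has determinant $us_1 - vr_1 = -1 \equiv 1 \pmod 2$, nonzero in $\mathbb{F}_2$. Hence the solution is unique, and a direct verification using $r_1 v - s_1 u = 1$ shows that $\epsilon_1 = (-1)^{v}$, $\epsilon_2 = (-1)^{-u}$ indeed satisfies both conditions, hence is this unique solution. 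The main obstacle is keeping the parity bookkeeping transparent; the $\mathbb{F}_2$-linear-algebra framing handles this cleanly, while the rest of the argument amounts to routine substitution.
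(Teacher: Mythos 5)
Your proof is correct, and while it follows the same skeleton as the paper's — restrict to the four candidates $\binom{\pm q^{-s}}{\pm q^{r}}$ forced by \eqref{abgf}, impose the two rows of \eqref{wce} together with minimality as parity conditions on the signs, and settle non-isomorphism by showing the two pairs cannot lie on one ${\rm SL}(2,\mathbb{Z})$-orbit — the counting is organized genuinely differently. The paper treats $\Phi$ as an endomorphism of $(\mathbb{C}{\setminus}\{0\})^2$, shows $\operatorname{Ker}\Phi\subset\{-1,1\}^2$, and pins down $\#\operatorname{Ker}\Phi=2$ by sandwiching it between $\#\Phi'\Gamma$ and $4/\#\Phi\Gamma$ using the adjugate $\Phi'$; the solution set of \eqref{wce} is then one known solution times this kernel, and for $G=2$ minimality is tested on each of the two resulting solutions separately. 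You instead encode the sign exponents as a linear system over $\mathbb{F}_2$; in part~2 the coefficient matrix $\left(\begin{smallmatrix}u & v\\ r_1 & s_1\end{smallmatrix}\right)$ has determinant $\equiv 1\pmod 2$, so existence and uniqueness of the minimal solution come simultaneously and for free, which is tidier than the paper's enumeration of four candidate pairs followed by a minimality check on each. The one place you defer work is part~1, where "a short case-by-case analysis" is asserted rather than performed; the quickest way to close it is the same $\mathbb{F}_2$ device you use in part~2: the homogeneous system with matrix $\left(\begin{smallmatrix}r & s\\ u & v\end{smallmatrix}\right)$ has determinant $rv-su=2\equiv 0$ and nonzero first row mod~$2$ (since $\operatorname{gcd}(r,s)=1$), hence rank one and exactly the two solutions $(a_1,a_2)=(0,0)$ and $(a_1,a_2)=(s,r)$, which are the two pairs in the statement (that $(s,r)$ solves the second equation is exactly $us+vr\equiv rv-su\equiv 0\pmod 2$). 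Your non-isomorphism argument is sound and is the paper's computation repackaged: an ${\rm SL}(2,\mathbb{Z})$-image of $\binom{q^{-s}}{q^{r}}$ under \eqref{sl_ac} has both entries pure integer powers of $q$, and $q^{a}=-q^{c}$ is impossible because $a=c$ gives $1=-1$ while $a\ne c$ gives $q^{2(a-c)}=1$ against the standing assumption on $q$.
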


\begin{proof} Let us treat $\Phi$ as an endomorphism of the multiplicative group $(\mathbb{C}{\setminus}\{0\})^2$. Consider the pair $\binom{\alpha}{\beta}=\binom{q^{-s}}{q^r}$. One can readily verify that this pair is a solution of \eqref{wce}. To get a complete list of solutions, we need a description of $\operatorname{Ker}\Phi$.

We start with the trivial purely computational observation that $\operatorname{Ker}\Phi\subset \Gamma\stackrel{\operatorname{def}}{=}\{-1;1\}^2$.

Use Cramer's rule to derive explicitly the integral matrix $\Phi'=\left(\begin{smallmatrix}v & -s\\ -u & r\end{smallmatrix}\right)$ such that $\Phi'\Phi=\Phi\Phi'=\left(\begin{smallmatrix}2 & 0\\ 0 & 2\end{smallmatrix}\right)$. The above inclusion property is certainly valid for $\operatorname{Ker}\Phi'$ as well, and $\Gamma$ is invariant with respect to actions of both matrices. As an immediate consequence of the definition of $\Phi'$ we deduce that
\begin{gather}\label{isk}
\Phi'\Gamma\subset\operatorname{Ker}\Phi.
\end{gather}
Note that $\Gamma$ is a 4-element group, and every its non-trivial proper subgroup is 2-element.

Since $\det\Phi=2$, at least one of the matrix elements $r$, $s$, $u$, $v$ is odd (otherwise $\det\Phi$ would be divisible by $4$). Let it be $r$, then
\begin{gather*}
\Phi'\binom{1}{-1}=\begin{pmatrix}v & -s\\ -u & r\end{pmatrix}\binom{1}{-1}=\binom{(-1)^{-s}}{(-1)^r}\ne\binom{1}{1},
\end{gather*}
and a similar argument works for the rest of the matrix elements. It follows that $\#\Phi'\Gamma\ge 2$.

This argument is also applicable to $\Phi$, that yields $\#\Phi\Gamma\ge 2$, and hence $\#\operatorname{Ker}\Phi=\frac{4}{\#\Phi\Gamma}\le 2$. The latter inequalities, together with \eqref{isk}, allow one to obtain
\begin{gather*}2\le\#\Phi'\Gamma\le\#\operatorname{Ker}\Phi\le 2,\end{gather*}
whence
\begin{gather*}\Phi'\Gamma=\operatorname{Ker}\Phi,\end{gather*}
and this subgroup is 2-element.

1. Let $G=\operatorname{gcd}(r,s)=1$. In this case at least one of the integers $r$, $s$ is odd, hence, in view of the above observations, $\binom{(-1)^{-s}}{(-1)^r}\ne\binom{1}{1}$ generates $\Phi'\Gamma=\operatorname{Ker}\Phi$. Therefore in this case \eqref{wce} has two solutions $\binom{q^{-s}}{q^r}$, $\binom{(-1)^{-s}q^{-s}}{(-1)^rq^r}$.

To see that the associated symmetries are non-isomorphic, it suffices to prove that the above two pairs of weight constants are not on the same ${\rm SL}(2,\mathbb{Z})$-orbit. Assuming the contrary, we deduce the existence of a matrix $\left(\begin{smallmatrix}k & m\\ l & n\end{smallmatrix}\right)\in {\rm SL}(2,\mathbb{Z})$ with $\left(\begin{smallmatrix}k & m\\ l & n\end{smallmatrix}\right)\binom{q^{-s}}{q^r}=\binom{(-1)^{-s}q^{-s}}{(-1)^rq^r}$, which is equivalent to
\begin{gather*}
q^{-ks+mr+s} =(-1)^{-s},\qquad q^{-ls+nr-r} =(-1)^r.
\end{gather*}
It follows that $q^{2(-ks+mr+s)}=q^{2(-ls+nr-r)}=1$, and since $q$ is not a root of $1$, one also has $-ks+mr+s=-ls+nr-r=0$. Thus we conclude that $(-1)^{-s}=(-1)^r=1$, that is both $r$ and $s$ are even, which contradicts to our assumption $G=1$.

2. Let $G=\operatorname{gcd}(r,s)=2$, that is $r=2r'$, $s=2s'$. It follows that $u$, $v$ are coprime, in particular, at least one of them is odd. We thus deduce two solutions of~\eqref{wce}:
\begin{gather*}
\binom{\alpha_1}{\beta_1}=\binom{q^{-s}}{q^r},\qquad \binom{\alpha_2}{\beta_2}=\binom{(-1)^vq^{-s}}{(-1)^{-u}q^r}.
\end{gather*}
One has
\begin{gather*}
\alpha_1^{r'}\beta_1^{s'} =q^{-sr'+rs'}=q^{2(-s'r'+r's')}=1, \\
\alpha_2^{r'}\beta_2^{s'} =(-1)^{r'v-s'u}q^{-sr'+rs'}=-1,
\end{gather*}
hence only the solution $\binom{\alpha_2}{\beta_2}=\binom{(-1)^vq^{-s}}{(-1)^{-u}q^r}$ makes the pair $(r,s)$ minimal.
\end{proof}

Here is the final list of non-generic symmetries with $D=2$, coming from adjusting the coefficients in \eqref{ex6}--\eqref{fy6} via applying \eqref{effe} together with \eqref{(ef-fe)x2}--\eqref{(ef-fe)y2}.

\subsubsection*{$\boldsymbol{D_2G_1E_1F_3(a)}$}
\begin{gather*}
\pi(\mathsf{k})(x)= q^{-s}x ,\qquad\pi(\mathsf{k})(y)=q^ry ,\\
\pi(\mathsf{e})(x)= -c_0^{-1}q^{(u+Mr)(v+Ms)+3}\big(1-q^2\big)^{-2}\big(1-q^{v+(M-1)s}\big)x^{u+1+Mr}y^{v+Ms} ,\\
 \pi(\mathsf{e})(y)= -c_0^{-1}q^{(u+Mr)(v+Ms)+3}\big(1-q^2\big)^{-2}\big(q^{u+Mr}-q^r\big)x^{u+Mr}y^{v+1+Ms} ,\\
\pi(\mathsf{f})(x)=c_0\big(q^s-q^{-v-Ms}\big)x^{-u+1-Mr}y^{-v-Ms}\\
\hphantom{\pi(\mathsf{f})(x)=}{}+ c_1\big(q^s-q^{-v+(-M+1)s}\big)x^{-u+1+(-M+1)r}y^{-v+(-M+1)s}\\
\hphantom{\pi(\mathsf{f})(x)=}{}+ c_2\big(q^s-q^{-v+(-M+2)s}\big)x^{-u+1+(-M+2)r}y^{-v+(-M+2)s} ,\\
\pi(\mathsf{f})(y)= c_0\big(q^{-u+(-M-1)r}-1\big)x^{-u-Mr}y^{-v+1-Ms}\\
\hphantom{\pi(\mathsf{f})(y)=}{} + c_1\big(q^{-u-Mr}-1\big)x^{-u+(-M+1)r}y^{-v+1+(-M+1)s}\\
\hphantom{\pi(\mathsf{f})(y)=}{}+ c_2\big(q^{-u+(-M+1)r}-1\big)x^{-u+(-M+2)r}y^{-v+1+(-M+2)s} .
\end{gather*}
Here $r,s,u,v,M\in\mathbb{Z}$, $D=rv-su=2$, $G=\operatorname{gcd}(r,s)=1$; $c_0,c_1,c_2\in\mathbb{C}$, $c_0\ne 0$. This corresponds to $L=0$.

\subsubsection*{$\boldsymbol{D_2G_1E_2F_2(a)}$}
\begin{gather*}
\pi(\mathsf{k})(x)= q^{-s}x ,\qquad\pi(\mathsf{k})(y)=q^ry ,\\
\pi(\mathsf{e})(x)= a_0\big(1-q^{v+(M-1)s}\big)x^{u+1+Mr}y^{v+Ms}+ a_1\big(1-q^{v+Ms}\big)x^{u+1+(M+1)r}y^{v+(M+1)s} ,\\
\pi(\mathsf{e})(y)= a_0\big(q^{u+Mr}-q^r\big)x^{u+Mr}y^{v+1+Ms}+ a_1\big(q^{u+(M+1)r}-q^r\big)x^{u+(M+1)r}y^{v+1+(M+1)s} ,\\
\pi(\mathsf{f})(x)= -a_0^{-1}q^{(u+Mr)(v+Ms)+3}\big(1-q^2\big)^{-2}\big(q^s-q^{-v-Ms}\big)x^{-u+1-Mr}y^{-v-Ms}\\
\hphantom{\pi(\mathsf{f})(x)=}{}+ c_1\big(q^s-q^{-v+(-M+1)s}\big)x^{-u+1+(-M+1)r}y^{-v+(-M+1)s} ,\\
\pi(\mathsf{f})(y)= -a_0^{-1}q^{(u+Mr)(v+Ms)+3}\big(1-q^2\big)^{-2}\big(q^{-u+(-M-1)r}-1\big)x^{-u-Mr}y^{-v+1-Ms}\\
\hphantom{\pi(\mathsf{f})(y)=}{}+ c_1\big(q^{-u-Mr}-1\big)x^{-u+(-M+1)r}y^{-v+1+(-M+1)s} .
\end{gather*}
Here $r,s,u,v,M\in\mathbb{Z}$, $D=rv-su=2$, $G=\operatorname{gcd}(r,s)=1$; $a_0,a_1,c_1\in\mathbb{C}$, $a_0\ne 0$. This corresponds to $L=1$.

\subsubsection*{$\boldsymbol{D_2G_1E_3F_1(a)}$}
\begin{gather*}
\pi(\mathsf{k})(x)= q^{-s}x ,\qquad\pi(\mathsf{k})(y)=q^ry ,\\
\pi(\mathsf{e})(x)= a_0\big(1-q^{v+(M-1)s}\big)x^{u+1+Mr}y^{v+Ms}+ a_1\big(1-q^{v+Ms}\big)x^{u+1+(M+1)r}y^{v+(M+1)s}\\
\hphantom{\pi(\mathsf{e})(x)=}{}+ a_2\big(1-q^{v+(M+1)s}\big)x^{u+1+(M+2)r}y^{v+(M+2)s} ,\\
\pi(\mathsf{e})(y)= a_0\big(q^{u+Mr}-q^r\big)x^{u+Mr}y^{v+1+Ms}+ a_1\big(q^{u+(M+1)r}-q^r\big)x^{u+(M+1)r}y^{v+1+(M+1)s}\\
\hphantom{\pi(\mathsf{e})(y)=}{} + a_2\big(q^{u+(M+2)r}-q^r\big)x^{u+(M+2)r}y^{v+1+(M+2)s} ,\\
\pi(\mathsf{f})(x)= a_0^{-1}q^{(u+Mr)(v+Ms)+3}\big(1-q^2\big)^{-2}\big(q^s-q^{-v-Ms}\big)x^{-u+1-Mr}y^{-v-Ms} ,\\
\pi(\mathsf{f})(y)= a_0^{-1}q^{(u+Mr)(v+Ms)+3}\big(1-q^2\big)^{-2}\big(q^{-u+(-M-1)r}-1\big)x^{-u-Mr}y^{-v+1-Ms} .
\end{gather*}
Here $r,s,u,v,M\in\mathbb{Z}$, $D=rv-su=2$, $G=\operatorname{gcd}(r,s)=1$; $a_0,a_1,a_2\in\mathbb{C}$, $a_0\ne 0$. This corresponds to $L=2$.

\subsubsection*{$\boldsymbol{D_2G_1E_1F_3(b)}$}
\begin{gather*}
\pi(\mathsf{k})(x)= (-1)^{-s}q^{-s}x ,\qquad\pi(\mathsf{k})(y)=(-1)^rq^ry ,\\
\pi(\mathsf{e})(x)= -c_0^{-1}q^{(u+Mr)(v+Ms)+3}\big(1-q^2\big)^{-2}\big(1-(-1)^{-s}q^{v+(M-1)s}\big)x^{u+1+Mr}y^{v+Ms} ,\\
\pi(\mathsf{e})(y)= -c_0^{-1}q^{(u+Mr)(v+Ms)+3}\big(1-q^2\big)^{-2}\big(q^{u+Mr}-(-1)^rq^r\big)x^{u+Mr}y^{v+1+Ms} ,\\
\pi(\mathsf{f})(x)= c_0\big((-1)^sq^s-q^{-v-Ms}\big)x^{-u+1-Mr}y^{-v-Ms}\\
\hphantom{\pi(\mathsf{f})(x)=}{}+ c_1\big((-1)^sq^s-q^{-v+(-M+1)s}\big)x^{-u+1+(-M+1)r}y^{-v+(-M+1)s}\\
\hphantom{\pi(\mathsf{f})(x)=}{}+ c_2\big((-1)^sq^s-q^{-v+(-M+2)s}\big)x^{-u+1+(-M+2)r}y^{-v+(-M+2)s} ,\\
\pi(\mathsf{f})(y)= c_0\big((-1)^{-r}q^{-u+(-M-1)r}-1\big)x^{-u-Mr}y^{-v+1-Ms}\\
\hphantom{\pi(\mathsf{f})(y)=}{}+ c_1\big((-1)^{-r}q^{-u-Mr}-1\big)x^{-u+(-M+1)r}y^{-v+1+(-M+1)s}\\
\hphantom{\pi(\mathsf{f})(y)=}{}+ c_2\big((-1)^{-r}q^{-u+(-M+1)r}-1\big)x^{-u+(-M+2)r}y^{-v+1+(-M+2)s} .
\end{gather*}
Here $r,s,u,v,M\in\mathbb{Z}$, $D=rv-su=2$, $G=\operatorname{gcd}(r,s)=1$; $c_0,c_1,c_2\in\mathbb{C}$, $c_0\ne 0$. This corresponds to $L=0$.

\subsubsection*{$\boldsymbol{D_2G_1E_2F_2(b)}$}
\begin{gather*}
\pi(\mathsf{k})(x)= (-1)^{-s}q^{-s}x ,\qquad\pi(\mathsf{k})(y)=(-1)^rq^ry ,\\
\pi(\mathsf{e})(x)= a_0\big(1-(-1)^{-s}q^{v+(M-1)s}\big)x^{u+1+Mr}y^{v+Ms}\\
\hphantom{\pi(\mathsf{e})(x)=}{} + a_1\big(1-(-1)^{-s}q^{v+Ms}\big)x^{u+1+(M+1)r}y^{v+(M+1)s} ,\\
\pi(\mathsf{e})(y)= a_0\big(q^{u+Mr}-(-1)^rq^r\big)x^{u+Mr}y^{v+1+Ms}\\
\hphantom{\pi(\mathsf{e})(y)=}{} + a_1\big(q^{u+(M+1)r}-(-1)^rq^r\big)x^{u+(M+1)r}y^{v+1+(M+1)s} ,\\
\pi(\mathsf{f})(x)= -a_0^{-1}q^{(u+Mr)(v+Ms)+3}\big(1-q^2\big)^{-2}\big((-1)^sq^s-q^{-v-Ms}\big)x^{-u+1-Mr}y^{-v-Ms}\\
\hphantom{\pi(\mathsf{f})(x)=}{} + c_1\big((-1)^sq^s-q^{-v+(-M+1)s}\big)x^{-u+1+(-M+1)r}y^{-v+(-M+1)s} ,\\
\pi(\mathsf{f})(y)= -a_0^{-1}q^{(u+Mr)(v+Ms)+3}\big(1-q^2\big)^{-2}\big((-1)^{-r}q^{-u+(-M-1)r}-1\big) x^{-u-Mr}y^{-v+1-Ms}\\
\hphantom{\pi(\mathsf{f})(y)=}{}+ c_1\big((-1)^{-r}q^{-u-Mr}-1\big)x^{-u+(-M+1)r}y^{-v+1+(-M+1)s} .
\end{gather*}
Here $r,s,u,v,M\in\mathbb{Z}$, $D=rv-su=2$, $G=\operatorname{gcd}(r,s)=1$; $a_0,a_1,c_1\in\mathbb{C}$, $a_0\ne 0$. This corresponds to $L=1$.

\subsubsection*{$\boldsymbol{D_2G_1E_3F_1(b)}$}
\begin{gather*}
\pi(\mathsf{k})(x)= (-1)^{-s}q^{-s}x ,\qquad\pi(\mathsf{k})(y)=(-1)^rq^ry ,\\
\pi(\mathsf{e})(x)= a_0\big(1-(-1)^{-s}q^{v+(M-1)s}\big)x^{u+1+Mr}y^{v+Ms}\\
\hphantom{\pi(\mathsf{e})(x)=}{} + a_1\big(1-(-1)^{-s}q^{v+Ms}\big)x^{u+1+(M+1)r}y^{v+(M+1)s}\\
\hphantom{\pi(\mathsf{e})(x)=}{} + a_2\big(1-(-1)^{-s}q^{v+(M+1)s}\big)x^{u+1+(M+2)r}y^{v+(M+2)s} ,\\
\pi(\mathsf{e})(y)= a_0\big(q^{u+Mr}-(-1)^rq^r\big)x^{u+Mr}y^{v+1+Ms}\\
\hphantom{\pi(\mathsf{e})(y)=}{} + a_1\big(q^{u+(M+1)r}-(-1)^rq^r\big)x^{u+(M+1)r}y^{v+1+(M+1)s}\\
\hphantom{\pi(\mathsf{e})(y)=}{} + a_2\big(q^{u+(M+2)r}-(-1)^rq^r\big)x^{u+(M+2)r}y^{v+1+(M+2)s} ,\\
\pi(\mathsf{f})(x)= a_0^{-1}q^{(u+Mr)(v+Ms)+3}\big(1-q^2\big)^{-2}\big((-1)^sq^s-q^{-v-Ms}\big)x^{-u+1-Mr}y^{-v-Ms} ,\\
\pi(\mathsf{f})(y)= a_0^{-1}q^{(u+Mr)(v+Ms)+3}\big(1-q^2\big)^{-2}\big((-1)^{-r}q^{-u+(-M-1)r}-1\big)x^{-u-Mr}y^{-v+1-Ms} .
\end{gather*}
Here $r,s,u,v,M\in\mathbb{Z}$, $D=rv-su=2$, $G=\operatorname{gcd}(r,s)=1$; $a_0,a_1,a_2\in\mathbb{C}$, $a_0\ne 0$. This corresponds to $L=2$.

\subsubsection*{$\boldsymbol{D_2G_2E_1F_3}$}
\begin{gather*}
\pi(\mathsf{k})(x)= (-1)^vq^{-s}x ,\qquad\pi(\mathsf{k})(y)=(-1)^{-u}q^ry ,\\
\pi(\mathsf{e})(x)= -c_0^{-1}q^{(u+Mr)(v+Ms)+3}\big(1-q^2\big)^{-2}\big(1-(-1)^vq^{v+(M-1)s}\big)x^{u+1+Mr}y^{v+Ms} ,\\
\pi(\mathsf{e})(y)= -c_0^{-1}q^{(u+Mr)(v+Ms)+3}\big(1-q^2\big)^{-2}\big(q^{u+Mr}-(-1)^{-u}q^r\big)x^{u+Mr}y^{v+1+Ms} ,\\
\pi(\mathsf{f})(x)= c_0\big((-1)^{-v}q^s-q^{-v-Ms}\big)x^{-u+1-Mr}y^{-v-Ms}\\
\hphantom{\pi(\mathsf{f})(x)=}{}+ c_1\big((-1)^{-v}q^s-q^{-v+(-M+1)s}\big)x^{-u+1+(-M+1)r}y^{-v+(-M+1)s}\\
\hphantom{\pi(\mathsf{f})(x)=}{}+ c_2\big((-1)^{-v}q^s-q^{-v+(-M+2)s}\big)x^{-u+1+(-M+2)r}y^{-v+(-M+2)s} ,\\
\pi(\mathsf{f})(y)= c_0\big((-1)^uq^{-u+(-M-1)r}-1\big)x^{-u-Mr}y^{-v+1-Ms}\\
\hphantom{\pi(\mathsf{f})(y)=}{} + c_1\big((-1)^uq^{-u-Mr}-1\big)x^{-u+(-M+1)r}y^{-v+1+(-M+1)s}\\
\hphantom{\pi(\mathsf{f})(y)=}{} + c_2\big((-1)^uq^{-u+(-M+1)r}-1\big)x^{-u+(-M+2)r}y^{-v+1+(-M+2)s} .
\end{gather*}
Here $r,s,u,v,M\in\mathbb{Z}$, $D=rv-su=2$, $G=\operatorname{gcd}(r,s)=2$; $c_0,c_1,c_2\in\mathbb{C}$, $c_0\ne 0$. This corresponds to $L=0$.

\subsubsection*{$\boldsymbol{D_2G_2E_2F_2}$}
\begin{gather*}
\pi(\mathsf{k})(x)= (-1)^vq^{-s}x ,\qquad\pi(\mathsf{k})(y)=(-1)^{-u}q^ry ,\\
\pi(\mathsf{e})(x)= a_0\big(1-(-1)^vq^{v+(M-1)s}\big)x^{u+1+Mr}y^{v+Ms}\\
\hphantom{\pi(\mathsf{e})(x)=}{}+ a_1\big(1-(-1)^vq^{v+Ms}\big)x^{u+1+(M+1)r}y^{v+(M+1)s} ,\\
\pi(\mathsf{e})(y)= a_0\big(q^{u+Mr}-(-1)^{-u}q^r\big)x^{u+Mr}y^{v+1+Ms}\\
\hphantom{\pi(\mathsf{e})(y)=}{} + a_1\big(q^{u+(M+1)r}-(-1)^{-u}q^r\big)x^{u+(M+1)r}y^{v+1+(M+1)s} ,\\
\pi(\mathsf{f})(x)= -a_0^{-1}q^{(u+Mr)(v+Ms)+3}\big(1-q^2\big)^{-2}\big((-1)^{-v}q^s-q^{-v-Ms}\big)x^{-u+1-Mr}y^{-v-Ms}\\
\hphantom{\pi(\mathsf{f})(x)=}{} + c_1\big((-1)^{-v}q^s-q^{-v+(-M+1)s}\big)x^{-u+1+(-M+1)r}y^{-v+(-M+1)s} ,\\
\pi(\mathsf{f})(y)= -a_0^{-1}q^{(u+Mr)(v+Ms)+3}\big(1-q^2\big)^{-2}\big((-1)^uq^{-u+(-M-1)r}-1\big) x^{-u-Mr}y^{-v+1-Ms}\\
\hphantom{\pi(\mathsf{f})(y)=}{} + c_1\big((-1)^uq^{-u-Mr}-1\big)x^{-u+(-M+1)r}y^{-v+1+(-M+1)s} .
\end{gather*}
Here $r,s,u,v,M\in\mathbb{Z}$, $D=rv-su=2$, $G=\operatorname{gcd}(r,s)=2$; $a_0,a_1,c_1\in\mathbb{C}$, $a_0\ne 0$. This corresponds to $L=1$.

\subsubsection*{$\boldsymbol{D_2G_2E_3F_1}$}
\begin{gather*}
\pi(\mathsf{k})(x)= (-1)^vq^{-s}x ,\qquad\pi(\mathsf{k})(y)=(-1)^{-u}q^ry ,\\
\pi(\mathsf{e})(x)= a_0\big(1-(-1)^vq^{v+(M-1)s}\big)x^{u+1+Mr}y^{v+Ms}\\
\hphantom{\pi(\mathsf{e})(x)=}{}+ a_1\big(1-(-1)^vq^{v+Ms}\big)x^{u+1+(M+1)r}y^{v+(M+1)s}\\
\hphantom{\pi(\mathsf{e})(x)=}{}+ a_2\big(1-(-1)^vq^{v+(M+1)s}\big)x^{u+1+(M+2)r}y^{v+(M+2)s} ,\\
\pi(\mathsf{e})(y)= a_0\big(q^{u+Mr}-(-1)^{-u}q^r\big)x^{u+Mr}y^{v+1+Ms}\\
\hphantom{\pi(\mathsf{e})(y)=}{} + a_1\big(q^{u+(M+1)r}-(-1)^{-u}q^r\big)x^{u+(M+1)r}y^{v+1+(M+1)s}\\
\hphantom{\pi(\mathsf{e})(y)=}{} + a_2\big(q^{u+(M+2)r}-(-1)^{-u}q^r\big)x^{u+(M+2)r}y^{v+1+(M+2)s} ,\\
\pi(\mathsf{f})(x)= a_0^{-1}q^{(u+Mr)(v+Ms)+3}\big(1-q^2\big)^{-2}\big((-1)^{-v}q^s-q^{-v-Ms}\big)x^{-u+1-Mr}y^{-v-Ms} ,\\
\pi(\mathsf{f})(y)= a_0^{-1}q^{(u+Mr)(v+Ms)+3}\big(1-q^2\big)^{-2}\big((-1)^uq^{-u+(-M-1)r}-1\big)x^{-u-Mr}y^{-v+1-Ms} .
\end{gather*}
Here $r,s,u,v,M\in\mathbb{Z}$, $D=rv-su=2$, $G=\operatorname{gcd}(r,s)=2$; $a_0,a_1,a_2\in\mathbb{C}$, $a_0\ne 0$. This corresponds to $L=2$.

\subsubsection*{The case $\boldsymbol{D=2}$, $\boldsymbol{N=1}$}

Under the assumption $D=2$, by Lemma~\ref{iD} we have also to consider the case $N=1$. To see that this way no additional symmetries occur, one has to compute coefficients just to write down the corresponding series. We stick here to the case $G=1$, $\alpha=q^{-s}$, $\beta=q^r$; the rest of cases are to be considered in a similar way. No names are given to these series, because they are all embeddable to the above series with $N=2$.
\begin{gather*}
 \mathbf{L}=\mathbf{0}\\
\pi(\mathsf{k})(x)= q^{-s}x ,\qquad\pi(\mathsf{k})(y)=q^ry ,\\
\pi(\mathsf{e})(x)= -c_0^{-1}q^{(u+Mr)(v+Ms)+3}\big(1-q^2\big)^{-2}\big(1-q^{v+(M-1)s}\big)x^{u+1+Mr}y^{v+Ms} ,\\
\pi(\mathsf{e})(y)= -c_0^{-1}q^{(u+Mr)(v+Ms)+3}\big(1-q^2\big)^{-2}\big(q^{u+Mr}-q^r\big)x^{u+Mr}y^{v+1+Ms} ,\\
\pi(\mathsf{f})(x)= c_0\big(q^s-q^{-v-Ms}\big)x^{-u+1-Mr}y^{-v-Ms}\\
\hphantom{\pi(\mathsf{f})(x)=}{} + c_1\big(q^s-q^{-v+(-M+1)s}\big) x^{-u+1+(-M+1)r}y^{-v+(-M+1)s} ,\\
 \pi(\mathsf{f})(y)= c_0\big(q^{-u+(-M-1)r}-1\big)x^{-u-Mr}y^{-v+1-Ms}\\
 \hphantom{\pi(\mathsf{f})(y)=}{} + c_1\big(q^{-u-Mr}-1\big)x^{-u+(-M+1)r}y^{-v+1+(-M+1)s} .
\end{gather*}
Here $r,s,u,v,M\in\mathbb{Z}$, $D=rv-su=2$, $G=\operatorname{gcd}(r,s)=1$; $c_0,c_1\in\mathbb{C}$, $c_0\ne 0$. This is embeddable into $D_2G_1E_1F_3(a)$ by setting there $c_2=0$.
\begin{gather*}
 \mathbf{L}=\mathbf{1}\\
 \pi(\mathsf{k})(x)= q^{-s}x ,\qquad\pi(\mathsf{k})(y)=q^ry ,\\
\pi(\mathsf{e})(x)= a_0\big(1-q^{v+(M-1)s}\big)x^{u+1+Mr}y^{v+Ms}+\\ a_1\big(1-q^{v+Ms}\big)x^{u+1+(M+1)r}y^{v+(M+1)s} ,\\
\pi(\mathsf{e})(y)= a_0\big(q^{u+Mr}-q^r\big)x^{u+Mr}y^{v+1+Ms}+ a_1\big(q^{u+(M+1)r}-q^r\big)x^{u+(M+1)r}y^{v+1+(M+1)s} ,\\
\pi(\mathsf{f})(x)= a_0^{-1}q^{(u+Mr)(v+Ms)+3}\big(1-q^2\big)^{-2}\big(q^s-q^{-v-Ms}\big)x^{-u+1-Mr}y^{-v-Ms} ,\\
\pi(\mathsf{f})(y)= a_0^{-1}q^{(u+Mr)(v+Ms)+3}\big(1-q^2\big)^{-2}\big(q^{-u+(-M-1)r}-1\big)x^{-u-Mr}y^{-v+1-Ms} .
\end{gather*}
Here $r,s,u,v,M\in\mathbb{Z}$, $D=rv-su=2$, $G=\operatorname{gcd}(r,s)=1$; $a_0,a_1\in\mathbb{C}$, $a_0\ne 0$. This is embeddable into $D_2G_1E_3F_1(a)$ by setting there $a_2=0$.

\subsection[The case $D=4$]{The case $\boldsymbol{D=4}$}

With $D=4$, Lemma~\ref{iD} implies that the only possible value for $N$ is $N=1$. Thus, the multiplier(s) $q^{-2+iD}-1$ in \eqref{(ef-fe)x2}, \eqref{(ef-fe)y2} at the extreme monomials corresponding to $i=N=1$ are non-zero. It follows that $\alpha q^s-\alpha^{-1}=\beta^{-1}q^r-\beta=0$, that is, the weight constants are subject to
\begin{gather}\label{sbt}
\alpha^2=q^{-s},\qquad\beta^2=q^r,
\end{gather}
which is a more subtle condition than $\alpha^4=q^{-2s}$, $\beta^4=q^{2r}$ coming from \eqref{abgf}. The existence of solutions of \eqref{wce} compatible to \eqref{sbt} with the minimality property for $(r,s)$ (hence the existence of symmetries) is described by

\begin{Lemma}\label{lem_det4}
Let the integral matrix (of integral parameters of symmetries) $\Phi=\left(\begin{smallmatrix}r & s\\ u & v\end{smallmatrix}\right)$ be such that $D=\det\Phi=4$. Then
\begin{enumerate}\itemsep=0pt
\item[$1.$] Let $G=\operatorname{gcd}(r,s)=1$ and $\zeta$ be a fixed square root of $q$ $(\zeta^2=q)$, then there exist exactly two solutions $\binom{\alpha}{\beta}$ of~\eqref{wce} compatible with~\eqref{sbt}
 \begin{gather*}
 \binom{\zeta^{-s}}{\zeta^r},\qquad\binom{(-1)^s\zeta^{-s}}{(-1)^r\zeta^r}
 \end{gather*}
$($the latter pair is nothing more than the solution corresponding to $-\zeta$, another square root of~$q)$. Hence the existence of symmetries listed below by calculation of coefficients. Any two symmetries corresponding to different pairs of weight constants as above are non-isomorphic.
\item[$2.$] With $G=\operatorname{gcd}(r,s)=2$, the solutions $\binom{\alpha}{\beta}$ of \eqref{wce} compatible with \eqref{sbt} with the minima\-li\-ty property for $(r,s)$ exist if and only if both $u$ and $v$ are even. In this case there exist two such distinct solutions as follows{\samepage
 \begin{gather*}
 \binom{-q^{-s'}}{(-1)^{r'+1}q^{r'}},\qquad \binom{(-1)^{s'+1}q^{-s'}}{-q^{r'}},
 \end{gather*}
 where $r=2r'$, $s=2s'$. Hence the existence of symmetries listed below.}
\item[$3.$] In the case $G=\operatorname{gcd}(r,s)=4$, there exist no solutions $\binom{\alpha}{\beta}$ of \eqref{wce} compatible with \eqref{sbt} with the minimality property for $(r,s)$. Hence no associated symmetries.
\end{enumerate}
\end{Lemma}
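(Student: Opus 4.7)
The plan is to solve the system \eqref{sbt} together with \eqref{wce} in $(\mathbb{C}\setminus\{0\})^2$ and enforce minimality of $(r,s)$, handling the three cases $G\in\{1,2,4\}$ separately. In every case, \eqref{sbt} alone determines $\alpha$ and $\beta$ up to signs $\epsilon_1,\epsilon_2\in\{\pm 1\}$: after fixing square roots $a_0,b_0$ of $q^{-s},q^r$ and writing $\alpha=\epsilon_1 a_0$, $\beta=\epsilon_2 b_0$, only four candidate pairs remain, so the task reduces to a finite sign analysis constrained by \eqref{wce} and by the minimality condition on $(r,s)$.

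For part 1, with $\operatorname{gcd}(r,s)=1$, I would fix $\zeta$ with $\zeta^2=q$ and take $a_0=\zeta^{-s}$, $b_0=\zeta^r$. A direct computation of $\alpha^r\beta^s$ and $\alpha^u\beta^v$ shows that the $\zeta$-exponents cancel exactly because $D=rv-su=4$, so \eqref{wce} reduces to the sign conditions $\epsilon_1^r\epsilon_2^s=1$ and $\epsilon_1^u\epsilon_2^v=1$. A short case split by parities of $(r,s)$, exploiting that $D=4$ forces $u\equiv v\pmod{2}$ when both $r,s$ are odd (and forces the relevant coordinate among $u,v$ to be even in the mixed cases), then shows that exactly two sign pairs survive and that they correspond precisely to replacing $\zeta$ by $\pm\zeta$. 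Minimality of $(r,s)$ is automatic since $\operatorname{gcd}(r,s)=1$. For part 2, I would set $r=2r'$, $s=2s'$ with $\operatorname{gcd}(r',s')=1$ and take $a_0=q^{-s'}$, $b_0=q^{r'}$; then $\alpha^r\beta^s=1$ holds automatically, $\alpha^u\beta^v=q^2$ reduces to $\epsilon_1^u\epsilon_2^v=1$, and minimality (testing the divisor $w=2$ of $\gcd(r,s)$) becomes $\epsilon_1^{r'}\epsilon_2^{s'}=-1$. A parity analysis on $(r',s',u,v)$ constrained by $r'v-s'u=2$ is then expected to give simultaneous solvability iff $u$ and $v$ are both even, and to pin down the two sign pairs to be exactly those written in the lemma. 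For part 3, I would set $r=4r''$, $s=4s''$; \eqref{sbt} yields $\alpha^{2r''}\beta^{2s''}=q^{-4s''r''+4r''s''}=1$ regardless of signs, so the divisor $w=2$ already witnesses failure of minimality and no admissible pair exists.

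Finally, for the non-isomorphism statement in part 1, I would reuse the orbit argument from the proof of Lemma \ref{lem_det2}(1): if some $\sigma=\left(\begin{smallmatrix}k&m\\l&n\end{smallmatrix}\right)\in {\rm SL}(2,\mathbb{Z})$ mapped $\binom{\zeta^{-s}}{\zeta^r}$ to $\binom{(-1)^s\zeta^{-s}}{(-1)^r\zeta^r}$, then squaring both coordinates of the relation eliminates the sign factors and leaves equalities of powers of $q$; since $q$ is not a root of unity, these force the integer identities $-sk+rm=-s$ and $-sl+rn=r$, which reinserted imply $(-1)^s=(-1)^r=1$, contradicting $\operatorname{gcd}(r,s)=1$. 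The main obstacle I anticipate is the parity bookkeeping in part 2: carefully enumerating the parity subcases of $(r',s')$ and reconciling the two sign conditions with the diophantine constraint $r'v-s'u=2$ to establish both the existence criterion $u\equiv v\equiv 0\pmod{2}$ and the explicit identification of the two surviving pairs with those stated.
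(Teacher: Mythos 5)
Your proposal is correct, and its skeleton coincides with the paper's: use \eqref{sbt} to pin down $\binom{\alpha}{\beta}$ up to a pair of signs $(\epsilon_1,\epsilon_2)\in\{\pm1\}^2$, then impose \eqref{wce} and minimality as a finite sign check, and settle non-isomorphism in part~1 by the ${\rm SL}(2,\mathbb{Z})$-orbit argument recycled from Lemma~\ref{lem_det2}(1) (the paper does exactly this, squaring to kill the signs and using that $q$ is not a root of unity). Where you diverge is in how the sign conditions are resolved. In part~1 the paper first exploits the ambiguity $u\mapsto u-tr$, $v\mapsto v-ts$ to normalize $u\equiv v\equiv 0\pmod 4$, after which the pair $\binom{(-1)^s\zeta^{-s}}{(-1)^r\zeta^r}$ is verified immediately and the two rejected pairs are killed by the single observation that $r+rs+s$ is odd for coprime $r,s$; you instead read the needed parities of $u,v$ directly off $rv-su=4$, which avoids the normalization at the cost of a three-way parity split. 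In part~2 the "only if" direction is where the routes differ most: the paper applies the adjugate $\Phi''=\left(\begin{smallmatrix}v & -s'\\ -u & r'\end{smallmatrix}\right)$ to the relation $\Phi'\binom{\epsilon_\alpha}{\epsilon_\beta}=\binom{-1}{1}$ and reads off $\binom{(-1)^v}{(-1)^{-u}}=\binom11$ in one stroke, whereas you propose enumerating parities of $(r',s',u,v)$ subject to $r'v-s'u=2$. I checked that your enumeration does close: when $r',s'$ are both odd the determinant forces $u\equiv v$, and odd $u,v$ would force $\epsilon_1=\epsilon_2$ against minimality's $\epsilon_1\epsilon_2=-1$; in the mixed cases the determinant forces one of $u,v$ even and minimality pins the sign attached to the other, forcing it even too; and in all cases the two surviving sign pairs are exactly $(-1,(-1)^{r'+1})$ and $((-1)^{s'+1},-1)$ as stated. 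So the "main obstacle" you anticipate is real bookkeeping but presents no difficulty; the paper's matrix argument simply packages it more compactly. Part~3 is identical in both treatments.
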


\begin{proof} 1. Let $G=\operatorname{gcd}(r,s)=1$. Since $r$ and $s$ are coprime, there exist $u',v'\in\mathbb{Z}$ such that $v'r-u's=1$. This, together with $rv-su=4$, implies
\begin{gather*}r(v-4v')=s(u-4u').\end{gather*}
One more application of the fact that $r$ and $s$ are coprime allows one to deduce the existence of $t\in\mathbb{Z}$ with $u-4u'=tr$, $v-4v'=ts$. It has been mentioned in Section~\ref{wpem} that (obviously) a replacement of $u$ and $v$ by $u-tr$ and $v-ts$, respectively, does not affect the set of solutions (hence the set of solutions with the minimality property for $(r,s)$) $\binom{\alpha}{\beta}$ of~\eqref{wce}. So we may assume that $u=4u'$, $v=4v'$.

Let us fix a square root $\zeta$ of $q$ ($\zeta^2=q$), and consider the pair $\binom{\alpha}{\beta}=\binom{\zeta^{-s}}{\zeta^r}$. An easy verification shows that this pair is a solution of \eqref{wce} compatible with \eqref{sbt}. Observe that any pair of constants subject to \eqref{sbt} differs from $\binom{\zeta^{-s}}{\zeta^r}$ by changing signs of the components, and there exist exactly $4$ such pairs. Let us present a list of those as follows
\begin{gather*}
\binom{\zeta^{-s}}{\zeta^r},\qquad\binom{(-1)^s\zeta^{-s}}{(-1)^r\zeta^r}, \qquad\binom{-\zeta^{-s}}{(-1)^{r+1}\zeta^r},\qquad
\binom{(-1)^{s+1}\zeta^{-s}}{-\zeta^r}.
\end{gather*}
A routine verification demonstrates that for any coprime $r$, $s$ the elements of this list are pairwise distinct. Another simple calculation shows that the initial two pairs are solutions of~\eqref{wce}, while the latter two pairs are subject to $\alpha^r\beta^s=-1$. The last claim is due to our assumption that $r$, $s$ are coprime, so that $r+rs+s$ cannot be even. That is, only the initial two pairs are solutions of~\eqref{wce}.

To prove that any two symmetries corresponding to different pairs of weight constants $\binom{\zeta^{-s}}{\zeta^r}$, $\binom{(-1)^s\zeta^{-s}}{(-1)^r\zeta^r}$ are non-isomorphic, it suffices to demonstrate that these pairs of constants are not on the same ${\rm SL}(2,\mathbb{Z})$-orbit. To see this, let us observe first that $\zeta$ is obviously not a~root of~$1$, together with~$q$. If one replaces here~$\zeta$ with~$q$, we get the two pairs considered in the proof of Lemma~\ref{lem_det2}. These pairs are proved there to be not on the same ${\rm SL}(2,\mathbb{Z})$-orbit, and the argument used works also in our present case.

2. Let $G=\operatorname{gcd}(r,s)=2$. Then $r=2r'$, $s=2s'$, and (in our context) $r'$ and $s'$ are coprime. Any pair of weight constants $\alpha$, $\beta$ subject to \eqref{sbt} has the form $\alpha=\varepsilon_\alpha q^{-s'}$, $\beta=\varepsilon_\beta q^{r'}$, with $\varepsilon_\alpha,\varepsilon_\beta=\pm 1$.

Suppose that both $u$ and $v$ are even. Then for any choice of $\varepsilon_\alpha$, $\varepsilon_\beta$ one has
\begin{gather*}
\alpha^r\beta^s=\varepsilon_\alpha^r\varepsilon_\beta^sq^{-s'r+r's}=\varepsilon_\alpha^{2r'}\varepsilon_\beta^{2s'}q^{-2s'r'+2r's'}=1,\\
\alpha^u\beta^v=\varepsilon_\alpha^u\varepsilon_\beta^vq^{-s'u+r'v}=q^2.\end{gather*}
This means that $\binom{\alpha}{\beta}$ is a solution of \eqref{wce} for any $\varepsilon_\alpha$, $\varepsilon_\beta$, and one has $4$ such solutions. We need only to distinguish those making $(r,s)$ minimal. For that, we reproduce the idea used in the previous case with $G=1$ in writing down the $4$ pairs of constants as follows
\begin{gather*}
\binom{q^{-s'}}{q^{r'}},\qquad\binom{(-1)^{s'}q^{-s'}}{(-1)^{r'}q^{r'}},\qquad \binom{-q^{-s'}}{(-1)^{r'+1}q^{r'}},\qquad\binom{(-1)^{s'+1}q^{-s'}}{-q^{r'}}.
\end{gather*}
It turns out that, given any coprime $r'$, $s'$ (as in our case), the elements of this list are pairwise distinct, which is a matter of routine verification. Also, the above discussion demonstrates that these pairs of constants are solutions of~\eqref{wce} subject to~\eqref{sbt}.

As for minimality for $(r,s)$, an easy calculation with $\binom{\alpha}{\beta}$ standing for initial two pairs of constants establishes that $\alpha^{r'}\beta^{s'}=1$, thus minimality condition fails. However, a similar simple calculation based on $r'$, $s'$ being coprime in the case of the last two pairs of constants $\binom{\alpha}{\beta}$ shows that $\alpha^{r'}\beta^{s'}=-1$, hence our claim.

Conversely, suppose that there exists a solution $\binom{\alpha}{\beta}$ of \eqref{wce} compatible with \eqref{sbt}, with the pair $(r,s)$ being minimal. In view of the above observations, $\alpha=\varepsilon_\alpha q^{-s'}$, $\beta=\varepsilon_\beta q^{r'}$ for some $\varepsilon_\alpha,\varepsilon_\beta\in\{-1;1\}$, $\alpha^u\beta^v=\varepsilon_\alpha^u\varepsilon_\beta^v q^2$, $\alpha^{r'}\beta^{s'}=\varepsilon_\alpha^{r'}\varepsilon_\beta^{s'}$. Our current assumption on $\alpha$, $\beta$ allows one to conclude that, with $\Phi'=\left(\begin{smallmatrix}r' & s'\\ u & v\end{smallmatrix}\right)$, one has
\begin{gather*}
\binom{-1}{q^2}=\Phi'\binom{\alpha}{\beta}=
\Phi'\binom{\varepsilon_\alpha}{\varepsilon_\beta}\Phi'\binom{q^{-s'}}{q^{r'}}
=\Phi'\binom{\varepsilon_\alpha}{\varepsilon_\beta}\cdot\binom{1}{q^2},
\end{gather*}
whence
\begin{gather}\label{phi_eps}
\Phi'\binom{\varepsilon_\alpha}{\varepsilon_\beta}=\binom{-1}1. \end{gather}

As $\det\Phi'=2$, an application of Cramer's rule produces an integral matrix $\Phi''$ such that $\Phi''\Phi'=\left(\begin{smallmatrix}2 & 0\\ 0 & 2\end{smallmatrix}\right)$. With $\Phi''$ being applied to \eqref{phi_eps}, one has $\left(\begin{smallmatrix}2 & 0\\ 0 & 2\end{smallmatrix}\right) \binom{\varepsilon_\alpha}{\varepsilon_\beta}=\Phi''\binom{-1}1$. This relation, using the explicit form of $\Phi''=\left(\begin{smallmatrix}v & -s'\\ -u & r'\end{smallmatrix}\right)$, becomes $\binom11=\binom{(-1)^v}{(-1)^{-u}}$, which implies that both $u$ and $v$ are even.

3. Let $G=\operatorname{gcd}(r,s)=4$, in particular $r=4r'$, $s=4s'$. Suppose $\binom{\alpha}{\beta}$ is a solution of~\eqref{wce} compatible with~\eqref{sbt}. Then one has
\begin{gather*}\alpha^{2r'}\beta^{2s'}=q^{-sr'+rs'}=q^{4(-s'r'+r's')}=1,\end{gather*}
which implies that the pair $(r,s)$ is not minimal.
\end{proof}

\begin{Remark}It should be noted that in Lemma~\ref{lem_det4}(2), the choice between the two distinct pairs of weight constants in general does not lead to non-isomorphic symmetries, as it was the case in Lemmas~\ref{lem_det2}(1) and~\ref{lem_det4}(1). For example, substituting there $r'=0$, $s'=1$ one observes that the distinct pairs of weight constants mentioned in the statement of (2) are on the same ${\rm SL}(2,\mathbb{Z})$-orbit:
\begin{gather*}
\begin{pmatrix}1 & 1\\ 0 & 1\end{pmatrix}\binom{-q^{-1}}{-1}=\binom{q^{-1}}{-1}.
\end{gather*}
Hence an application of the isomorphism $\varphi_{\sigma,1,1}$ with $\sigma= \left(\begin{smallmatrix}1 & 1\\ 0 & 1\end{smallmatrix}\right)$ intertwines the associated symmetries.
\end{Remark}

Here is the final list of non-generic symmetries with $D=4$, coming from adjusting the coefficients in \eqref{ex6}--\eqref{fy6} via applying \eqref{effe} together with \eqref{(ef-fe)x2}--\eqref{(ef-fe)y2}.

The initial $4$ series assume a square root $\zeta$ of $q$ ($\zeta^2=q$) being fixed.

\subsubsection*{$\boldsymbol{D_4G_1E_1F_2(a)}$}
\begin{gather*}
\pi(\mathsf{k})(x)= \zeta^{-s}x ,\qquad\pi(\mathsf{k})(y)=\zeta^ry ,\\
\pi(\mathsf{e})(x)= -c_0^{-1}q^{(u+Mr)(v+Ms)+3}\big(1-q^2\big)^{-2}\big(1-\zeta^{-s}q^{v+Ms}\big)x^{u+1+Mr}y^{v+Ms} ,\\
\pi(\mathsf{e})(y)= -c_0^{-1}q^{(u+Mr)(v+Ms)+3}\big(1-q^2\big)^{-2}\big(q^{u+Mr}-\zeta^r\big)x^{u+Mr}y^{v+1+Ms} ,\\
\pi(\mathsf{f})(x)= c_0\big(\zeta^s-q^{-v-Ms}\big)x^{-u+1-Mr}y^{-v-Ms}\\
\hphantom{\pi(\mathsf{f})(x)=}{} + c_1\big(\zeta^s-q^{-v+(-M+1)s}\big)x^{-u+1+(-M+1)r}y^{-v+(-M+1)s} ,\\
\pi(\mathsf{f})(y)= c_0\big(\zeta^{-r}q^{-u-Mr}-1\big)x^{-u-Mr}y^{-v+1-Ms}\\
\hphantom{\pi(\mathsf{f})(y)=}{} + c_1\big(\zeta^{-r}q^{-u+(-M+1)r}-1\big)x^{-u+(-M+1)r}y^{-v+1+(-M+1)s} .
\end{gather*}
Here $r,s,u,v,M\in\mathbb{Z}$, $D=rv-su=4$; $G=\operatorname{gcd}(r,s)=1$; $c_0,c_1\in\mathbb{C}$, $c_0\ne 0$. This corresponds to $L=0$.

\subsubsection*{$\boldsymbol{D_4G_1E_2F_1(a)}$}
\begin{gather*}
\pi(\mathsf{k})(x)= \zeta^{-s}x ,\qquad\pi(\mathsf{k})(y)=\zeta^ry ,\\
\pi(\mathsf{e})(x)= a_0\big(1-\zeta^{-s}q^{v+Ms}\big)x^{u+1+Mr}y^{v+Ms}\\
\hphantom{\pi(\mathsf{e})(x)=}{}+ a_1\big(1-\zeta^{-s}q^{v+(M+1)s}\big)x^{u+1+(M+1)r}y^{v+(M+1)s} ,\\
\pi(\mathsf{e})(y)= a_0\big(q^{u+Mr}-\zeta^r\big)x^{u+Mr}y^{v+1+Ms}+ a_1\big(q^{u+(M+1)r}-\zeta^r\big)x^{u+(M+1)r}y^{v+1+(M+1)s} ,\\
\pi(\mathsf{f})(x)= a_0^{-1}q^{(u+Mr)(v+Ms)+3}\big(1-q^2\big)^{-2}\big(\zeta^s-q^{-v-Ms}\big)x^{-u+1-Mr}y^{-v-Ms} ,\\
\pi(\mathsf{f})(y)= a_0^{-1}q^{(u+Mr)(v+Ms)+3}\big(1-q^2\big)^{-2}\big(\zeta^{-r}q^{-u-Mr}-1\big)x^{-u-Mr}y^{-v+1-Ms} .
\end{gather*}
Here $r,s,u,v,M\in\mathbb{Z}$, $D=rv-su=4$; $G=\operatorname{gcd}(r,s)=1$; $a_0,a_1\in\mathbb{C}$, $a_0\ne 0$. This corresponds to $L=1$.

\subsubsection*{$\boldsymbol{D_4G_1E_1F_2(b)}$}
\begin{gather*}
\pi(\mathsf{k})(x)= (-1)^s\zeta^{-s}x ,\qquad\pi(\mathsf{k})(y)=(-1)^r\zeta^ry ,\\
\pi(\mathsf{e})(x)= -c_0^{-1}q^{(u+Mr)(v+Ms)+3}\big(1-q^2\big)^{-2}\big(1-(-1)^s\zeta^{-s}q^{v+Ms}\big)x^{u+1+Mr}y^{v+Ms} ,\\
\pi(\mathsf{e})(y)= -c_0^{-1}q^{(u+Mr)(v+Ms)+3}\big(1-q^2\big)^{-2}\big(q^{u+Mr}-(-1)^r\zeta^r\big)x^{u+Mr}y^{v+1+Ms} ,\\
\pi(\mathsf{f})(x)= c_0\big((-1)^s\zeta^s-q^{-v-Ms}\big)x^{-u+1-Mr}y^{-v-Ms}\\
\hphantom{\pi(\mathsf{f})(x)=}{}+ c_1\big((-1)^s\zeta^s-q^{-v+(-M+1)s}\big)x^{-u+1+(-M+1)r}y^{-v+(-M+1)s} ,\\
\pi(\mathsf{f})(y)= c_0\big((-1)^r\zeta^{-r}q^{-u-Mr}-1\big)x^{-u-Mr}y^{-v+1-Ms}\\
\hphantom{\pi(\mathsf{f})(y)=}{} + c_1\big((-1)^r\zeta^{-r}q^{-u+(-M+1)r}-1\big)x^{-u+(-M+1)r}y^{-v+1+(-M+1)s} .
\end{gather*}
Here $r,s,u,v,M\in\mathbb{Z}$, $D=rv-su=4$; $G=\operatorname{gcd}(r,s)=1$; $c_0,c_1\in\mathbb{C}$, $c_0\ne 0$. This corresponds to $L=0$.

\subsubsection*{$\boldsymbol{D_4G_1E_2F_1(b)}$}
\begin{gather*}
\pi(\mathsf{k})(x)= (-1)^s\zeta^{-s}x ,\qquad\pi(\mathsf{k})(y)=(-1)^r\zeta^ry ,\\
\pi(\mathsf{e})(x)= a_0\big(1-(-1)^s\zeta^{-s}q^{v+Ms}\big)x^{u+1+Mr}y^{v+Ms}\\
\hphantom{\pi(\mathsf{e})(x)=}{} + a_1\big(1-(-1)^s\zeta^{-s}q^{v+(M+1)s}\big)x^{u+1+(M+1)r}y^{v+(M+1)s} ,\\
\pi(\mathsf{e})(y)= a_0\big(q^{u+Mr}-(-1)^r\zeta^r\big)x^{u+Mr}y^{v+1+Ms}\\
\hphantom{\pi(\mathsf{e})(y)=}{} + a_1\big(q^{u+(M+1)r}-(-1)^r\zeta^r\big)x^{u+(M+1)r}y^{v+1+(M+1)s} ,\\
\pi(\mathsf{f})(x)= a_0^{-1}q^{(u+Mr)(v+Ms)+3}\big(1-q^2\big)^{-2}\big((-1)^s\zeta^s-q^{-v-Ms}\big)x^{-u+1-Mr}y^{-v-Ms} ,\\
\pi(\mathsf{f})(y)= a_0^{-1}q^{(u+Mr)(v+Ms)+3}\big(1-q^2\big)^{-2}\big((-1)^r\zeta^{-r}q^{-u-Mr}-1\big)x^{-u-Mr}y^{-v+1-Ms} .
\end{gather*}
Here $r,s,u,v,M\in\mathbb{Z}$, $D=rv-su=4$; $G=\operatorname{gcd}(r,s)=1$; $a_0,a_1\in\mathbb{C}$, $a_0\ne 0$. This corresponds to $L=1$.

The final $4$ series assume even integral parameters $r=2r'$, $s=2s'$.

\subsubsection*{$\boldsymbol{D_4G_2E_1F_2(a)}$}
\begin{gather*}
\pi(\mathsf{k})(x)= -q^{-s'}x ,\qquad\pi(\mathsf{k})(y)=(-1)^{r'+1}q^{r'}y ,\\
\pi(\mathsf{e})(x)= -c_0^{-1}q^{(u+Mr)(v+Ms)+3}\big(1-q^2\big)^{-2}\big(1+q^{-s'+v+Ms}\big)x^{u+1+Mr}y^{v+Ms} ,\\
\pi(\mathsf{e})(y)= -c_0^{-1}q^{(u+Mr)(v+Ms)+3}\big(1-q^2\big)^{-2}\big(q^{u+Mr}+(-1)^{r'}q^{r'}\big)x^{u+Mr}y^{v+1+Ms} ,\\
\pi(\mathsf{f})(x)= c_0\big({-}q^{s'}-q^{-v-Ms}\big)x^{-u+1-Mr}y^{-v-Ms}\\
\hphantom{\pi(\mathsf{f})(x)=}{} + c_1\big({-}q^{s'}-q^{-v+(-M+1)s}\big)x^{-u+1+(-M+1)r}y^{-v+(-M+1)s} ,\\
\pi(\mathsf{f})(y)= c_0\big((-1)^{r'+1}q^{-r'-u-Mr}-1\big)x^{-u-Mr}y^{-v+1-Ms}\\
\hphantom{\pi(\mathsf{f})(y)=}{} + c_1\big((-1)^{r'+1}q^{-r'-u+(-M+1)r}-1\big)x^{-u+(-M+1)r}y^{-v+1+(-M+1)s} .
\end{gather*}
Here $r,s,u,v,M\in\mathbb{Z}$, $D=rv-su=4$; $G=\operatorname{gcd}(r,s)=2$; $r=2r'$, $s=2s'$; $c_0,c_1\in\mathbb{C}$, $c_0\ne 0$. This corresponds to $L=0$.

\subsubsection*{$\boldsymbol{D_4G_2E_2F_1(a)}$}
\begin{gather*}
\pi(\mathsf{k})(x)= -q^{-s'}x ,\qquad\pi(\mathsf{k})(y)=(-1)^{r'+1}q^{r'}y ,\\
\pi(\mathsf{e})(x)= a_0\big(1+q^{-s'+v+Ms}\big)x^{u+1+Mr}y^{v+Ms}\\
\hphantom{\pi(\mathsf{e})(x)=}{}+ a_1\big(1+q^{-s'+v+(M+1)s}\big)x^{u+1+(M+1)r}y^{v+(M+1)s} ,\\
\pi(\mathsf{e})(y)= a_0\big(q^{u+Mr}+(-1)^{r'}q^{r'}\big)x^{u+Mr}y^{v+1+Ms}\\
\hphantom{\pi(\mathsf{e})(y)=}{}+ a_1\big(q^{u+(M+1)r}+(-1)^{r'}q^{r'}\big)x^{u+(M+1)r}y^{v+1+(M+1)s} ,\\
\pi(\mathsf{f})(x)= a_0^{-1}q^{(u+Mr)(v+Ms)+3}\big(1-q^2\big)^{-2}\big({-}q^{s'}-q^{-v-Ms}\big)x^{-u+1-Mr}y^{-v-Ms} ,\\
\pi(\mathsf{f})(y)= a_0^{-1}q^{(u+Mr)(v+Ms)+3}\big(1-q^2\big)^{-2}\big((-1)^{r'+1}q^{-r'-u-Mr}-1\big)x^{-u-Mr}y^{-v+1-Ms} .
\end{gather*}
Here $r,s,u,v,M\in\mathbb{Z}$, $D=rv-su=4$; $G=\operatorname{gcd}(r,s)=2$; $r=2r'$, $s=2s'$; $a_0,a_1\in\mathbb{C}$, $a_0\ne 0$. This corresponds to $L=1$.

\subsubsection*{$\boldsymbol{D_4G_2E_1F_2(b)}$}
\begin{gather*}
\pi(\mathsf{k})(x)= (-1)^{s'+1}q^{-s'}x ,\qquad\pi(\mathsf{k})(y)=-q^{r'}y ,\\
\pi(\mathsf{e})(x)= -c_0^{-1}q^{(u+Mr)(v+Ms)+3}\big(1-q^2\big)^{-2}\big(1+(-1)^{s'}q^{-s'+v+Ms}\big)x^{u+1+Mr}y^{v+Ms} ,\\
\pi(\mathsf{e})(y)= -c_0^{-1}q^{(u+Mr)(v+Ms)+3}\big(1-q^2\big)^{-2}\big(q^{u+Mr}+q^{r'}\big)x^{u+Mr}y^{v+1+Ms} ,\\
\pi(\mathsf{f})(x)= c_0\big((-1)^{s'+1}q^{s'}-q^{-v-Ms}\big)x^{-u+1-Mr}y^{-v-Ms}\\
\hphantom{\pi(\mathsf{f})(x)=}{} + c_1\big((-1)^{s'+1}q^{s'}-q^{-v+(-M+1)s}\big)x^{-u+1+(-M+1)r}y^{-v+(-M+1)s} ,\\
\pi(\mathsf{f})(y)= c_0\big({-}q^{-r'-u-Mr}-1\big)x^{-u-Mr}y^{-v+1-Ms}\\
\hphantom{\pi(\mathsf{f})(y)=}{} + c_1\big({-}q^{-r'-u+(-M+1)r}-1\big)x^{-u+(-M+1)r}y^{-v+1+(-M+1)s} .
\end{gather*}
Here $r,s,u,v,M\in\mathbb{Z}$, $D=rv-su=4$; $G=\operatorname{gcd}(r,s)=2$; $r=2r'$, $s=2s'$; $c_0,c_1\in\mathbb{C}$, $c_0\ne 0$. This corresponds to $L=0$.

\subsubsection*{$\boldsymbol{D_4G_2E_2F_1(b)}$}
\begin{gather*}
\pi(\mathsf{k})(x)= (-1)^{s'+1}q^{-s'}x ,\qquad\pi(\mathsf{k})(y)=-q^{r'}y ,\\
\pi(\mathsf{e})(x)= a_0\big(1+(-1)^{s'}q^{-s'+v+Ms}\big)x^{u+1+Mr}y^{v+Ms}\\
\hphantom{\pi(\mathsf{e})(x)=}{} + a_1\big(1+(-1)^{s'}q^{-s'+v+(M+1)s}\big)x^{u+1+(M+1)r}y^{v+(M+1)s} ,\\
\pi(\mathsf{e})(y)= a_0\big(q^{u+Mr}+q^{r'}\big)x^{u+Mr}y^{v+1+Ms}\\
\hphantom{\pi(\mathsf{e})(y)=}{}+ a_1\big(q^{u+(M+1)r}+q^{r'}\big)x^{u+(M+1)r}y^{v+1+(M+1)s} ,\\
 \pi(\mathsf{f})(x)= a_0^{-1}q^{(u+Mr)(v+Ms)+3}\big(1-q^2\big)^{-2}\big((-1)^{s'+1}q^{-s'}-q^{-v-Ms}\big)x^{-u+1-Mr}y^{-v-Ms} ,\\
\pi(\mathsf{f})(y)= a_0^{-1}q^{(u+Mr)(v+Ms)+3}\big(1-q^2\big)^{-2}\big({-}q^{r'-u-Mr}-1\big)x^{-u-Mr}y^{-v+1-Ms} .
\end{gather*}
Here $r,s,u,v,M\in\mathbb{Z}$, $D=rv-su=4$; $G=\operatorname{gcd}(r,s)=2$; $r=2r'$, $s=2s'$; $a_0,a_1\in\mathbb{C}$, $a_0\ne 0$. This corresponds to $L=1$.

We conclude our list of symmetries with the following

\begin{proof}[\bf Proof of Main Theorem] The completeness of list of symmetries with $\sigma\ne I$ and the list of generic symmetries (those with the weight constants being subject to the assumptions of Theorem~\ref{gener}) has been established in~\cite{S}.

As for the non-generic symmetries listed in this Section, these are determined by setting the action of generators of $U_q(\mathfrak{sl}_2)$ on the generators of $\mathbb{C}_q\big[x^{\pm 1},y^{\pm 1}\big]$. To see that such an action extends to a well-defined $U_q(\mathfrak{sl}_2)$-symmetry on $\mathbb{C}_q\big[x^{\pm 1},y^{\pm 1}\big]$, one needs only to verify that everything passes through the relations in $U_q(\mathfrak{sl}_2)$ and in $\mathbb{C}_q\big[x^{\pm 1},y^{\pm 1}\big]$. This is a matter of routine calculations.

To see that the list of non-generic symmetries is complete, one has to observe that our exposition first separates out all the admissible collections of parameters for such symmetries (Section~\ref{wpem}), and then exhaust these collections in writing down the associated symmetries in Section~\ref{clngs}.
\end{proof}

\subsection*{Acknowledgements}

The author would like to thank the anonymous referees for a large number of comments and suggestions that substantially improved the initial version of this paper.

\pdfbookmark[1]{References}{ref}
\LastPageEnding

\end{document}